\newcommand{\NN}{\mathbb{N}}
\newcommand{\RR}{\mathbb{R}}
\newcommand{\CC}{\mathbb{C}}
\newcommand{\TT}{\mathbb{T}}
\newcommand{\ZZ}{\mathbb{Z}}
\newcommand{\GG}{\mathbb{G}}
\newcommand{\norm}[1]{\lVert#1\rVert}
\newcommand{\abs}[1]{\lvert#1\rvert}
\newtheorem{theorem}{Theorem}[section]
\newtheorem{corollary}[theorem]{Corollary}
\newtheorem{lemma}[theorem]{Lemma}
\theoremstyle{definition}
\newtheorem{definition}[theorem]{Definition}
\newtheorem{remark}[theorem]{Remark}
\newtheorem{fact}[theorem]{Fact}
\numberwithin{equation}{section}
\DeclareMathOperator{\Ad}{Ad}
\begin{document}
\title[Cocycle rigidity of abelian partially hyperbolic actions]
{Cocycle rigidity of abelian partially hyperbolic actions}
\thanks{\em {Key words and phrases}: Higher rank Abelian group actions, cocycle rigidity,  partially hyperbolic dynamical systems.}
\author[]{ Zhenqi Jenny Wang$^1$ }
\address{Department of Mathematics\\
        Michigan State University\\
        East Lansing, MI 48824,USA}
\email{wangzq@math.msu.edu}

\thanks{ $^1)$ Based on research  supported by NSF grant DMS-1346876}

\begin{abstract} Suppose $G$ is a higher-rank connected  semisimple Lie group with finite center and without compact factors. Let $\mathbb{G}=G$ or $\mathbb{G}=G\ltimes V$, where $V$ is a finite dimensional vector space $V$.  For any unitary representation $(\pi,\mathcal{H})$ of $\GG$, we study the twisted cohomological
equation $\pi(a)f-\lambda f=g$ for partially hyperbolic element $a\in \mathbb{G}$ and $\lambda\in U(1)$, as well as the twisted cocycle equation $\pi(a_1)f-\lambda_1f=\pi(a_2)g-\lambda_2 g$ for commuting partially hyperbolic elements $a_1,\,a_2\in \mathbb{G}$. We characterize the obstructions to solving these equations, construct
smooth solutions and obtain tame Sobolev estimates for the solutions. These results can be extended to partially hyperbolic flows parallelly.

As an application, we prove cocycle rigidity  for any abelian higher-rank partially hyperbolic algebraic actions. This is the first paper exploring rigidity properties of partially hyperbolic that the hyperbolic directions
don't generate the whole tangent space. The result can be viewed as a first step toward the application of KAM method in obtaining differential rigidity for these actions  in future works.
\end{abstract}

%\date{today}

\maketitle
\section{Introduction}
\subsection{Various abelian algebraic actions} \label{sec:3} We define $\ZZ^k\times \RR^\ell$, $k+\ell\geq 1$ algebraic actions as follows. Let $H$ be a
connected Lie group, $A\subseteq H$ a closed abelian subgroup which
is isomorphic to $\ZZ^k\times \RR^\ell$, $L$ a compact subgroup of
the centralizer $Z(A)$ of $A$, and $\Gamma$ a torsion free lattice in
$H$. Then $A$ acts by left translation on the compact space
$\mathcal{M}=L\backslash H/\Gamma$. Denote this action by $\alpha_A$. The three specific
types of examples discussed below correspond to:
\begin{itemize}
  \item  for the symmetric space examples take $H$ a semisimple Lie group of the non-compact
type.

\smallskip
  \item  for the twisted symmetric space examples take $H=G\ltimes _\rho\RR^m$ or $H=G\ltimes_\rho N$, a semidirect
product of a reductive Lie group $G$ with semisimple factor of the non-compact
type with $\RR^m$ or a simply connected nilpotent group $N$.

\smallskip
  \item for the parabolic action examples, take $H$ a semisimple Lie group of the non-compact
type and $A$ a subgroup of a maximal abelian unipotent subgroup in $H$.
\end{itemize}

\subsection{History and method}In contrast to the classical rank-one actions where Livsic showed that
there is an infinite-dimensional space of obstructions to solving the cohomological
equation for a hyperbolic action by $\RR$ or $\ZZ$, in the past two decades various rigidity phenomena for (partially) hyperbolic actions have been well understood. Significant progresses have been made in the case of cocycle rigidity for higher rank (partially) hyperbolic algebraic actions (see \cite{Damjanovic1}, \cite{Kononenko}, \cite{Spatzier1}, \cite{Spatzier2} and \cite{Zhenqi0}) obtained from symmetric and twisted symmetric space examples. In these papers, the higher rank property is used to show the existence of a distributional or continuous transfer function; then the smoothness of the transfer function
follows from the fact that it is smooth along stable and unstable directions
and that those generate the tangent space at every point. Hence all actions considered in previous papers satisfy the following property which is essential for obtaining smooth rigidity:

\medskip
\noindent ($\mathfrak B$) {\em The stable directions of various  action elements   generate  the tangent space as a Lie algebra.}
\medskip

In \cite{Spatzier1} and \cite{Spatzier2} the proofs are based on harmonic analysis of
semisimple Lie groups, specifically, on exponential decay of matrix coefficients of partially hyperbolic elements. In  \cite{Damjanovic1}, \cite{Kononenko} and \cite{Zhenqi0} the main geometric ingredient is the accessibility of stable and unstable foliations, which enables the construction of continuous transfer function. The natural difficulty in extending the rigidity results to general partially hyperbolic algebraic actions comes from three aspects:
firstly,  how to obtain exponential decay of matrix coefficients in general twisted spaces. The method used in
\cite{Spatzier1} requires that individual acting element acts ergodicly on the torus bundle. But this condition fails once $0$ weight appears. Secondly, for general partially hyperbolic actions, the stable and unstable foliations are no longer accessible. This means geometric method (the method in \cite{Damjanovic1}, \cite{Kononenko} and \cite{Zhenqi0}) can't be adapted to general cases. Thirdly, the smoothness of the solution to the cohomological equation followed from subelliptic
regularity theorem.  But this comes with three disadvantages: firstly, this requires that that the actions taken into account should satisfy condition ($\mathfrak B$); secondly, the solution of the cohomological equation loses at least half of regularity. Tame estimates (finite loss of regularity) for the solution  is important in dynamics, since it is closely related to obtain smooth action rigidity in dynamics, see \cite{Damjanovic2} and \cite{Damjanovic4}; thirdly, subelliptic
regularity theorem fails  for general Hilbert
spaces. For example, the methods in previous papers all fail if projection of the acting group to one simple factor of the semisimple part is trivial.

In this paper, we study the cohomological equation for general partially hyperbolic acting elements  and build up cocycle rigidity results for general higher-rank partially hyperbolic algebraic actions. We characterize the obstructions to solving the (twisted) cohomological equation, construct smooth solution and obtain the tame Sobolev estimates for the solution, i.e, there is finite loss of
regularity (with respect to Sobolev norms) between the coboundary and the
solution. As an application, we prove
the smooth (twisted) cocycle rigidity for any higher rank partially hyperbolic actions over $\GG$.
To prove these results, we introduce new ingredients from representation theory and obtain more elaborate information about estimates in
neutral directions. These results are of independent interest and have wide
applicability.

\subsection{Motivation} So far an effective approach to local differentiable rigidity is the ``geometric" method first introduced in \cite{Damjanovic1,Damjanovic2} to prove local smooth rigidity for generic restrictions in $SL(n,\RR)/\Gamma$ and $SL(n,\CC)/\Gamma$. This approach is based
on geometry and combinatorics of invariant foliations and using insights from
algebraic $K$-theory as an essential tool. The approach was further employed
in \cite{Zhenqi1}, \cite{Zhenqi2}, \cite{Zhenqi23} and \cite{Zhenqi0} for
extending cocycle rigidity and differentiable
 rigidity to most higher rank actions for symmetric space and twisted symmetric space examples satisfying the following
genuinely higher rank condition: the projection of the acting group to each
simple factor of the semisimple part contains a $\ZZ^2$ subgroup. The genuinely higher rank condition is necessary for
the application of geometric method. In many situations of interest, however this
condition is not present (for example, for the homogeneous space $SL(2,k)^n/\Gamma$, where $k=\RR$ or $\CC$).

One important application of the results in the present paper is that these results open a prospect of
proving a version of \emph{local differentiable rigidity} for general partially hyperbolic
actions. This should work as follows: by linearization of the conjugacy equation, we
get the corresponding linearized equation:
\begin{align}\label{for:1}
 \text{Ad}(\alpha)\Omega-\Omega\circ\alpha=R
\end{align}
where $\alpha$ is an $A$-algebraic action (the unperturbed action) and $R$ is the error between $\alpha$ and its
perturbation $\tilde{\alpha}$. If $\Omega$ is a solution for the linearized equation, or at least an approximate
solution, i.e., it solves the above equation with a small error with respect to $R$, then one may expect that the new
perturbation $\tilde{\alpha}^{(1)}$ defined by
 is much closer to $\alpha$ than $\tilde{\alpha}$. Carrying out the iteration process, one
may produce a smooth conjugacy between $\alpha$ and $\tilde{\alpha}$.

This method first appeared in \cite{Damjanovic4} to prove the differentiable rigidity of partially hyperbolic  but not  hyperbolic actions on torus. A  scheme similar to that of \cite{Damjanovic4} applies to  certain
parabolic cases, i.e.  homogeneous actions of unipotent   abelian groups in \cite{DK-parabolic}. To carry out the above scheme, the first task is to precisely describe the solution to the equation \eqref{for:1}, which is studied in Section \ref{sec:4}. Note that during the iteration process, the acting groups are not fixed, but vary in a small neighbourhood. So we need to obtain uniform estimates for these actions. Hence the results in the present paper are essential for successful application of the scheme to general partially hyperbolic actions in the future work, see \cite{Zhenqi4}.

\section{Background, definition, and statement of results}
\subsection{Preliminaries on cocycles} Let $\alpha:A\times
\mathcal{M}\rightarrow \mathcal{M}$ be an action of a topological group $A$ on a (compact)
manifold $\mathcal{M}$ by diffeomorphisms. For a topological group
$(Y,\ast)$ and a homomorphism $\psi:A\rightarrow \textrm{Aut}(Y)$, {\em a ($\psi$-twisted)-cocycle} over $\alpha$
is a continuous function $\beta : A\times E\rightarrow Y$
satisfying:
\begin{align}
\beta(ab, x) = \beta(a, \alpha(b, x))\ast\psi(a)\beta(b, x)
\end{align}
 for any $a, b \in A$. A (twisted) cocycle is
{\em cohomologous to a constant twisted cocycle} (cocycle not
depending on $x$) if there exists a constant (twisted) cocycle $s : A\rightarrow Y$ and a
continuous transfer map $H : \mathcal{M}\rightarrow Y$ such that for all $a
\in A $
\begin{align}\label{for:8}
 \beta(a, x) = H(\alpha(a, x))\ast s(a)\ast(\psi(a)H(x))^{-1}
\end{align}
\eqref{for:8} is called the cohomology equation.

In particular, a cocycle is a {\em coboundary} if it is cohomologous
to the trivial cocycle $\pi(a) = id_Y$, $a \in A$, i.e. if for all
$a \in A$ the following equation holds:
\begin{align}
 \beta(a, x) = H(\alpha(a, x))\ast (\psi(a)H(x))^{-1}.
\end{align}
For more detailed information on cocycles adapted to the present setting
see \cite{Damjanovic1} and \cite{Katok}.

In this paper we will only consider smooth $\CC^k$-valued cocycles over
algebraic partially hyperbolic actions on smooth manifolds. By taking component functions we may always assume that $\beta$ is valued on $\CC$. Further, by taking real and imaginary parts, we can extend the results for real valued cocycles as well.
Adapted to the settings in this paper, $A$ is isomorphic to $\ZZ^k$ or $\RR^k$ and the space $X=\GG/\Gamma$, if $\GG=G$
and $X=\GG/\Gamma\ltimes\ZZ^N$ if $\GG=G\ltimes\RR^N$, where $\Gamma$ is an irreducible torsion free lattice in $G$. A cocycle is called \emph{$\mathcal{H}^r$} if the map $\beta(a,\cdot)\in\mathcal{H}^r(L^2(\GG/\Gamma))$ for any $a\in A$, where $\mathcal{H}^r(L^2(\GG/\Gamma))$ is Sobolev space of order $r$ for the left regular representation of $\GG$ on $L^2(\GG/\Gamma)$. We can also define $\beta$ to be of class $C^r$.  We
also note that if the cocycle $\beta$ is cohomologous to a constant cocycle,
then the constant cocycle is given by $s(a)=\int_{\GG/\Gamma}\beta(a,x)dx.$

In what follows, $C$ will denote any constant that depends only
  on the given  group $\GG$. $C_{x,y,z,\cdots}$ will denote any constant that in addition to the
above depends also on parameters $x, y, z,\cdots$.

\subsection{Statement of the results}\label{sec:5} In this paper, $G$ denotes a higher-rank connected  semisimple Lie group with finite center and without compact factors.
Fix a maximal compact subgroup $K$ of $G$ and a right invariant, bi-$K$-invariant metric $d$ on $G$. Let $A_0$ be the $\RR$-split Cartan subgroup of $G$ admitting the Cartan decomposition $G=KA_0K$.

For a finite dimensional vector space $V$, a continuous representation $\rho: G\to GL(V )$ is called \emph{excellent} if
$\rho(G_i)$-fixed points in $V$ are $\{0\}$ for each simple factor $G_i$ of $G$. Set $\GG=G$ or $G\ltimes_\rho V$, where $\rho$ is excellent.
The
multiplication of elements in $G\ltimes_\rho V$ is defined by
\begin{align}\label{for:10}
(g_1,r_1)\cdot(g_2,r_2)=(g_1g_2,\rho(g_2^{-1})r_1+r_2).
\end{align}
For any $s\in G\ltimes_\rho V$, we have the decomposition $s=(g_s, v_s)$, where $g_s\in G$ and $v_s\in V$.  Then for any $s=(g_s, v_s)\in \GG$, we have
\begin{align}
 \text{Ad}(s)(X)&=\text{Ad}(g_s)(X)-\rho(g_s)d\rho(X)v_s,\quad \text{and}\notag\\
 \text{Ad}(s)(v)&=\rho(g_s)v.\label{for:19}
\end{align}
for any $X\in \text{Lie}(G)$ and $v\in V$.

Set $d_1\big((g,v),\,e\big)=d(g,e)+d_0(v,e)$, where $d_0$ is a metric on $V$. Fix an inner product on $\mathfrak{G}=\text{Lie}(\GG)$ (determined by $d$ or $d_1$). Let $\mathfrak{G}^1$ be the set of unit vectors in $\mathfrak{G}$.
\begin{definition}
An algebraic flow $\phi_{\RR}$ on $\GG$ (resp. an element $a\in \GG$ ) is called \emph{partially hyperbolic} if the spectrum of the group Ad$(\phi_t)$ (resp. Ad$(a)$) acting
on $\mathfrak{G}$ is not contained in $U(1)$.
\end{definition}
Our first two results characterize the obstructions to solving the cohomological
equation and obtain Sobolev estimates for the solution. The next theorem  shows that the $(\pi(a)-\lambda I)$-invariant distributions are the only obstructions to solving
the cohomological equation $\pi(a)f-\lambda f=h$ where $a\in \GG$ is partially hyperbolic and $\lambda\in U(1)$.
\begin{theorem}\label{th:6}
Suppose $(\pi,\,\mathcal{H})$ is a unitary representation  of $\GG$ such that the restriction of $\pi$ to any
simple factor of $G$ is isolated from the trivial representation (in the Fell topology) if $\GG=G$; or $\pi$ contains no non-trivial $V$-fixed vectors if $\GG=G\ltimes_\rho V$.  For any partially hyperbolic element $a$ and $\lambda\in U(1)$,
\begin{enumerate}
\item \label{for:3}if $f\in \mathcal{H}^1$ is a solution of the equation: $\pi(a)f-\lambda f=h$, then $\sum_{j=-\infty}^{+\infty}\lambda^{-(j+1)} \pi(a^j)h=0$ as a distribution.

\medskip
  \item \label{for:33} there exist constants $m_0>\sigma>1$ (only depending on $\GG$) such that for any $m\geq m_0$, there exists $\delta(m)>0$, such that for any $b\in \GG$ with $d(a,b)<\delta$, if $h\in \mathcal{H}^m$ satisfying $\sum_{j=-\infty}^{+\infty}\lambda^{-(j+1)} \pi(b^j)h=0$ as a distribution, then the equation \begin{align}\label{for:2}
       \pi(b)f-\lambda f=h
      \end{align}
      has a solution $f\in\mathcal{H}^{m-\sigma}$ and the following
estimate holds
\begin{align*}
\norm{f}_{m-\sigma}\leq C_{m,a} \norm{h}_{m}.
\end{align*}

 \item \label{for:35}if $h\in \mathcal{H}^\infty$ and $\mathcal{D}(h)=0$ for any $(\pi(a)-\lambda I)$-invariant distribution $\mathcal{D}\in \mathcal{H}^{-1}$, then the cohomological equation $\pi(a)f-\lambda f=h$, has a solution $f\in \mathcal{H}^\infty$.

\end{enumerate}
\end{theorem}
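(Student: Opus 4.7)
The plan is to prove the three parts in order, with Part (2) as the technical heart and the other two being respectively a routine necessary condition and a formal corollary.

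\textbf{Part (1).} Applying $\lambda^{-(j+1)}\pi(a^j)$ to both sides of $\pi(a)f-\lambda f=h$ and summing over $|j|\le N$ yields the telescoping identity
\[
\sum_{|j|\le N}\lambda^{-(j+1)}\pi(a^j)h
=\lambda^{-(N+1)}\pi(a^{N+1})f-\lambda^{N}\pi(a^{-N})f.
\]
Paired against an arbitrary smooth vector, the right-hand side tends to zero as $N\to\infty$ by Howe--Moore--type decay of matrix coefficients of the partially hyperbolic element $a$; the assumption that $\pi$ is isolated from the trivial representation on each simple factor of $G$ (respectively has no nonzero $V$-fixed vector in the semidirect-product case) is precisely what rules out invariant vectors and guarantees the decay.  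This gives the distributional identity.

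\textbf{Part (2).} This is the main technical statement. I would build the solution from the two formal series
\[
f_+ := -\sum_{j=0}^{\infty}\lambda^{-(j+1)}\pi(b^j)h,
\qquad
f_- := \sum_{j=1}^{\infty}\lambda^{j-1}\pi(b^{-j})h,
\]
each of which formally solves $\pi(b)f-\lambda f=h$. After decomposing $\mathcal{H}$ spectrally with respect to $\Ad(a)$ (and, for $b$ in a neighborhood of $a$, with respect to $\Ad(b)$) into contracting, expanding and neutral pieces, one expects $f_+$ to converge in $\mathcal{H}^{m-\sigma}$ on the contracting piece and $f_-$ on the expanding piece, with tame bounds coming from the exponential decay of matrix coefficients of $b^j$ along the corresponding rays and from a fixed finite loss of derivatives $\sigma$ determined by the representation theory of $\GG$. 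The distributional hypothesis $\sum_{j\in\ZZ}\lambda^{-(j+1)}\pi(b^j)h=0$ is exactly the consistency relation ensuring that $f_+$ and $f_-$ glue into a single element of the representation.

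The real obstacle, and the novelty advertised in the introduction, is the neutral direction: there $\Ad(b)$ is isometric, neither series converges term by term, and the standard subelliptic approach from earlier papers is unavailable (it would require $(\mathfrak{B})$ and would give only a half-power loss, not a tame loss).  The plan here is to invoke the refined representation-theoretic estimates promised in later sections --- pairing the neutral directions of $b$ against hyperbolic one-parameter subgroups coming from the remaining directions of $G$, and using the distributional vanishing to absorb the a priori divergent tails into a controlled expression whose Sobolev norm is dominated by $C_{m,a}\|h\|_m$. Uniformity over the neighborhood $\{b:d(a,b)<\delta\}$ should follow once every constant in this analysis is tracked back to the eigenvalue data of $\Ad(a)$, which varies continuously near $a$.

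\textbf{Part (3).} This is a corollary of Part (2). For each smooth test vector $\phi$, set
\[
\mathcal{D}_\phi(g) := \sum_{j\in\ZZ}\lambda^{-(j+1)}\langle g,\pi(a^{-j})\phi\rangle.
\]
Exponential decay of matrix coefficients makes the sum absolutely convergent and controlled by $\|g\|_1$, so $\mathcal{D}_\phi\in\mathcal{H}^{-1}$; a direct shift of index in $\mathcal{D}_\phi((\pi(a)-\lambda I)g)$ shows it to vanish, hence $\mathcal{D}_\phi$ is a $(\pi(a)-\lambda I)$-invariant distribution. The hypothesis of Part (3) therefore forces $\mathcal{D}_\phi(h)=0$ for every smooth $\phi$, which is precisely the distributional hypothesis of Part (2) with $b=a$.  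Applying Part (2) at every $m\ge m_0$ produces $f_m\in\mathcal{H}^{m-\sigma}$, and uniqueness of the solution modulo $(\pi(a)-\lambda I)$-fixed vectors (which are zero under the non-triviality assumption on $\pi$) forces the $f_m$ to coincide with a single $f\in\bigcap_{m}\mathcal{H}^{m-\sigma}=\mathcal{H}^{\infty}$.  The principal difficulty is clearly the neutral-direction estimate in Part (2); Parts (1) and (3) are essentially formal once that is in hand.
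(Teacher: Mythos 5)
Your Parts (1) and (3) follow the paper's argument essentially verbatim: telescoping plus matrix-coefficient decay for (1), and for (3) the observation that the distributions $\mathcal{D}_\phi=\sum_j\bar{\lambda}^{-(j+1)}\langle\cdot,\pi(a^{-j})\phi\rangle$ are $(\pi(a)-\lambda I)$-invariant elements of $\mathcal{H}^{-1}$, so the hypothesis of (3) reduces to the distributional vanishing needed in (2). The gap is in Part (2), where you state the strategy but defer precisely the step that constitutes the content of the theorem. Saying that $f_+$ converges ``on the contracting piece'' and $f_-$ ``on the expanding piece'' and that the neutral directions are handled by ``pairing against hyperbolic one-parameter subgroups'' and ``absorbing the divergent tails'' is not an argument; and the one concrete mechanism you gesture at (generating the neutral vector fields from the hyperbolic ones) is exactly the condition $(\mathfrak B)$ that the paper emphasizes \emph{fails} here --- the stable and unstable directions of $a$ need not generate $\mathfrak{G}$.

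What the paper actually does, and what is missing from your proposal, is a two-tier treatment of the non-hyperbolic directions. First one passes to the ideal $\mathbf{g}\subset\mathfrak{G}$ generated by the eigenspaces of $\Ad(x_a)$ with eigenvalue $\neq 1$, with corresponding subgroup $\GG'$ and restricted representation $\pi'$; inside $\mathbf{g}$ condition $(\mathfrak B)$ does hold, so the neutral directions of $\GG'$ are expressed as polynomials of bounded degree in the hyperbolic fields, and the integer $r_0$ of \eqref{for:31} is needed so that $\chi^{s}\mu$ stays on one side of $1$ and the correct series ($D^+$ or $D^-$) can be differentiated; this is where the fixed loss $\sigma$ and the elliptic regularity theorem enter. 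Second, for the directions \emph{outside} $\mathbf{g}$ (eigenvalues in $\Delta'''(b)$, within $\epsilon$ of $1$), one shows that $(X_i^\mu)^s f$ is again a convergent series because the growth $(1+\epsilon)^{s\abs{j}}$ is dominated by the uniform decay $e^{-\gamma l(b)\abs{j}}$ for $s\le m$ provided $\epsilon$ is small; this is the only place the hypothesis $d(a,b)<\delta(m)$ is used and it explains why $\delta$ depends on $m$. Your sketch accounts for neither tier, nor for the preliminary conjugation in the semidirect-product case that places $v_b$ in the near-neutral weight space so that $\Ad(y_{b,j})$ grows only polynomially in $j$. Without these, the tame estimate $\norm{f}_{m-\sigma}\le C_{m,a}\norm{h}_m$ and the uniformity over the $\delta$-neighborhood of $a$ are unsupported.
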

The next result is about the existence of common solution to the cohomological equations. Suppose $a,\,b\in\GG$ are partially hyperbolic and linearly independent and $m_0,\,\sigma$ as in Theorem \ref{th:6}.
\begin{theorem}\label{th:9} Suppose $(\pi,\,\mathcal{H})$ is a unitary representation  of $\GG$ such that the restriction of $\pi$ to any
simple factor of $G$ is isolated from the trivial representation  if $\GG=G$; or $\pi$ contains no non-trivial $V$-fixed vectors if $\GG=G\ltimes_\rho V$. For any $m\geq m_0+\sigma$, there exists $\delta(m)>0$, such that for any $a_1,\,b_1\in \GG$ with $d(a,a_1)+d(b,b_1)<\delta$ and $a_1b_1=b_1a_1$, if $f,\,h\in \mathcal{H}^m$
and satisfy the cocycle equation
\begin{align*}
  \pi(a_1)f-\lambda_1 f=\pi(b_1)h-\lambda_2 h
\end{align*}
where $\lambda_1,\,\lambda_2\in U(1)$, then the equations
\begin{align*}
  \pi(b_1)p-\lambda_2 p=f,\qquad \pi(a_1)p-\lambda_1 p=h
\end{align*}
have a common solution $p\in \mathcal{H}^{m-\sigma}$ satisfying
the Sobolev estimate
\begin{align*}
    \norm{p}_{m-\sigma}\leq C_{m,a,b}\max\{\norm{h}_{m}, \,\norm{f}_{m}\}.
\end{align*}
\end{theorem}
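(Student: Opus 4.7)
The plan is to reduce everything to a single application of Theorem \ref{th:6}\eqref{for:33} for the equation $\pi(b_1)p-\lambda_2 p=f$, and then use commutativity of $a_1,b_1$ together with the cocycle equation to ensure that the same $p$ automatically satisfies $\pi(a_1)p-\lambda_1 p=h$.

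The first step is to verify the distributional obstruction from Theorem \ref{th:6}\eqref{for:33} for the pair $(b_1,\lambda_2)$, namely that the formal distribution
\[
\mathcal{D}_f:=\sum_{j\in\ZZ}\lambda_2^{-(j+1)}\pi(b_1^j)f
\]
vanishes. By construction $\pi(b_1)\mathcal{D}_f=\lambda_2\mathcal{D}_f$. Applying the same formal operator to both sides of the cocycle equation, and using $\pi(a_1)\pi(b_1^j)=\pi(b_1^j)\pi(a_1)$, shows that the left hand side collapses to $(\pi(a_1)-\lambda_1 I)\mathcal{D}_f$ while the right hand side telescopes to $0$. Hence $\mathcal{D}_f$ is a simultaneous eigendistribution for the commuting, linearly independent, partially hyperbolic pair $(a_1,b_1)$ with unit modulus eigenvalues $(\lambda_1,\lambda_2)$. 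The isolation / no $V$-fixed vector hypothesis, combined with the decay of matrix coefficients along the $2$-parameter subgroup generated by $a_1,b_1$, will force such simultaneous eigendistributions to vanish. Proving this vanishing is the main obstacle and should follow from the representation-theoretic estimates developed earlier in the paper.

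Once $\mathcal{D}_f=0$ is in hand, taking $\delta\leq\delta(m)$ from Theorem \ref{th:6}\eqref{for:33} applied to $(b,m)$ and using $d(b,b_1)<\delta$ produces $p\in\mathcal{H}^{m-\sigma}$ solving $\pi(b_1)p-\lambda_2 p=f$ together with the tame bound $\norm{p}_{m-\sigma}\leq C_{m,b}\norm{f}_m\leq C_{m,a,b}\max\{\norm{f}_m,\norm{h}_m\}$.

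To verify the second equation, set $q:=\pi(a_1)p-\lambda_1 p-h$. Commutativity and the identity $\pi(b_1)p-\lambda_2 p=f$ give
\begin{align*}
(\pi(b_1)-\lambda_2 I)q&=(\pi(a_1)-\lambda_1 I)(\pi(b_1)p-\lambda_2 p)-(\pi(b_1)h-\lambda_2 h)\\
&=(\pi(a_1)-\lambda_1 I)f-(\pi(b_1)h-\lambda_2 h)=0
\end{align*}
by the cocycle equation, so $\pi(b_1)q=\lambda_2 q$ with $q\in\mathcal{H}^{m-\sigma}$ and $m-\sigma\geq m_0>1$. The matrix coefficient $\langle\pi(b_1^n)q,q\rangle=\lambda_2^n\norm{q}^2$ has constant modulus $\norm{q}^2$, but tends to $0$ as $n\to\infty$ by Howe--Moore type decay of matrix coefficients (since $b_1$ is partially hyperbolic so $b_1^n\to\infty$ in $\GG$, and the representation hypothesis yields decay on smooth vectors). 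Therefore $q=0$, and the same $p$ is the required common solution.
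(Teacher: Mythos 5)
Your overall architecture matches the paper's: kill the distributional obstruction for one of the two equations using higher rank, invoke Theorem \ref{th:6}\eqref{for:33} to produce a tame solution $p$ of that equation, and then use commutativity plus the cocycle identity to show $q=\pi(a_1)p-\lambda_1 p-h$ is an eigenvector of $\pi(b_1)$ with unit-modulus eigenvalue, hence zero by matrix coefficient decay (this last step is exactly Fact \ref{fact:1} in the paper, and your computation of $(\pi(b_1)-\lambda_2 I)q=0$ is correct). The derivation that $\mathcal{D}_f$ is a simultaneous eigendistribution for $(a_1,\lambda_1)$ and $(b_1,\lambda_2)$ is also correct.

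The genuine gap is the vanishing of $\mathcal{D}_f$, which you yourself flag as ``the main obstacle'' and then do not prove. Worse, the reformulation you propose --- that any simultaneous eigendistribution in $\mathcal{H}^{-1}$ with unit-modulus eigenvalues for a commuting, independent, partially hyperbolic pair must vanish --- is a strictly stronger abstract claim than what is needed, and it does not follow from matrix coefficient decay in the way your last paragraph suggests for $q$: there the argument works because $q\in\mathcal{H}^1\subset\mathcal{H}$, so $\langle\pi(b_1^n)q,q\rangle$ is a genuine matrix coefficient of Lipschitz vectors. A distribution $\mathcal{D}_f\in\mathcal{H}^{-1}$ is not an $L^2$ vector, one cannot pair it with itself, and the eigendistribution relation only gives $\abs{\mathcal{D}_f(\pi(a_1^{-n}b_1^{-m})u)}=\abs{\mathcal{D}_f(u)}$, which is useless since $\norm{\pi(a_1^{-n}b_1^{-m})u}_1$ grows. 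Indeed, for a \emph{single} partially hyperbolic element nonzero eigendistributions abound --- they are precisely the obstructions of Theorem \ref{th:6}. The paper's argument (the ``higher rank trick'') instead exploits the explicit form of the obstruction as a sum of translates of the smooth vector: iterating the relation $\lambda_2^{-1}\sum_j\lambda_1^{-(j+1)}\pi(b_1a_1^j)h=\sum_j\lambda_1^{-(j+1)}\pi(a_1^j)h$ over all powers of $b_1$ produces the double series $\sum_{n,j}\lambda_2^{-(n+1)}\lambda_1^{-(j+1)}\pi(b_1^na_1^j)h$, which converges as a distribution by Lemma \ref{le:9}\eqref{for:45} --- this is where linear independence of $x_a,x_b$ and the two-parameter decay $d(b^na^j,e)\geq Cc_0(\abs{n}+\abs{j})$ actually enter --- and convergence of $\sum_n\lambda_2^{-1}\cdot(\text{a fixed distribution})$ forces that fixed distribution to be zero. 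You need to supply this argument (or an equivalent one using the explicit series form of $\mathcal{D}_f$); as written, the central step of the theorem is missing.
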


\begin{remark}
Results in Theorem \ref{th:6} and \ref{th:9} can be extended to partially hyperbolic flows correspondingly. See Corollary \ref{cor:3} and \ref{cor:4}.
\end{remark}
As an application, for the symmetric space examples and the twisted symmetric space examples we prove locally cocycle rigidity for any higher-rank partially hyperbolic action.  All relevant
definitions appear in Section \ref{regularrestrictions}.
\begin{theorem}\label{th:3}
Let $\alpha_{A}$ on $L\backslash\GG/\Gamma$ be an abelian higher-rank partially hyperbolic algebraic action of symmetric space examples or of the twisted symmetric space examples.
$\mathfrak{N}\subset\mathfrak{G}$ is the neutral distribution of $\alpha_{A}$ on $\GG/\Gamma$. Then there exist $p>s>0$ such that for any $m\geq p$:
\begin{enumerate}
  \item any $\mathcal{H}^m$-cocycle $\beta$ over $\alpha_{A}$ is cohomologous to a constant cocycle via a $\mathcal{H}^{m-s}$-transfer map;

\medskip
\item if $\beta$ is a $\mathcal{H}^m$-$\textrm{Ad}$-twisted cocycle taking values on $\mathfrak{N}$ over $\alpha_{A}$, then $\beta$ is cohomologous to a constant twisted
cocycle via a $\mathcal{H}^{m-s}$-transfer map.
\end{enumerate}
\end{theorem}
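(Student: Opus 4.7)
The plan is to bootstrap Theorem \ref{th:3} from Theorems \ref{th:6} and \ref{th:9} via a direct-integral decomposition of the cocycle into irreducible $\GG$-representations. First I would subtract off the constant cocycle $c(a):=\int_{\GG/\Gamma}\beta(a,\cdot)$ so that $\beta$ has zero mean on each fiber. Pulling back from $L\backslash\GG/\Gamma$ to $\GG/\Gamma$, the regular representation of $\GG$ on the orthocomplement of the constants decomposes as a direct integral of irreducibles: in the $\GG=G$ case the irreducibility of $\Gamma$ guarantees each component is isolated from the trivial representation on every non-compact simple factor, while in the $\GG=G\ltimes_\rho V$ case Mackey analysis with respect to $V$ shows that each non-trivial component contains no $V$-fixed vector. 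Thus the hypotheses of Theorems \ref{th:6} and \ref{th:9} hold uniformly across the decomposition.

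By the higher-rank hypothesis, I pick two linearly independent partially hyperbolic generators $a_1,a_2\in A$. The cocycle identity then forces
\begin{align*}
\pi(a_1)\beta(a_2,\cdot)-\beta(a_2,\cdot)=\pi(a_2)\beta(a_1,\cdot)-\beta(a_1,\cdot),
\end{align*}
which is precisely the hypothesis of Theorem \ref{th:9} with $\lambda_1=\lambda_2=1$. Applied componentwise, this produces a common transfer map $H$ satisfying $\pi(a_i)H-H=\beta(a_i,\cdot)$ for $i=1,2$ with the tame estimate $\norm{H}_{m-s}\leq C_{m,a_1,a_2}\max_i\norm{\beta(a_i,\cdot)}_m$. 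Averaging $H$ over the compact group $L$ (which commutes with $A$) yields a function on $L\backslash\GG/\Gamma$. A cocycle-identity induction extends $H$ to trivialize $\beta$ on the subgroup generated by $a_1,a_2$; since $A$ is abelian and $H$ is characterized modulo constants by any single equation $\pi(a_i)H-H=\beta(a_i,\cdot)$, the remaining directions of $A$ are trivialized automatically, leaving only the constant cocycle $c$.

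For part (2) I would first decompose $\mathfrak{N}$ into joint weight spaces of $\Ad(A)$; since $\mathfrak{N}$ is the neutral distribution, $\Ad(a)$ has spectrum in $U(1)$ on $\mathfrak{N}$, so each weight is a character $\chi:A\to U(1)$. Restricted to a weight space, the $\Ad$-twisted cocycle equation becomes the $\chi(a)$-twisted cohomological equation of Theorem \ref{th:6}. Then I would apply Theorem \ref{th:9} with $\lambda_i=\chi(a_i)$ on each of the finitely many weight spaces and assemble the pieces into the desired $\mathcal{H}^{m-s}$-transfer map.

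The main obstacle will be verifying, in each irreducible component and for each joint weight $\chi$, that the distributional obstruction from Theorem \ref{th:6}(1)---namely $\sum_j\chi(a)^{-(j+1)}\pi(a^j)\beta(a,\cdot)$---vanishes. For this I would use the cocycle identity together with the commutativity of $A$ to show that any such obstruction must be simultaneously $(\pi(a_i)-\chi(a_i)I)$-invariant for $i=1,2$; matrix-coefficient decay (Howe--Moore type) inside each irreducible then forces it to zero. A subsidiary point is securing uniformity of the constants $C_{m,a_i}$ across the direct integral, which is precisely what the isolation-from-trivial-representation hypothesis delivers.
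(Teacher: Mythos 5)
Your part (1) is essentially the paper's route: subtract the mean, pass to $L^2_0(\GG/\Gamma)$, apply Theorem \ref{th:9} with $\lambda_1=\lambda_2=1$ to two independent regular generators, and propagate to all of $A$ by uniqueness of the transfer map (Fact \ref{fact:1}). Two small remarks there: the direct-integral decomposition into irreducibles is an unnecessary detour that creates exactly the uniformity problem you flag at the end --- the paper instead verifies the isolation hypothesis once for the whole representation $L^2_0$ (Theorem \ref{th:8} and Corollary \ref{cor:1}), and Theorems \ref{th:6}--\ref{th:9} are already stated for arbitrary unitary representations with that property, so no fiberwise argument is needed; and $L$-invariance of the solution is read off from the explicit telescoping-sum formula rather than by averaging.

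For part (2) there is a genuine gap. You assume that $\mathfrak{N}$ decomposes into joint weight spaces on which $\Ad(a)$ acts by characters $\chi:A\to U(1)$, reducing the twisted equation to the scalar $\chi(a_i)$-twisted equations of Theorem \ref{th:9}. But $\Ad(a)\mid_{\mathfrak{N}}$ has all eigenvalues in $U(1)$ without being semisimple: writing $a=k_ax_an_a$, the unipotent factor $n_a$ acts nontrivially on the centralizer $\mathfrak{g}_0$ (and on $\mathfrak{e}_0$ in the twisted case), so $\Ad(a)\mid_{\mathfrak{N}}$ generically has nontrivial Jordan blocks and the vector-valued equation is upper-triangular, not diagonal. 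This is precisely what the paper's Lemma \ref{le:16} handles: within each Jordan block one solves the last coordinate first (a genuine scalar twisted equation), substitutes the solution back into the obstruction sum for the next coordinate, and verifies by a telescoping identity that the solvability condition for the $i$-th coordinate reduces to $\sum_j\lambda^{-(j+1)}(\vartheta_i+\omega_{i+1})\circ z^j=0$; the loss of regularity then accumulates linearly in the block size. Relatedly, your vanishing-of-obstructions sketch (``the obstruction is a joint eigendistribution, so Howe--Moore kills it'') is not the actual mechanism --- the obstruction is a distribution, not a vector, so decay of matrix coefficients does not apply to it directly; the paper's higher-rank trick instead shows the \emph{double} sum over $b_1^na_1^j$ converges as a distribution (Lemma \ref{le:9}, using linear independence of $x_{a_1},x_{b_1}$) while simultaneously equalling infinitely many copies of the single-element obstruction, forcing the latter to vanish. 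Your proposal would need both the Jordan-block induction and this convergence argument to close.
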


\noindent{\bf Acknowledgements.} I would like to thank  Roger Howe for discussion of matrix coefficients decay on
twisted symmetric spaces. Livio Flaminio suggested a
method of obtaining tame estimates in the centralizer direction
in a different setting  that inspired our arguments on that topic.
\section{Preliminaries on unitary representation theory}\label{sec:15}
\subsection{Sobolev space and elliptic regularity theorem}\label{sec:17} Let $\pi$ be a unitary representation of a Lie group $S$ with Lie algebra $\mathfrak{s}$ on a
Hilbert space $\mathcal{H}=\mathcal{H}(\pi)$.
\begin{definition}\label{de;1}
For $k\in\NN$, $\mathcal{H}^k(\pi)$ consists of all $v\in\mathcal{H}(\pi)$ such that the
$\mathcal{H}$-valued function $s\rightarrow \pi(s)v$ is of class $C^k$ ($\mathcal{H}^0=\mathcal{H}$). For $X\in\mathfrak{s}$, $d\pi(X)$ denotes the infinitesimal generator of the
one-parameter group of operators $t\rightarrow \pi(\exp tX)$, which acts on $\mathcal{H}$ as an essentially skew-adjoint operator. For any $v\in\mathcal{H}$, we also write $Xv:=d\pi(X)v$.
\end{definition}
We shall call $\mathcal{H}^k=\mathcal{H}^k(\pi)$ the space of $k$-times differentiable vectors for $\pi$ or the \emph{Sobolev space} of order $k$. The
following basic properties of these spaces can be found, e.g., in \cite{Nelson} and \cite{Goodman}:
\begin{enumerate}
  \item $\mathcal{H}^k=\bigcap_{m\leq k}D(d\pi(Y_{j_1})\cdots d\pi(Y_{j_m}))$, where $\{Y_j\}$ is a basis for $\mathfrak{s}$, and $D(T)$
denotes the domain of an operator on $\mathcal{H}$.

\medskip
  \item $\mathcal{H}^k$ is a Hilbert space, relative to the inner product
  \begin{align*}
    \langle v_1,\,v_2\rangle_{S,k}:&=\sum_{1\leq m\leq k}\langle Y_{j_1}\cdots Y_{j_m}v_1,\,Y_{j_1}\cdots Y_{j_m}v_2\rangle+\langle v_1,\,v_2\rangle
  \end{align*}
  \item The spaces $\mathcal{H}^k$ coincide with the completion of the
subspace $\mathcal{H}^\infty\subset\mathcal{H}$ of \emph{infinitely differentiable} vectors with respect to the norm
\begin{align*}
    \norm{v}_{S,k}=\bigl\{\norm{v}^2+\sum_{1\leq m\leq k}\norm{Y_{j_1}\cdots Y_{j_m}v}^2\bigl\}^{\frac{1}{2}}.
  \end{align*}
induced by the inner product in $(2)$. The subspace $\mathcal{H}^\infty$
coincides with the intersection of the spaces $\mathcal{H}^k$ for all $k\geq 0$.

\medskip
\item $\mathcal{H}^{-k}$, defined as the Hilbert space duals of
the spaces $\mathcal{H}^{k}$, are subspaces of the space $\mathcal{H}^{-\infty}$ of distributions, defined as the
dual space of $\mathcal{H}^\infty$.
  \end{enumerate}
We write $\norm{v}_{k}:=\norm{v}_{S,k}$ and $ \langle v_1,\,v_2\rangle_{k}:= \langle v_1,\,v_2\rangle_{S,k}$ if there is no confusion. Otherwise,
we use subscripts to emphasize that the regularity is measured with respect to $S$.

\subsection{Elliptic regularity theorem}
We list the well-known elliptic regularity theorem which will be frequently
used in this paper (see \cite[Chapter I, Corollary 6.5 and 6.6]{Robinson}):
\begin{theorem}\label{th:4}
Fix a basis $\{Y_j\}$ for $\mathfrak{s}$ and set $L_{2m}=\sum Y_j^{2m}$, $m\in\NN$. If $L_{2m}v\in\mathcal{H}$, then $v\in \mathcal{H}^{2m}$ with Sobolev estimate
\begin{align*}
    \norm{v}_{2m}\leq C_m(\norm{L_{2m}v}+\norm{v}),\qquad \forall\, m\in\NN
\end{align*}
where $C_m$ is a constant only dependent on $m$ and $\{Y_j\}$.
\end{theorem}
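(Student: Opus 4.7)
My strategy is to prove the theorem by induction on $m$, exploiting the fact that $L_{2m}$ is an elliptic element of the universal enveloping algebra $\mathcal{U}(\mathfrak{s})$ whose principal symbol $\sum_j \xi_j^{2m}$ is strictly positive off the origin. The key analytic ingredients are G{\aa}rding-type inequalities and careful commutator estimates, which translate this symbolic ellipticity into an inequality between abstract Sobolev norms on the representation $\mathcal{H}$.

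I would first dispatch the base case $m=1$. Since each $Y_j$ is essentially skew-adjoint on $\mathcal{H}^\infty$ (via Nelson's theorem on analytic vectors), the closure of $-L_2$ is a non-negative self-adjoint operator. The first-order G{\aa}rding inequality
\[
\norm{v}_1^2 - \norm{v}^2 \;=\; \sum_j\norm{Y_jv}^2 \;=\; \langle -L_2v,v\rangle \;\leq\; \tfrac12\bigl(\norm{L_2v}^2+\norm{v}^2\bigr)
\]
gives $\norm{v}_1\leq C(\norm{L_2v}+\norm{v})$. For the second-order bound, I would expand via adjointness
\[
\sum_{i,j}\norm{Y_iY_jv}^2 \;=\; \langle v, L_2^2 v\rangle \;+\; (\text{terms involving at most three } Y_k\text{'s and structure constants } c_{ij}^k),
\]
using $[Y_i,Y_j]=\sum_k c_{ij}^k Y_k$ to commute factors. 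The correction terms are dominated by $C(\norm{L_2v}\norm{v}_1+\norm{v}_1^2)$ and absorbed via the first-order estimate, yielding $\norm{v}_2\leq C(\norm{L_2v}+\norm{v})$.

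For the inductive step, the cleanest route is to pass to a coordinate chart on $S$ via the exponential map and view $L_{2m}$ as a differential operator whose symbol at the identity is $\sum_j\xi_j^{2m}$, elliptic of order $2m$. Classical elliptic regularity in this chart (through pseudodifferential calculus, or directly via the Fourier transform and a parametrix $(I+\sum\xi_j^{2m})^{-1}$) yields a local bound of the form $\norm{f}_{H^{2m}}\leq C(\norm{L_{2m}f}_{L^2}+\norm{f}_{L^2})$ for smooth functions on $S$. Applying this to the matrix coefficients $s\mapsto\langle\pi(s)v,w\rangle$ (for which the $Y_j$ act as right-invariant vector fields), and using a partition of unity together with the density of $\mathcal{H}^\infty$ in the domain of $L_{2m}$, lifts the estimate to abstract vectors $v\in\mathcal{H}$. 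An alternative, more representation-theoretic route is to invoke the spectral theorem for the positive self-adjoint operator $(-1)^mL_{2m}$ and compare its powers with $(I-L_2)^m$ modulo operators of strictly lower order in $\mathcal{U}(\mathfrak{s})$.

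\textbf{Main obstacle.} The delicate point at each stage is the bookkeeping of commutator corrections: rearranging a product $Y_{i_1}\cdots Y_{i_{2m}}$ to extract a clean multiple of $L_{2m}$ produces, by repeated application of $[Y_i,Y_j]=\sum_kc_{ij}^kY_k$, a cascade of lower-order terms in the $Y_j$. Showing that these corrections can be absorbed by the inductive hypothesis, and that the resulting constant $C_m$ depends only on $m$ and the chosen basis (through the structure constants) rather than on $v$, is the essentially combinatorial heart of the proof. The symbolic/pseudodifferential viewpoint is useful here precisely because on the level of principal symbols these corrections are transparently of lower order.
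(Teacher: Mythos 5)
The paper does not prove this statement at all: it is quoted from Robinson's \emph{Elliptic Operators and Lie Groups} (Chapter I, Corollaries 6.5 and 6.6), so there is no internal argument to compare yours against. Your outline does follow the standard route of that reference --- G{\aa}rding-type inequalities, ellipticity of the symbol $\sum_j\xi_j^{2m}$, and commutator bookkeeping in the enveloping algebra --- but as written it only establishes the \emph{a priori} inequality for vectors already known to be smooth, and that is not the hard part of the theorem. The hypothesis is merely that $L_{2m}v$ lies in $\mathcal{H}$ (necessarily in a weak or distributional sense), and the substance of the statement is the regularity assertion that $v$ then belongs to $\mathcal{H}^{2m}$. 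Your base-case identities, e.g.\ $\sum_j\norm{Y_jv}^2=\langle -L_2v,v\rangle$, already presuppose $v\in\mathcal{H}^2$. To upgrade the a priori estimate on $\mathcal{H}^\infty$ to the stated theorem you must know that $\mathcal{H}^\infty$ is a core for the maximal (weak) extension of $L_{2m}$ --- equivalently, that $(-1)^mL_{2m}$ is essentially self-adjoint on $\mathcal{H}^\infty$ and its closure coincides with its adjoint. This is precisely the nontrivial analytic input (Nelson--Stinespring for $m=1$, Langlands and Robinson in general), and your plan invokes it only under the phrase ``density of $\mathcal{H}^\infty$ in the domain of $L_{2m}$'' without argument. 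Without that core property, the a priori estimate says nothing about a general $v$ satisfying the hypothesis.

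A second, smaller gap sits in your proposed inductive step. Scalar elliptic regularity applied to the matrix coefficients $s\mapsto\langle\pi(s)v,w\rangle$ yields, for each fixed $w$, a local Sobolev bound whose constant depends on $w$; concluding $v\in\mathcal{H}^{2m}$ with the stated norm bound requires either the $\mathcal{H}$-valued version of the elliptic estimate applied to the orbit map $s\mapsto\pi(s)v$ (regularized by convolution with a smooth bump on $S$, so that one works with genuine smooth vectors throughout), or a separate uniform-boundedness argument, since weak $C^{2m}$ differentiability of the orbit map does not immediately give membership in $\mathcal{H}^{2m}$ with norm control. Both gaps are fixable and are handled in the cited reference, but they constitute the actual content of the theorem rather than routine details, so the proposal as it stands is an accurate plan rather than a proof.
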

Suppose $\Gamma$ is a torsion-free cocompact lattice in $S$. Denote by $\Upsilon$ the regular representation of $S$ on $\mathcal{H}(\Upsilon)=L^2(S/\Gamma)$. Then we have the following subelliptic regularity theorem (see \cite{Spatzier2}):
\begin{theorem}\label{th:5}
Fix $\{Y_j\}$ in $\mathfrak{s}$ such that commutators of $Y_j$ of length at most $r$ span $\mathfrak{s}$. Also set $L_{2m}=\sum Y_j^{2m}$, $m\in\NN$. Suppose $f\in\mathcal{H}(\Upsilon)$ or  $f\in \mathcal{H}^{-\infty}$. If $L_{2m}f\in \mathcal{H}(\Upsilon)$ for any $m\in\NN$, then $f\in \mathcal{H}^\infty(\Upsilon)$ and satisfies
\begin{align}\label{for;1}
\norm{f}_{\frac{2m}{r}-1}\leq
C_m(\norm{L_{2m}f}+\norm{f}),\qquad \forall\, m\in\NN
\end{align}
where $C_m$ is a constant only dependent on $m$ and $\{Y_j\}$.
\end{theorem}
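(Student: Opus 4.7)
The plan is to reduce the subelliptic estimate for the higher-order operator $L_{2m}$ to H\"ormander's sum-of-squares theorem for the sub-Laplacian $L_2=\sum_j Y_j^2$ and then bootstrap. The bracket-generating hypothesis (commutators of $\{Y_j\}$ of length at most $r$ span $\mathfrak{s}$) is precisely H\"ormander's condition, so on the cocompact quotient $S/\Gamma$ the operator $L_2$ is hypoelliptic and admits the sharp base estimate
\begin{align*}
\norm{f}_{2/r}\leq C\bigl(\norm{L_2 f}+\norm{f}\bigr),
\end{align*}
proved by the classical H\"ormander--Kohn argument: induction on bracket depth $\ell\leq r$ together with integration by parts against $L_2$ controls each iterated bracket $[Y_{i_1},[Y_{i_2},\ldots,Y_{i_\ell}]\ldots]f$ by an interpolated quantity of the form $\norm{L_2 f}^{1/\ell}\norm{f}^{1-1/\ell}$, and compactness of $S/\Gamma$ makes the estimate global.

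The next step is to lift from $L_2$ to $L_{2m}$. Since each $Y_j$ is essentially skew-adjoint in the regular representation, $(Y_j^m)^\ast Y_j^m=(-1)^m Y_j^{2m}$, giving
\begin{align*}
\sum_j\norm{Y_j^m f}^2=(-1)^m\langle L_{2m}f,\,f\rangle\leq \norm{L_{2m}f}\cdot\norm{f}.
\end{align*}
On the abelian model $\mathfrak{s}=\RR^n$, $Y_j=\partial_j$, Jensen's inequality $|\xi|^{2m}\leq n^{m-1}\sum_j\xi_j^{2m}$ gives directly $\norm{L_2^m f}\leq n^{m-1}\norm{L_{2m}f}$; in the general non-abelian setting the same comparison holds modulo commutator remainders involving strictly fewer than $2m$ vector-field derivatives of $f$. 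Iterating the base H\"ormander estimate $m$ times yields $\norm{f}_{2m/r}\leq C_m(\norm{L_2^m f}+\norm{f})$, and combining this with the comparison while absorbing the commutator remainders via the interpolation inequality $\norm{f}_s\leq \norm{f}_{s_1}^{\theta}\norm{f}_{s_2}^{1-\theta}$ produces the advertised estimate $\norm{f}_{\frac{2m}{r}-1}\leq C_m(\norm{L_{2m}f}+\norm{f})$.

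For a distributional $f\in\mathcal{H}^{-\infty}(\Upsilon)$ with $L_{2m}f\in\mathcal{H}(\Upsilon)$ for every $m$, Friedrichs mollification on the compact quotient $S/\Gamma$ furnishes smooth approximants $\eps{f}\in\mathcal{H}^\infty(\Upsilon)$ with uniformly bounded right-hand side; passing to the limit and then letting $m\to\infty$ gives $f\in\mathcal{H}^\infty(\Upsilon)$. The main obstacle is the comparison step above: the operators $(\sum_j Y_j^2)^m$ and $\sum_j Y_j^{2m}$ differ by a combinatorially involved expression in iterated commutators of the $Y_j$'s, and organizing the bookkeeping so that each commutator correction is genuinely subordinate (via interpolation) to the leading $L_{2m}$ term is exactly what forces the loss of one derivative in the final exponent $2m/r-1$, rather than the optimal $2m/r$ one would naively expect from $m$ clean iterations of the base estimate.
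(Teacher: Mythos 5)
The paper does not actually prove this statement: it is quoted from Katok--Spatzier \cite{Spatzier2}, and the remark that follows it in the text describes the intended route, namely applying the local subelliptic regularity theory of Rothschild--Stein \cite{Rothschild} directly to the $2m$-th order operator $L_{2m}$ on coordinate neighbourhoods and then using compactness of $S/\Gamma$ to obtain a uniform global constant $C_m$. Your proposal instead routes everything through the second-order sub-Laplacian $L_2=\sum_j Y_j^2$ and then tries to compare $L_2^m$ with $L_{2m}$, and that comparison is exactly where the argument breaks down.

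Concretely: expanding $L_2^m=\bigl(\sum_j Y_j^2\bigr)^m$ produces cross terms $Y_{j_1}^2Y_{j_2}^2\cdots Y_{j_m}^2$ with distinct indices. These are genuinely of order $2m$ in mixed directions; they are \emph{not} ``commutator remainders involving strictly fewer than $2m$ derivatives,'' and in a noncommutative enveloping algebra there is no analogue of the pointwise inequality $\abs{\xi}^{2m}\le n^{m-1}\sum_j\xi_j^{2m}$. Controlling a mixed monomial $\norm{Y_{j_1}\cdots Y_{j_{2m}}f}$ by $\sum_j\norm{Y_j^{2m}f}+\norm{f}$ is precisely the content of Goodman's theorem \cite{Goodman1} and of the Katok--Spatzier estimate, and it is where the loss from $2m/r$ to $2m/r-1$ comes from; you cannot absorb these terms by the interpolation inequality $\norm{f}_s\le\norm{f}_{s_1}^{\theta}\norm{f}_{s_2}^{1-\theta}$, because that requires an a priori bound on $\norm{f}_{s_2}$ for some $s_2$ at least as large as the order of the term being absorbed, which is exactly what is being proved --- as written the step is circular. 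A secondary issue: for $f\in\mathcal{H}^{-\infty}$ the intermediate quantities $\norm{Y_{j_1}\cdots Y_{j_k}f}$ appearing in your integration-by-parts bookkeeping are not known to be finite, so mollification has to be carried through every estimate with uniform constants rather than appended at the end. A self-contained proof should either follow Goodman's scheme (controlling mixed monomials by pure powers of the generators) or simply quote the local Rothschild--Stein estimate for $L_{2m}$ itself and globalize by compactness, which is what the cited references do.
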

\begin{remark}
The elliptic regularity theorem is a general property, while the subelliptic regularity theorem only has local versions on manifolds (see \cite{Rothschild}). More precisely, the proof of the above theorem is based on the following general subelliptic regularity theorem: for any $x\in S/\Gamma$ there is an open neighbourhood $V$ containing $x$ such that if $f$ and $L_{2m}f$ are both in $L^2(V)$, then
\begin{align*}
\norm{f}_{\frac{2m}{r}-1}\leq
C_{m,V}(\norm{L_{2m}f}+\norm{f}),\qquad \forall\, m\in\NN
\end{align*}
where $C_{m,V}$ is a constant only dependent on $m$, $\{Y_j\}$ and $V$. In other words, we can only get a local version of \eqref{for;1} on the manifold $S/\Gamma$. Then the compactness of $S/\Gamma$ is essential to the existence of a uniform constant $C_m$ for the global Sobolev estimates.
\end{remark}
\subsection{Exponential matrix coefficients decay}
\begin{definition}
Let $\pi$ be a unitary
representation of $S$ on a Hilbert space $\mathcal{H}$. Say that a
vector $v\in\mathcal{H}$ is $\delta$-Lipschitz  if
\begin{align*}
\delta=\sup_{g\in
G-\{e\}}\frac{\norm{\pi(g)v-v}}{\text{dist}(e,g)}<\infty;
\end{align*}
we will refer to the number $\delta$ as to the $\delta$-Lipschitz
coefficient of $v$, and say that the vector $v$ is
$\delta$-Lipschitz.
\end{definition}
If $S$ is semisimple without compact factors and with finite center, Kleinbock and Margulis (see \cite[appendix]{Kleinbock}) extended the matrix coefficient decay result about smooth vectors in \cite{Spatzier1}
to Lipschitz vectors. Fix a maximal compact subgroup $K$ of $S$ and a Riemannian metric $d$ on $S$ which is bi-invariant with respect to $K$.
\begin{theorem}[Kleinbock and Margulis]\label{th:2} Let $(\pi,\,\mathcal{H})$ be a unitary representation of
$S$  such that the restriction of $\pi$ to any
simple factor of $S$ is isolated from the trivial representation. Then there exist constants $\gamma,E>0$,
dependent only on $S$ such that if $v_i\in \mathcal{H}$, $i=1,2$, be
$\delta_i$-Lipschitz vectors then for any $g\in S$
\begin{align*}
\abs{\langle\pi(g)v_1,v_2\rangle}&\leq
(E\norm{v_1}\norm{v_2}+\delta_1\norm{v_2}+\delta_2\norm{v_1}+\delta_1\delta_2)e^{-\gamma d(e,g)}.
\end{align*}
\end{theorem}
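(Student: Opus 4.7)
The plan is to derive the Lipschitz estimate from the classical exponential decay of matrix coefficients for \emph{smooth} vectors via a mollification argument. The classical input is the quantitative Howe--Moore/Cowling bound: under the spectral-gap hypothesis there exist $\gamma, C_0 > 0$ depending only on $S$ such that
\[ \abs{\langle \pi(g)u_1, u_2\rangle} \leq C_0\, \norm{u_1}_1\norm{u_2}_1\, e^{-\gamma d(e,g)}, \qquad u_1,u_2\in \mathcal{H}^1(\pi). \]
The task then reduces to approximating the Lipschitz vectors $v_i$ by smooth ones with a quantitative trade-off between approximation error and Sobolev norm.

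First I would fix a nonnegative $C^\infty$ function $\phi$ on $S$ supported in a small bi-$K$-invariant neighbourhood of $e$ with $\int\phi = 1$, and for $\epsilon\in(0,1]$ rescale it to a bump $\phi_\epsilon$ supported in $B(e,\epsilon)$. Define the mollified vectors $v_i^\epsilon := \int_S \phi_\epsilon(g)\pi(g)v_i\,dg$. The Lipschitz hypothesis immediately gives
\[ \norm{v_i^\epsilon - v_i} \leq \int\phi_\epsilon(g)\norm{\pi(g)v_i - v_i}\,dg \leq \delta_i\epsilon. \]
Moreover $v_i^\epsilon \in \mathcal{H}^\infty$, and --- this is the crux --- its first Sobolev norm is bounded \emph{independently of $\epsilon$} by $\norm{v_i^\epsilon}_1 \leq C_1(\norm{v_i} + \delta_i)$. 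The idea is that for any basis element $Y_j\in\mathfrak{s}$, the map $g\mapsto \pi(g)v_i$ is itself $\delta_i$-Lipschitz, and its difference-quotients along $Y_j$ inside the integral defining $v_i^\epsilon$ are uniformly bounded by $\delta_i$, regardless of how narrow $\phi_\epsilon$ is.

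Applying the smooth-vector bound to $v_1^\epsilon,v_2^\epsilon$ and using the triangle inequality yields
\begin{align*}
\abs{\langle \pi(g)v_1, v_2\rangle}
&\leq \abs{\langle \pi(g)v_1^\epsilon, v_2^\epsilon\rangle} + \norm{v_1-v_1^\epsilon}\norm{v_2} + \norm{v_1^\epsilon}\norm{v_2-v_2^\epsilon} \\
&\leq C_0 C_1^2(\norm{v_1}+\delta_1)(\norm{v_2}+\delta_2)\, e^{-\gamma d(e,g)} + \delta_1\epsilon\norm{v_2} + \norm{v_1}\delta_2\epsilon.
\end{align*}
Since the dominant term is already independent of $\epsilon$, sending $\epsilon\to 0$ eliminates the error terms, and expanding $(\norm{v_1}+\delta_1)(\norm{v_2}+\delta_2)$ produces exactly the four-term bound $E\norm{v_1}\norm{v_2}+\delta_1\norm{v_2}+\delta_2\norm{v_1}+\delta_1\delta_2$ with $E = C_0C_1^2$, as required.

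The main obstacle is establishing the $\epsilon$-independent inequality $\norm{d\pi(Y_j)v_i^\epsilon}\leq C_1\delta_i$. A naive integration by parts would move $Y_j$ onto $\phi_\epsilon$ and lose a factor $\epsilon^{-1}$, collapsing the scheme. The correct viewpoint is that a Hilbert-space-valued Lipschitz map on $S$ carries an intrinsic first-order regularity --- essentially a weak form of Rademacher's theorem --- so that mollification need not cost any regularity in the vector argument. Making this precise (likely through a careful choice between left- and right-averaging and exploiting the bi-$K$-invariance of $\phi$) is the sole delicate point; once it is in hand, the remainder is straightforward bookkeeping, and one may even avoid the explicit $\epsilon\to 0$ limit by optimizing $\epsilon = e^{-\gamma d(e,g)}$ at the cost of a slightly smaller exponent $\gamma$.
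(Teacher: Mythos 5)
First, a point of reference: the paper does not prove this statement at all --- it is quoted verbatim from the appendix of Kleinbock--Margulis \cite{Kleinbock}, so there is no in-paper proof to compare against. Your mollification strategy is a legitimate alternative to their argument (which regularizes only over the maximal compact $K$, using $K$-finite approximate identities and the $K$-finite form of the Cowling/Howe/Katok--Spatzier decay estimate, and then optimizes the truncation level against the exponential decay). Your ``crux'' step is in fact fine and you should not be worried about it: writing $\pi(\exp(tY)g)v-\pi(g)v=\pi(g)\bigl(\pi(\exp(t\,\mathrm{Ad}(g^{-1})Y))v-v\bigr)$ and using unitarity, the Lipschitz hypothesis gives a bound $C\delta_i\abs{t}\norm{Y}$ on the left difference quotients uniformly over $g$ in a fixed neighbourhood of $e$ (the constant absorbs $\norm{\mathrm{Ad}(g^{-1})}$, which is harmless since $g\in\mathrm{supp}\,\phi_\epsilon\subset B(e,1)$), so $\norm{d\pi(Y)v_i^\epsilon}\leq C\delta_i\norm{Y}$ independently of $\epsilon$, as you claimed.

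The genuine soft spot is your ``classical input.'' The decay estimate with \emph{first-order} Sobolev norms $\norm{u_1}_1\norm{u_2}_1$ is not the classical smooth-vector theorem; since every $\mathcal{H}^1$-vector is Lipschitz with $\delta\leq C\norm{v}_1$, that input is essentially a special case of the very statement being proved. What is actually classical is the bound for $K$-finite vectors with factors $\dim(Kv_i)^{1/2}$, or equivalently for smooth vectors with $K$-Sobolev norms of order $\ell$ roughly comparable to $\dim K$ (needed to sum over $K$-types). With that input your scheme still works, but $\norm{v_i^\epsilon}_\ell$ blows up like $\epsilon^{-(\ell-1)}(\norm{v_i}+\delta_i)$, so the clean ``send $\epsilon\to 0$'' step fails: the optimization $\epsilon\sim e^{-\gamma d(e,g)/(2\ell-1)}$, which you relegate to a closing aside, is in fact the essential step, and it degrades the exponent to $\gamma/(2\ell-1)$. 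That loss is immaterial for the theorem as stated (only the existence of some $\gamma>0$ depending on $S$ is claimed), so the proof closes --- but you should present the optimization as the main argument rather than as an optional refinement, and state the smooth-vector input with the correct Sobolev order.
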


\begin{remark}\label{re:5}
If $\GG=G\ltimes_\rho V$, then it also follows from Theorem $1.2$ of \cite{Zhenqi23} that for any unitary representation $\pi$ of
$\GG$ without $V$-fixed vectors, its restriction to any simple factor of $G$ is isolated from the trivial representation. The implies that
the above theorem applies for the restriction of $\pi$ to $G$.
\end{remark}

\section{Solution of the twisted coboundary}\label{section:linear}
Throughout this part, $(\pi,\,\mathcal{H})$ always denotes a unitary representation  of $\GG$ such that the restriction of $\pi$ to any
simple factor of $G$ is isolated from the trivial representation  if $\GG=G$; or $\pi$ contains no non-trivial $V$-fixed vectors if $\GG=G\ltimes_\rho V$.

The subsequent discussion will be devoted to the proof of Lemma \ref{le:8}. Typically
differences between the cases $\GG=G$ and $\GG=G\ltimes_\rho V$ are minimal, and usually appear at the level of
notations. However, the case $\GG=G\ltimes_\rho V$ requires a separate argument in order to obtain exponential decay of matrix coefficients.

First we define obstructions to solvability of the twisted coboundary  equation,  for a single element  and show that
vanishing of those obstructions implies solvability of  the equation
with tame estimates with respect to Sobolev norms. The latter property  is an
instance of {\em cohomological stability}, the  notion  first
defined in \cite{Katok-constructions}. The scheme of the proof is as follows:

\begin{enumerate}
\item We note  that $f$ is the solution of the equation \eqref{for:2}
if and only if $\pi(s)f$ is the solution of the equation
\begin{align}\label{for:14}
 \pi(sbs^{-1})\pi(s)f-\lambda \pi(s)f=\pi(s)h;
\end{align}
and $\sum_{j=-\infty}^{+\infty}\lambda^{-(j+1)} \pi(b^j)h=0$ as a distribution if and only if
\begin{align*}
 \sum_{j=-\infty}^{+\infty}\lambda^{-(j+1)} \pi((sbs^{-1})^j)(\pi(s)h)=0
\end{align*}
as a distribution. Then instead of solving the equation \eqref{for:2}, we solve the equation \eqref{for:14} for a well chosen $s$ (see Section \ref{sec:1}).

\item The crucial step in proving Theorem \ref{th:6} is Lemma \ref{le:8}. Since generally, condition $(\mathfrak B)$ fails, we study the restricted representation $\pi'$ of $\pi$ on $\GG'$ (see Section \ref{sec:2}) instead of $\pi$. Note that condition $(\mathfrak B)$ holds on $\GG'$.

\item    Decay estimates for matrix coefficients imply existence of two distribution solutions obtained
by iteration in positive and negative directions: one of those solutions is  differentiable along  stable directions and the other along unstable directions.
\item Vanishing of the obstructions implies  that those distribution solutions coincide. Since solution along the stable and unstable directions  is given by explicit  exponentially converging ``telescoping sums'',  they can be differentiated without  loss of regularity. Up to this point  the proof follows the same general  scheme as in \cite{Spatzier1} although we obtain more elaborate information about estimates in other directions.

\item Remaining directions in $\GG'$ with Ad eigenvalues sufficiently close to $1$ for the acting element; hence  derivatives of all orders in those direction have every slow increasing speed.
Tame estimates follow from that and from the fact that  those vector-fields can be expressed as polynomial  of hyperbolic ones, i.e. from condition   $(\mathfrak B)$ on $\GG'$.

\item Note that the derivatives of all orders in directions outside $\GG'$  are still distributions for $\pi'$ and satisfy the solvability condition. Then estimates for these directions follow from previous steps.
\end{enumerate}

\subsection{Spectral space decomposition}\label{sec:2} If $\GG=G$, for any $s\in G$ under the adjoint representation of $G$, $\text{Ad}(s)$ has the decomposition:
\begin{align}\label{for:1201}
  \text{Ad}(s)= \exp(Z_s)\exp(X_s)\exp(Y_s)
\end{align}
for $3$ commuting elements, where $Z_s$ is compact, $X_s$ is $\RR$-semisimple, $Y_s$ is nilpotent (see, eg, \cite[Proposition 2]{goto78}).

The decomposition \eqref{for:1201} also implies that we have the corresponding decomposition for $a$:
\begin{align}\label{for:9}
 s=k_s x_s n_s
\end{align}
where $x_a=\exp(X_a)\in G$ is $\RR$-semisimple, $k_a=\exp(Z_s)\in G$ is compact and $n_s=\exp(Y_a)\in G$ is nilpotent. It is clear that $x_a$ and $n_a$ commute and $k_sx_s=x_sk_sz_1$ and $k_sn_s=n_sk_sz_2$, where $z_1,\,z_2\in Z(G)$. Since $Z(G)$ is finite, there exists $n\in N$ such that $k_sx_s^nk_s^{-1}=x_s^n$, which implies that $\text{Ad}(k_s)(X_s)=X_s$. This shows that $k_s$ and $x_s$ commute. Similarly, we get $k_sn_s=n_sk_s$.

If $s$ is partially hyperbolic, $x_s$ is non-trivial. If $\GG=G\ltimes_\rho V$, For any $s=(g_s, v_s)\in\GG$, if $s$ is partially hyperbolic, $x_{g_s}$ is non-trivial.

For any partially hyperbolic element $s$, the Lie algebra
$\mathfrak{G}$ of $\GG$ has the eigenspace decomposition for $\text{Ad}(x_s)$ or $\text{Ad}(x_{g_s})$:
\begin{align}\label{for:5}
\mathfrak{G}=\sum_{\mu\in\Delta(s)}\mathfrak{g}_\mu(s)
\end{align}
where $\Delta(s)$ is the set of eigenvalues and $\mathfrak{g}_\mu(s)$ is the eigenspace for eigenvalue $\mu$. We note that the eigenvalues
of $\text{Ad}(s)$ are determined by those of $\text{Ad}(x_s)$ or $\text{Ad}(x_{g_s})$ up to some elements in $U(1)$.

Let $\mathbf{g}$ be the subalgebra generated by all $\mathfrak{g}_\mu$, $\mu\neq 1$. Then:
\begin{lemma}\label{le:2}
\begin{enumerate}
  \item \label{for:71}$\mathbf{g}$ is an ideal in $\mathfrak{G}$.
  \smallskip
  \item \label{for:73} If $\GG=G\ltimes V$, then $V\subset \mathbf{g}$.
\end{enumerate}
\end{lemma}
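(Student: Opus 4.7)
The plan is to prove the two parts separately, with part (1) feeding into part (2).

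For part (1), the key observation is that \eqref{for:5} is a multiplicative grading: since $\text{Ad}(x_s)$ (resp.\ $\text{Ad}(x_{g_s})$) is a Lie algebra automorphism, $[\mathfrak{g}_\mu,\mathfrak{g}_\nu]\subseteq\mathfrak{g}_{\mu\nu}$. Writing $\mathcal{S}:=\bigoplus_{\mu\neq 1}\mathfrak{g}_\mu$, I would verify $[\mathfrak{G},\mathcal{S}]\subseteq\mathbf{g}$ summand by summand: for $X\in\mathfrak{g}_\nu$ and $Y\in\mathfrak{g}_\mu$ with $\mu\neq 1$, either $\nu\mu\neq 1$ and $[X,Y]\in\mathfrak{g}_{\nu\mu}\subseteq\mathcal{S}$, or $\nu\mu=1$ forcing $\nu=\mu^{-1}\neq 1$, so $X,Y\in\mathcal{S}$ and $[X,Y]\in\mathbf{g}$. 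A Jacobi-identity induction on bracket depth then lifts this to $[\mathfrak{G},\mathbf{g}]\subseteq\mathbf{g}$, giving (1).

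For part (2), assume $\GG=G\ltimes_\rho V$. First note that $\text{Ad}(x_{g_s})$ preserves the direct sum $\mathfrak{G}=\text{Lie}(G)\oplus V$ (acting as the usual adjoint on $\text{Lie}(G)$ and as $\rho(x_{g_s})$ on $V$, by \eqref{for:19}), so each eigenspace splits as $\mathfrak{g}_\mu=\mathfrak{g}_\mu^G\oplus V_\mu$ and $\mathbf{g}$ splits as $\mathbf{g}^G\oplus\mathbf{g}^V$, with $\mathbf{g}^V$ being the $d\rho(\mathbf{g}^G)$-submodule of $V$ generated by $\bigoplus_{\mu\neq 1}V_\mu$. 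Two preliminary facts are needed. Excellence implies each $\rho|_{G_i}$ is non-trivial; since $\mathfrak{g}_i$ is simple, $d\rho|_{\mathfrak{g}_i}$ is injective, so $d\rho$ is injective on $\text{Lie}(G)$ and $\ker\rho$ is discrete. As $x_{g_s}$ lies in a connected $\RR$-split subgroup and is non-trivial (partial hyperbolicity forces $x_{g_s}\neq e$, else the Cartan decomposition yields $U(1)$-spectrum on all of $\mathfrak{G}$), $\rho(x_{g_s})\neq I$. Secondly, part (1) applied inside $\text{Lie}(G)$ gives $\mathbf{g}^G=\bigoplus_{i\in I}\mathfrak{g}_i$ for the non-empty set $I:=\{i:x_i\notin Z(G_i)\}$.

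Set $G':=\prod_{i\in I}G_i$, so $\text{Lie}(G')=\mathbf{g}^G$; by connectedness, $\mathbf{g}^V$ is a $G'$-invariant subspace of $V$ containing $\bigoplus_{\mu\neq 1}V_\mu$. Excellence forces $V^{G'}=0$, so the trivial $G'$-representation does not appear in the $G'$-isotypic decomposition $V=\bigoplus_\pi V_\pi$, with $V_\pi\cong\pi\otimes M_\pi$. For each non-trivial $\pi$ appearing in $V$, the Lie algebra ideal $\ker(d\pi)\subseteq\text{Lie}(G')=\bigoplus_{i\in I}\mathfrak{g}_i$ is a proper sum $\bigoplus_{i\in J'}\mathfrak{g}_i$ of simple summands, $J'\subsetneq I$; the logarithm $H=\sum_{i\in I}H_i$ of $x_{g_s}$ has $H_i\neq 0$ for every $i\in I$ by the definition of $I$, so $H\notin\ker(d\pi)$ and $\pi(x_{g_s})\neq I$. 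Hence its non-$1$ eigenspace $K_\pi\subset\pi$ is non-zero. Because $x_{g_s}\in G'$, $\rho(x_{g_s})$ acts on $V_\pi$ as $\pi(x_{g_s})\otimes I_{M_\pi}$, so $V_\pi\cap\bigoplus_{\mu\neq 1}V_\mu=K_\pi\otimes M_\pi$. By $G'$-irreducibility of $\pi$, the $G'$-submodule generated by $K_\pi\otimes M_\pi$ is $\pi\otimes M_\pi=V_\pi$, so $V_\pi\subseteq\mathbf{g}^V$. Summing over $\pi$ yields $V\subseteq\mathbf{g}$.

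The main difficulty lies in this last step: one must ensure every $G'$-isotypic component of $V$ receives a non-zero contribution from $\bigoplus_{\mu\neq 1}V_\mu$. The clean resolution above hinges on the identification $\rho(x_{g_s})|_{V_\pi}=\pi(x_{g_s})\otimes I_{M_\pi}$, which is possible precisely because $x_{g_s}\in G'$; this aligns the eigenspace data with the tensor-product structure, and combines with the Lie-algebra ideal characterization of $\ker(d\pi)$ to rule out $\pi(x_{g_s})=I$.
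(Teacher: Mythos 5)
Your proof is correct and follows essentially the same route as the paper: part (1) via the grading $[\mathfrak{g}_{\mu_1},\mathfrak{g}_{\mu_2}]\subseteq\mathfrak{g}_{\mu_1\mu_2}$, and part (2) via complete reducibility of $V$ under the semisimple part of $\mathbf{g}$ together with excellence. The paper compresses the final step to ``by \eqref{for:19} each $V_i$ is contained in $\mathbf{g}$,'' whereas you explicitly verify that each irreducible (isotypic) component meets the non-$1$ eigenspaces of $\mathrm{Ad}(x_{g_s})$ --- via $x_{g_s}\in G'$, the ideal structure of $\ker(d\pi)$, and $\RR$-diagonalizability of $d\pi(H)$ --- which is a worthwhile filling-in of a detail the paper leaves implicit.
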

\begin{proof}
\eqref{for:71} follows directly form the fact that $[\mathfrak{g}_{\mu_1},\mathfrak{g}_{\mu_2}]\subseteq \mathfrak{g}_{\mu_1\mu_2}$. Let
$\GG'(s)$ be the connected subgroup with Lie algebra $\mathbf{g}$. The semisimple part of $\GG'(s)$ is an almost direct product of simple factors of the semisimple part of $\GG$. If $\GG=G\ltimes V$, by complete reducibility for representations of
semisimple groups, there is a decomposition
of
\begin{align*}
 V=\bigoplus_{i\in I}V_i,
\end{align*}
for restricted representation $\rho$ on $\GG'(s)$ such that $\rho$ is irreducible on each $V_i$. Since $\rho$ is excellent (see Section \ref{sec:5}), the restricted representation is non-trivial on each $V_i$. By \eqref{for:19} we see that each $V_i$ is contained in $\mathbf{g}$. Then we get \eqref{for:73} immediately.
\end{proof}
\begin{remark}
For a fixed partially hyperbolic element $a\in\GG$, let $\mathbf{g}$ be the subalgebra generated by all $\mathfrak{g}_\mu$, $\mu\neq 1$. Let
$\GG'=\GG'(a)$ be the connected subgroup with Lie algebra $\mathbf{g}$. $\GG'$ is dependent on $a$; furthermore, even if $b$ is close to $a$, $\GG'(b)$ is not necessarily to be equal to $\GG'(a)$. For example, if $\GG$ is semisimple and $a$ is inside a simple factor $G_1$ of $\GG$, then for $b$ close enough to $a$, $\GG'(b)$ can be any product of simple factors of $\GG$ containing $G_1$. But if we consider the subalgebra generated by the $\mathfrak{g}_\mu(b)$, for those $\mu$ sufficiently close to $\Delta(a)\setminus 1$, then it is still $\mathbf{g}$ (see Lemma \ref{le:3}).
\end{remark}
Next, we study the spectral space decomposition of elements sufficiently close to $a$. Suppose $b$ is sufficiently close to $a$. We will need to consider the eigenspaces for $\text{Ad}(x_{b})$ or $\text{Ad}(x_{g_b})$. In next section, we want to study the restricted representation $\pi$ on $\GG'$.  Since $b$ is probably not contained in $\GG'$ as explained in previous part, we need to consider the decomposition of $x_{b}$ or $x_{g_b}$ instead. We have a direct sum decomposition of
\begin{align*}
\text{Lie}(G)=\mathbf{g}_1\oplus \mathbf{g}_2.
\end{align*}
where $\mathbf{g}_1$ is the Lie algebra of the semisimple part of $\GG'$. Then we have the decomposition:
\begin{align*}
  x_{b}=x_{b,1}x_{b,2}\quad\text{ or }\quad x_{g_b}=x_{g_b,1}x_{g_b,2},
\end{align*}
where $x_{b,i}$ and $x_{g_b,i}$, $i=1,\,2$ is in the connected subgroup with Lie algebra $\mathbf{g}_i$. Note that $x_{b,i}$ or $x_{g_b,i}$, $i=1,\,2$ commute with $x_{a}$ or $x_{g_a}$ respectively. It is clear that $x_{b,2}$ or $x_{g_b,2}$ are sufficiently close to identity if $b$ is sufficiently close to $a$.

We have the eigenspace decomposition for $\text{Ad}(x_{b,1})$ or $\text{Ad}(x_{g_b,1})$:
\begin{align*}
\mathfrak{G}=\sum_{\mu\in\Delta'(b)}\mathfrak{l}_\mu.
\end{align*}
where $\Delta'(b)$ is the set of eigenvalues of $\text{Ad}(x_{b,1})$ or $\text{Ad}(x_{g_b,1})$ and $\mathfrak{l}_\mu$ is the eigenspace for eigenvalue $\mu$. Let $\Delta''(b)=\{\mu\in \Delta'(b): \mu\in (\nu-\epsilon,\nu+\epsilon)\text{ where }\nu\in \Delta(a)\backslash 1\}$ for sufficiently  small $\epsilon$. In fact, $\Delta''(b)$ excludes those eigenvalues sufficiently close to $1$. Then:
\begin{lemma}\label{le:3}
Let $\mathbf{g}'$ be the subalgebra generated by all $\mathfrak{l}_\mu$, $\mu\in \Delta''(b)$. Then $\mathbf{g}'$ is an ideal and $\mathbf{g}'=\mathbf{g}$.
\end{lemma}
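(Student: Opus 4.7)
The plan is to prove that $\mathbf{g}'$ and $\mathbf{g}$ are generated by the same linear subspace of $\mathfrak{G}$, which forces $\mathbf{g}'=\mathbf{g}$; the statement that $\mathbf{g}'$ is an ideal then follows immediately from Lemma \ref{le:2}\eqref{for:71}.

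First I would set up a simultaneous eigenspace decomposition. Since $x_a$ and $x_{b,1}$ both lie in (the connected subgroup with Lie algebra) $\mathbf{g}_1$, commute by construction, and are both $\RR$-semisimple, their adjoint actions on $\mathfrak{G}$ diagonalize simultaneously:
\[\mathfrak{G}=\bigoplus_{(\nu,\mu)}\mathfrak{G}_{\nu,\mu},\]
where $\mathfrak{G}_{\nu,\mu}$ is the joint eigenspace on which $\mathrm{Ad}(x_a)$ acts by $\nu$ and $\mathrm{Ad}(x_{b,1})$ by $\mu$. This refines both given decompositions, since $\mathfrak{g}_\nu(a)=\bigoplus_{\mu}\mathfrak{G}_{\nu,\mu}$ and $\mathfrak{l}_\mu=\bigoplus_{\nu}\mathfrak{G}_{\nu,\mu}$.

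Next I would exploit proximity. Because $x_{b,2}$ is close to the identity whenever $b$ is close to $a$, the $\RR$-semisimple factor $x_{b,1}$ is close to $x_a$, and therefore $\mathrm{Ad}(x_{b,1})$ is close to $\mathrm{Ad}(x_a)$ on $\mathfrak{G}$. Consequently, on any nonzero $\mathfrak{G}_{\nu,\mu}$ the eigenvalue $\mu$ must be arbitrarily close to $\nu$ once $b$ is sufficiently near $a$. Choosing $\epsilon$ smaller than both half the minimum spacing between distinct elements of $\Delta(a)$ and the distance from $1$ to $\Delta(a)\setminus\{1\}$ produces a bijective correspondence: the set $\Delta''(b)$ consists exactly of those $\mu$ close to a unique $\nu\in\Delta(a)\setminus\{1\}$, and no such $\mu$ can arise from a joint eigenspace with first index equal to $1$.

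Matching joint pieces then yields
\[\bigoplus_{\mu\in\Delta''(b)}\mathfrak{l}_\mu=\bigoplus_{\mu\in\Delta''(b),\,\nu}\mathfrak{G}_{\nu,\mu}=\bigoplus_{\nu\in\Delta(a)\setminus\{1\},\,\mu}\mathfrak{G}_{\nu,\mu}=\bigoplus_{\nu\in\Delta(a)\setminus\{1\}}\mathfrak{g}_\nu(a).\]
Because the two sides coincide as linear subspaces of $\mathfrak{G}$, the Lie subalgebras they generate coincide as well, so $\mathbf{g}'=\mathbf{g}$. The most delicate point in the argument is the continuity assertion $x_{b,1}\to x_a$ as $b\to a$: since the real Jordan decomposition is not continuous on $\GG$ in general, this has to be deduced from the explicit product splitting $x_b=x_{b,1}x_{b,2}$ built from $\mathrm{Lie}(G)=\mathbf{g}_1\oplus\mathbf{g}_2$ together with the smallness of $x_{b,2}$ asserted just above the lemma, rather than from an abstract continuity principle.
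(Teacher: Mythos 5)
Your strategy is genuinely different from the paper's. The paper first proves the ideal property directly from $[\mathfrak{l}_{\mu_1},\mathfrak{l}_{\mu_2}]\subseteq\mathfrak{l}_{\mu_1\mu_2}$ together with the observation that an eigenvalue far from $1$ times one close to $1$ stays far from $1$, and then identifies $\mathbf{g}'$ with $\mathbf{g}$ structurally: both are ideals with the same semisimple part, and in the semidirect-product case both contain $V$ by excellence of $\rho$ (Lemma \ref{le:2}). It never claims that the generating linear subspaces coincide. You instead aim at the stronger statement $\bigoplus_{\mu\in\Delta''(b)}\mathfrak{l}_\mu=\bigoplus_{\nu\in\Delta(a)\setminus\{1\}}\mathfrak{g}_\nu(a)$, from which everything would follow at once.

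The gap is precisely the step you flag as delicate, and your proposed repair does not close it. The assertion that $\mathrm{Ad}(x_{b,1})$ is close to $\mathrm{Ad}(x_a)$ as an operator is false, and smallness of $x_{b,2}$ cannot rescue it, because the discontinuity of the Jordan decomposition already occurs inside $\mathbf{g}_1$. Take $G=SL(3,\RR)$, $a=\diag(e^t,e^t,e^{-2t})\exp(sE_{12})$ with $t,s\neq 0$; then $a$ is partially hyperbolic, $x_a=\diag(e^t,e^t,e^{-2t})$, $n_a=\exp(sE_{12})$, and the nontrivial eigenspaces $\mathrm{span}(E_{13},E_{23})$, $\mathrm{span}(E_{31},E_{32})$ generate $\mathbf{g}=\mathbf{g}_1=\mathfrak{sl}(3,\RR)$, so $x_{b,2}=e$ identically. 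For $b=\diag(e^{t+\epsilon},e^{t-\epsilon},e^{-2t})\exp(sE_{12})$ the element $b$ is already $\RR$-semisimple, hence $x_{b,1}=x_b=b\to a\neq x_a$, and $\mathrm{Ad}(x_{b,1})E_{21}\to E_{21}+s(E_{11}-E_{22})-s^2E_{12}\neq E_{21}=\mathrm{Ad}(x_a)E_{21}$. So the inference ``$\mu$ is close to $\nu$ on each joint eigenspace because the two operators are close'' is unavailable. What is true, and what your argument actually needs, is only the clustering of eigenvalues: granting the commutation of $x_b$ with $x_a$ (which the text asserts but does not prove, and on which your joint diagonalization also rests), the operator $\mathrm{Ad}(b)\mathrm{Ad}(x_a)^{-1}$ converges to the quasi-unipotent $\mathrm{Ad}(k_an_a)$, so the moduli of its eigenvalues --- which by commutativity compute the ratios $\mu/\nu$ on the joint eigenspaces up to the contribution of $x_{b,2}$ --- tend to $1$. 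With that substitute your chain of equalities does go through, since the joint decomposition refines both $\{\mathfrak{g}_\nu(a)\}$ and $\{\mathfrak{l}_\mu\}$; note also that the individual $\mathfrak{l}_\mu$ degenerate as $b\to a$ (in the example, $\mathfrak{l}_{e^{3t-\epsilon}}$ collapses onto $\RR E_{13}=\mathfrak{l}_{e^{3t+\epsilon}}$), so only the cluster sums, not the individual eigenspaces, behave continuously. As written, the justification of your central step is incorrect.
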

\begin{proof} To prove that $\mathbf{g}'$ is an ideal it suffices to shows that for any $\mu_1\in \Delta''(b)$ and $\mu_2\in \Delta'(b)\backslash \Delta''(b)$,
$[\mathfrak{g}_{\mu_1},\mathfrak{g}_{\mu_2}]\subseteq \mathfrak{g}_{\mu'}$ for some $\mu'\in \Delta''(b)$. We note that $[\mathfrak{g}_{\mu_1},\mathfrak{g}_{\mu_2}]\subseteq \mathfrak{g}_{\mu_1\mu_2}$.  By assumption, $\mu_1\mu_2\in \Delta''(b)$ since $\mu_1\mu_2$ is ``far away" from $1$ by assumption. If $\GG=G$, then it is clear that $\mathbf{g}'=\mathbf{g}$. If $\GG=G\ltimes V$, $\mathbf{g}'=\mathbf{g}$ have the same semisimple part by previous arguments. Lemma \ref{le:2} shows that both $\mathbf{g}'$ and $\mathbf{g}$ contain $V$. Then the result follows immediately.
\end{proof}
Hence we have the decomposition:
\begin{align}\label{for:74}
 \mathfrak{G}=\mathbf{g}\oplus\mathbf{g}_2=\mathbf{g}\oplus\sum_{\mu\in\Delta'''(b)}\mathfrak{l}_\mu,
\end{align}
where $\Delta'''(b)\subset \Delta'(b)\backslash \Delta''(b)$. Note that elements in $\Delta'''(b)$ are sufficiently close to $1$.

For each $\mu\in\Delta'(b)$, set $X^\mu_1,\cdots,X^\mu_{\dim\mathfrak{l}_\mu}\in \mathfrak{G}_1$ to be a basis of
$\mathfrak{l}_\mu$. Above lemma shows that there exists $r(a)>0$ such that for any $b$ sufficiently close to $a$, $X^\mu_j$, where $\mu\in\Delta''(b)$, $1\leq j\leq \dim\mathfrak{l}_\mu$ as well as their commutators of length no larger than $r(a)$ span $\mathbf{g}$.

Let $p_j$ be polynomials with degree  no greater  than
$r(a)$ such that the linear span of the set
\begin{align}\label{for:4}
  \big\{p_j(X^{\mu_{l(1)}}_{j(1)},\cdots,X^{\mu_{l(i)}}_{j(i)}),\,X^\mu_t\big\}
\end{align}
where $\mu_{l(k)}\in\Delta''(b)$ for all $1\leq k\leq i$ and $1\leq t\leq\dim\mathfrak{l}_\mu$, $\mu\in \Delta''(b)$, generate $\mathbf{g}$.
Here we note that after substituting elements in $\mathbf{g}$, these $p_j$ take values in the universal enveloping algebra
$U(\mathbf{g})$.

\subsection{Conjugation in semidirect product}\label{sec:1} If $\GG=G\ltimes_\rho V$, we note that $I-\rho(g_b)$ restricted on the subspace $W=\sum_{\mu\in \Delta''(b)} \mathfrak{g}_\mu\bigcap V$ is invertible. Denote by $(I-\rho(g_b))^{-1}\mid_W$ the inverse map on $W$.  Then we have a decomposition:
\begin{align*}
  v_b=\underbrace{\sum_{\mu\in \Delta''(b)} v_{b,\mu}}_{v_{b}(1)}+\underbrace{\sum_{\mu\in\Delta'(b)\setminus\Delta''(b)} v_{b,\mu}}_{v_{b}(2)}
\end{align*}
where $v_{b,\mu}\in\mathfrak{g}_\mu\bigcap V$. Then $u=\rho(g_b)(v_{b}(1))\in W$. Set $u_b=-(I-\rho(g_b))^{-1}|_W(\rho(g_b)u)$ and $s=(g_b,u_b)$. Then by easy computation we have
\begin{align*}
  sbs^{-1}=\big(g_b,\rho(g_b)v_{b}'\big),
\end{align*}
where $\rho(g_b)v_{b}'\in \sum_{\mu\in\Delta'(b)\setminus\Delta''(b)} v_{b,\mu}\bigcap V$.

Note that the norms of such $s$ are uniformly bounded for $b$ sufficiently  close to $a$. By discussion at the beginning of Section \ref{section:linear} (we see that conjugation by $s$ doesn't affect the conclusions in Theorem \ref{th:6}), we can just assume that $b$ has the decomposition: $b=(g_b, v_b)$, $v_b\in \sum_{\mu\in\Delta'(b)\setminus\Delta''(b)} v_{b,\mu}$. Then we can write $b=(x_{g_b,1},0)(x_{g_b,2}k_{g_b}n_{g_b},v_b)$. Since the eigenvalues of $\text{Ad}(x_{g_b,1})$
on $\Delta'(b)\setminus\Delta''(b)$ are sufficiently close to $1$, we have
\begin{align}\label{for:47}
 \norm{\text{Ad}(x_{g_b,1}^j)(v_b)}\leq C(1+\epsilon)^{\abs{j}}\norm{v_b},\qquad \forall j\in\ZZ.
\end{align}
Using \eqref{for:19}, we have
\begin{align}\label{for:46}
 b^j&=\big(g_b^j, \sum_{i=0}^{j-1}\rho(g_b^{-i})v_b\big)=(x_{g_b,1}^j,0)\big(x^j_{g_b,2}k^j_{g_b}n^j_{g_b}, \sum_{i=0}^{j-1}\rho(g_b^{-i})v_b\big)\notag\\
 &=(x_{g_b,1}^j,0)\big(x^j_{g_b,2}k^j_{g_b}n^j_{g_b}, \sum_{i=0}^{j-1}\text{Ad}(g_b^{-i})v_b\big)
\end{align}
Let $y_{b,j}=x_{b,2}^jk_b^jn_b^j$ if $\GG=G$ or $y_{b,j}=\big(x^j_{g_b,2}k^j_{g_b}n^j_{g_b}, \sum_{i=0}^{j-1}\text{Ad}(g_b^{-i})v_b\big)$ if $\GG=G\ltimes_\rho V$. If there is no confusion, we abuse $x_{b,1}$ and $x_{g_b,1}$ for simplicity.

\subsection{Twisted cohomological stability}\label{sec:cohstability}

Fix a partially hyperbolic element $a\in\GG$. It is clear that $\pi$ is also a unitary representation for $\GG'(a)$. We use $(\pi',\,\mathcal{H})$ to denote the restricted representation. Define $r_0$ to be the minimal
positive integer satisfying:
\begin{align}\label{for:31}
&\chi\cdot\phi^{r_0}>1\text{ if }\phi>1,\quad\text{and}\quad\chi\cdot \phi^{r_0}<1 \text{ if }\phi<1
\end{align}
for any $\chi,\phi\in\Delta(a)$.

Let $m_0=\max\{r_0,r(a)\}$, where $r(a)$ is defined \eqref{for:4}. We list the following lemma  which is very important for the sequel.
\begin{lemma}\label{le:8}
Suppose $b\in\GG$ is sufficiently close to $a$. Also suppose $\lambda\in\CC$ with $\abs{\lambda}$ sufficiently close to $1$. Then:
\begin{enumerate}
 \item\label{for:64} For any $j\in\ZZ$, $v\in \mathcal{H}^1(\pi)$ and $u\in \mathcal{H}^1(\pi')$ we have
\begin{align*}
\abs{\langle\pi(b^j)v,u\rangle}\notag&\leq C(1+\epsilon)^{\abs{j}}(\abs{j}+1)^{\dim \mathfrak{G}}\big(\norm{v}\norm{u}+\norm{u}\norm{v}_{\GG,1}\\
&+\norm{v}\norm{u}_{\GG',1}+\norm{u}_{\GG',1}\norm{v}_{\GG,1}\big)e^{-\gamma\abs{j}l(b)},
\end{align*}
where $l(b)=\frac{1}{2}\sum_{\mu\in\Delta(b)}\abs{\log\mu}>0$, $\gamma>0$ is a constant only dependent on $\GG$ and $\epsilon\geq 0$ is sufficiently small.

\medskip
  \item \label{for:60} Suppose $v=\{v_j\}$, $j\in\ZZ$ is a sequence in $\mathcal{H}^1(\pi)$ satisfying $\norm{v_j}_{\GG,1}\leq P(\abs{j})$, where $P$ a polynomial, then
      \begin{align*}
       D_v^{\binom{+}{-}}(b)=\binom{-}{+}\sum_{\binom{j\geq0}{j\leq
-1}}\lambda^{-(j+1)}\pi(b^j)v_j
      \end{align*}
  are distributions in $\pi'$.

\medskip
 \item \label{for:61} Suppose $m>m_0+3$ and suppose $v=\{v_j\}$, $j\in\ZZ$ is a sequence in $\mathcal{H}^m(\pi)$ satisfying $\norm{v_j}_{\GG,s}\leq P_s(\abs{j})\norm{h}_{\GG,s}$, where $P_s$ are polynomials, $0\leq s\leq m$ and $h$ is a vector in $\mathcal{H}^m(\pi)$. There exists $\delta(m)>0$, such that for any $b\in \GG$ with $d(a,b)<\delta$, if $f_v\stackrel{\text{def}}{=}D_v^+(b)=D_v^-(b)$ as distributions in $\pi'$. Then $f_v\in \mathcal{H}^{m-m_0-3}(\pi')$ and the following
estimate holds
\begin{align*}
\norm{f_v}_{\GG',m-m_0-3}\leq C_{m,p_1,\cdots,P_m} \norm{h}_{\GG,m}.
\end{align*}

\end{enumerate}

\end{lemma}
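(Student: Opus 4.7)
For part \eqref{for:64}, the plan is to split $b^j$ into a ``hyperbolic-in-$\GG'$'' factor and a ``slow'' factor. Using the factorization $b^j=(x_{b,1}^j,0)\,y_{b,j}$ of Section \ref{sec:1}, I would rewrite
\[
\langle\pi(b^j)v,u\rangle=\langle\pi(x_{b,1}^j)\bigl(\pi(y_{b,j})v\bigr),\,u\rangle
\]
and apply Kleinbock--Margulis (Theorem \ref{th:2}) to the right-hand side as a matrix coefficient of $\pi$ restricted to the semisimple part of $\GG'$, which is a product of simple factors of $G$, so the spectral-gap hypothesis and Remark \ref{re:5} apply. This delivers $e^{-\gamma d(e,x_{b,1}^j)}\le e^{-\gamma\abs{j}l(b)+O(1)}$, while the required Lipschitz constants are $\norm{u}_{\GG',1}$ for $u$ and, for $\pi(y_{b,j})v$, obtained from
\[
\norm{\pi(g)\pi(y_{b,j})v-\pi(y_{b,j})v}=\norm{\pi(y_{b,j}^{-1}gy_{b,j})v-v}\le\norm{v}_{\GG,1}\cdot d(e,y_{b,j}^{-1}gy_{b,j})
\]
together with $\norm{\text{Ad}(y_{b,j}^{\pm 1})}\le C(1+\ep)^{\abs{j}}(\abs{j}+1)^{\dim\mathfrak{G}}$. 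The latter reflects polynomial growth of $n_b^j$, boundedness of $k_b^j$, the slow growth of $x_{g_b,2}^j$ (eigenvalues near $1$), and in the semidirect case the estimates \eqref{for:47}--\eqref{for:46} for the vector component.

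For \eqref{for:60}, applying \eqref{for:64} termwise to $D_v^+(b)$ against $u\in\mathcal{H}^1(\pi')$ and using $\norm{v_j}_{\GG,1}\le P(\abs{j})$ gives
\[
\sum_{j\ge 0}\abs{\lambda}^{-(j+1)}\bigl|\langle\pi(b^j)v_j,u\rangle\bigr|\le C(\norm{u}+\norm{u}_{\GG',1})\sum_{j\ge 0}\bigl(\abs{\lambda}^{-1}(1+\ep)e^{-\gamma l(b)}\bigr)^{j}(j+1)^{\dim\mathfrak{G}}P(j).
\]
Since $l(b)$ is bounded below for $b$ near the partially hyperbolic element $a$, choosing $\abs{\lambda}$ close to $1$ and $\ep$ small makes the geometric ratio strictly $<1$, the series converges, and $D_v^+(b)$ extends to a continuous functional on $\mathcal{H}^1(\pi')$, hence a distribution for $\pi'$; the argument for $D_v^-(b)$ is symmetric.

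For \eqref{for:61}, the strategy is subelliptic regularity (Theorem \ref{th:5}) on $\GG'$ applied to the spanning set \eqref{for:4}, whose step is at most $r(a)\le m_0$. For each generator $X\in\mathfrak{l}_\mu$ with $\mu\in\Delta''(b)$, the identity $d\pi(X)^{2n}\pi(b^j)=\pi(b^j)d\pi(\text{Ad}(b^{-j})X)^{2n}$ combined with the Jordan decomposition of $\text{Ad}(b)$ and the separation condition \eqref{for:31} yields
\[
\norm{d\pi(X)^{2n}\pi(b^j)v_j}\le C\,\mu^{-2nj}(1+\ep)^{2n\abs{j}}(\abs{j}+1)^{2n\dim\mathfrak{G}}P_{2n}(\abs{j})\norm{h}_{\GG,2n}.
\]
Choosing $D_v^+$ when $\mu>1$ and $D_v^-$ when $\mu<1$ makes $\mu^{\mp 2nj}$ decay on the summation range, so $d\pi(X)^{2n}f_v$ is an absolutely $\mathcal{H}$-convergent series with norm $\le C_{m,a}\norm{h}_{\GG,2n}$, provided $2n\le m$ and the smallness parameters $\abs{\lambda}-1$ and $\ep$ are chosen relative to $\max_\mu\{\mu,\mu^{-1}\}^{2n}$. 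Condition $(\mathfrak{B})$ on $\GG'$ lets me rewrite the bracket-type generators in \eqref{for:4} as polynomials in the hyperbolic ones, so Theorem \ref{th:5} applied to $L_{2n}=\sum d\pi(X_j)^{2n}$ places $f_v$ in $\mathcal{H}^{2n/r(a)-1}(\pi')$; matching $2n/r(a)-1=m-m_0-3$ fixes $n$ and accounts for the stated loss.

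The main obstacle will be the uniform bookkeeping in \eqref{for:61}: one must verify, uniformly in $b$ close to $a$, that the generators in \eqref{for:4} really span $\mathbf{g}$ with step at most $r(a)$ (Lemma \ref{le:3} is the tool), that every $\mu\in\Delta''(b)$ stays bounded away from $1$, and that the geometric ratios $\abs{\lambda}^{-1}\max_\mu\{\mu,\mu^{-1}\}^{2n}(1+\ep)e^{-\gamma l(b)}$ remain strictly $<1$ for $n$ up to $\sim m/2$---which is what forces the admissible perturbation radius $\delta(m)$ to shrink as $m$ grows.
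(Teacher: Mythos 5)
Your treatment of parts \eqref{for:64} and \eqref{for:60} matches the paper's proof essentially verbatim: the factorization $b^j=(x_{b,1}^j,0)\,y_{b,j}$, the application of Theorem \ref{th:2} to the restriction of $\pi'$ to the semisimple part of $\GG'$, the Lipschitz constants $\norm{u}_{\GG',1}$ and $\norm{\pi(y_{b,j})v}_{\GG',1}\leq C(1+\ep)^{\abs{j}}(\abs{j}+1)^{\dim\mathfrak{G}}\norm{v}_{\GG,1}$ via $Y\pi(y_{b,j})v=\pi(y_{b,j})(\mathrm{Ad}(y_{b,j}^{-1})Y)v$, and the lower bound $d(e,x_{b,1}^j)\geq C\abs{j}l(b)$ are all exactly the paper's steps, and the summation in \eqref{for:60} is routine from there.

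Part \eqref{for:61} is where your plan breaks down, for two independent reasons. First, Theorem \ref{th:5} is not available in this setting: it is stated (and true) only for the regular representation on $L^2(S/\Gamma)$ with $\Gamma$ cocompact, and the remark following it in the paper stresses that the subelliptic estimate is a local statement on manifolds whose globalization uses compactness of the quotient; it fails for a general unitary representation $(\pi,\mathcal{H})$, which is precisely the generality of Lemma \ref{le:8}. Avoiding subelliptic regularity is one of the stated purposes of the paper. Second, even if you could invoke it, the loss of regularity would be multiplicative: to land in $\mathcal{H}^{2n/r(a)-1}(\pi')$ with $2n/r(a)-1=m-m_0-3$ you need $2n\approx r(a)(m-m_0-2)$, which for $r(a)\geq 2$ exceeds $m$ once $m$ is large; but your hypotheses only control $\norm{v_j}_{\GG,s}$ for $s\leq m$, so the required bounds on $d\pi(X)^{2n}\pi(b^j)v_j$ with $2n>m$ are simply not available. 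The tame additive loss $m\mapsto m-m_0-3$ cannot be reached along this route.

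The paper's actual argument is structured to avoid exactly this. After establishing $\norm{(X_i^\mu)^sf_v}\leq C\norm{h}_{\GG,s}$ for each single hyperbolic direction up to order $s\leq m$ (which you have), it proves the \emph{mixed} estimates $\norm{(X^\chi_j)^s(X^\mu_i f_v)}\leq C\norm{h}_{\GG,s+1}$ for all $\chi\in\Delta'(b)$ --- using the $D_v^{+}$ or $D_v^{-}$ form according to the signs of $\chi$ and $\mu$ and the separation condition \eqref{for:31} --- and then applies the \emph{elliptic} regularity Theorem \ref{th:4} to conclude the full $\GG$-Sobolev bound $\norm{X^\mu_i f_v}_{\GG,s}\leq C\norm{h}_{\GG,s+2}$. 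Only then are the bracket-type generators $Y_j=p_j(X^{\mu_{l(1)}}_{j(1)},\dots)$ handled, by writing $Y_j^sf_v=Y_j^{s-1}p_j(\dots)f_v$ and absorbing the polynomial of degree $\leq r(a)\leq m_0$ into the already-controlled $\GG$-norms of $X^\mu_if_v$; this costs only the additive increment $m_0+1$. Goodman's Theorem $3.1$ of \cite{Goodman1} and Theorem \ref{th:4} then give $f_v\in\mathcal{H}^{m-m_0-2}(\pi')$ with an additive loss. Finally, the paper closes the loop by estimating $\norm{f_v}$ itself through pairing $f_v$ with its defining series and using part \eqref{for:64}; your proposal omits this step, leaving the $\norm{f_v}$ term in the elliptic estimate uncontrolled. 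You would need to replace your subelliptic step with this two-stage (single-direction, then mixed-direction) scheme to obtain the stated conclusion.
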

\begin{remark}
In \eqref{for:61}, $m$ is dependent on the closeness of $b$ and $a$ for general partially hyperbolic element $a$. If $\GG=G$ and $a$ is regular, i.e., $\chi(a)\neq0$ for any roots of $G$, then $m$ is independent on the closeness.
\end{remark}

\begin{proof} Let $G'$ denote the semisimple part of $\GG'$. If $\GG=G$, then it is clear that the restriction of $\pi'$ to any
simple factor of $G'$ is isolated from the trivial representation; if $\GG=G\ltimes_\rho V$, then it also follows from Theorem $1.2$ of \cite{Zhenqi23} that the restriction of $\pi'$ to any simple factor of $G'$ is isolated from the trivial representation. The above arguments justify the application of Theorem \ref{th:2}
for the restricted representation of $(\pi',\mathcal{H})$ on $G'$.

\medskip
\emph{\textbf{Proof of \eqref{for:64}}}. Recall notations in Section \ref{sec:1}. By Theorem \ref{th:2} for any $v\in \mathcal{H}^1(\pi)$ and $u\in\mathcal{H}^1(\pi')$  we have,
\begin{align}\label{for:54}
\abs{\langle\pi(b^j)v,u\rangle}&=\abs{\langle\pi'(x_{b,1}^j)(\pi(y_{b,j})v),u\rangle}\notag\\
&\leq
C(\norm{\pi(y_{b,j})v}\norm{u}+\delta_j\norm{u}+\delta\norm{\pi(y_{b,j})v_j}+\delta_j\delta)e^{-\gamma_1 d(e,x_{b,1}^j)}\notag\\
&=C(\norm{v}\norm{u}+\delta_j\norm{u}+\delta\norm{v}+\delta_j\delta)e^{-\gamma_1 d(e,x_{b,1}^j)}
\end{align}
where $\delta=\norm{u}_{\GG',1}$, $\delta_j=\norm{\pi(y_{b,j})v}_{\GG',1}$ and $\gamma_1>0$ is a constant only dependent on $\GG$.

Since $x_{b,1}$ is partially hyperbolic and conjugated to an element in $A_0$, then
\begin{align}\label{for:50}
d(e,x_{b,1}^j)\geq C\abs{j}l(b)
\end{align}
where $l(b)=\frac{1}{2}\sum_{\mu\in\Delta(b)}\abs{\log\mu}>0$. Here we use $\Delta(b)$ instead of $\Delta'(b)$ since $x_{b,1}$ is sufficiently close to $x_b$.

If $\GG=G$, since $k_b$ is compact, $n_b$ is unipotent, $x_{b_2}$ is sufficiently close to identity and they are commuting, then for any $Y\in \mathfrak{G}$ we have
\begin{align}\label{for:21}
  \norm{\text{Ad}(y_{b,j})Y}\leq C(1+\epsilon)^{\abs{j}}(\abs{j}+1)^{\dim \mathfrak{G}}\norm{Y},\qquad \forall j\in\ZZ,
\end{align}
where $\epsilon\geq0$ is sufficiently small.

If $\GG=G\ltimes_\rho V$, first, we note that in the expression of $y_{b,j}$ (see \eqref{for:46}),
\begin{align*}
 \text{Ad}(g_b^{-i})v_b=\text{Ad}(x^{-i}_{g_b,2}k^{-i}_{g_b}n^{-i}_{g_b}x_{g_b,1}^{-i})v_b.
\end{align*}
Then by using \eqref{for:47} and noting that $\text{Ad}(x_{g_b,2})$ is sufficiently close to identity, we see that estimates in \eqref{for:21} still hold. Note that in \eqref{for:50} and \eqref{for:21} we can take uniform $C$ for all $b$ sufficiently close to $a$.

Also note that for any $Y\in \mathfrak{G}$
\begin{align}\label{for:58}
  Y\pi(y_{b,j})v=\pi(y_{b,j})\big(\text{Ad}(y_{b,j}^{-1})Y\big)v.
\end{align}
Then it follows that
\begin{align}\label{for:41}
\delta_j\leq C(1+\epsilon)^{\abs{j}}(\abs{j}+1)^{\dim \mathfrak{G}}\norm{v}_{\GG,1}.
\end{align}
Then the estimate for $\abs{\langle\pi(b^j)v,u\rangle}$ follows directly from \eqref{for:54}, \eqref{for:50} and \eqref{for:41}.

\medskip
\emph{\textbf{Proof of \eqref{for:60}}}. It follows from previous result that
\begin{align*}
&\sum_{j=-\infty}^\infty\abs{\langle\lambda^{-(j+1)}\pi(b^j)v_j,u\rangle}=\sum_{j=-\infty}^\infty\abs{\lambda}^{-(j+1)}\cdot\abs{\langle\pi(b^j)v_j,u\rangle}\\
&\leq C\sum_{j=-\infty}^\infty\abs{\lambda}^{-(j+1)}(1+\epsilon)^{\abs{j}}(\norm{v_j}\norm{u}+\delta_j\norm{u}+\delta\norm{v_j}+\delta_j\delta)e^{-\gamma d(e,x_{b,1}^j)}\\
&\leq C\sum_{j=-\infty}^\infty\abs{\lambda}^{-(j+1)}(1+\epsilon)^{\abs{j}} (\abs{j}+1)^{\dim \mathfrak{G}}\norm{v_j}_{\GG,1}\norm{u}_{\GG',1}e^{-C\gamma l(b)\abs{j}}\\
&\leq C\sum_{j=-\infty}^\infty \abs{\lambda}^{-(j+1)}(1+\epsilon)^{\abs{j}}(\abs{j}+1)^{\dim \mathfrak{G}}P(\abs{j})\norm{u}_{\GG',1}e^{-C\gamma l(b)\abs{j}}\\
&<+\infty
\end{align*}
This shows that $D_v^{\binom{+}{-}}(b)$ are distributions in $\pi'$.
\medskip

\noindent\emph{\textbf{Proof of \eqref{for:61}}} We will show differentiability of $f_v$ by using both of its forms. First, we show differentiability of $f_v$ in $X^\mu_i$, $\mu\in\Delta''(b)$.

For any $X^\mu_i$, $1\leq i\leq \dim\mathfrak{l}_\mu$, if $\mu>1$, we may use the $D_v^{+}$ form to obtain the following bound on $s$'th derivative
\begin{align}\label{for:30}
&\sum_{j=0}^\infty (X^\mu_i)^s\big(\lambda^{-(j+1)} \pi(b^j)h\big)=\sum_{j=0}^\infty \lambda^{-(j+1)}\mu^{-js}\pi_1(x_{b,1}^j)(X^\mu_i)^s\big( \pi(y_{b,j})h\big)\notag\\
&=\sum_{j=0}^\infty \lambda^{-(j+1)}\mu^{-js}\pi_1(x_{b,1}^j)\big(\pi(y_{b,j})(Z_j)^s h\big)
\end{align}
where $Z_j=\textrm{Ad}(y_{b,j}^{-1})(X^\mu_i)$ for all $1\leq s\leq m$.

By using \eqref{for:21} we see that the left-hand  side of
\eqref{for:30} converges absolutely in $\mathcal{H}$ with estimates
\begin{align}\label{for:37}
\norm{(X^\mu_i)^sD_v^+}&\leq \sum_{j=0}^\infty \mu^{-js}\norm{\lambda^{-(j+1)}\pi_1(x_{b,1}^j)\big(\pi(y_{b,j})(Z_j)^s v_j\big)}\notag\\
&=\sum_{j=0}^\infty \mu^{-js}\abs{\lambda}^{-(j+1)}\norm{(Z_j)^s v_j}\notag\\
&\leq \sum_{j=0}^\infty C\mu^{-js}\abs{\lambda}^{-(j+1)}(1+\epsilon)^{\abs{j}}(\abs{j}+1)^{\dim \mathfrak{G}}P_s(\abs{j})\norm{h}_{\GG,s}\notag\\
&\leq C_{s,P_s}\norm{h}_{\GG,s}.
\end{align}
Similarly, if $\mu<1$ using the form $f_v=D_v^{-}$,
the estimates
\begin{align}\label{for:22}
\norm{(X^\mu_i)^sD_v^-}\leq C_{s,P_s}\norm{h}_{\GG,s}.
\end{align}
holds if $1\leq s\leq m$.

Before we show differentiability in other directions, we obtain tame estimates for $X^\mu_i f_v$, $\mu\in\Delta''(b)$ instead, which is important for that purpose.

Fix $X^\mu_i$, $\mu\in\Delta''(b)$, $1\leq i\leq \dim\mathfrak{g}_\mu$. \eqref{for:37} and \eqref{for:22} show that
\begin{align}\label{for:24}
\norm{(X^\mu_i)f_v}\leq C_{P_1}\norm{h}_{\GG,1}.
\end{align}
If $\mu>1$, for any $X^\chi_j$ with $\chi\in \Delta''(b)$ and $\chi>1$ or $\chi\in\Delta'(b)\backslash\Delta''(b)$, we have
\begin{align*}
(X^\chi_j)^s(X^\mu_iD_v^+)=-\sum_{\ell=0}^\infty \lambda^{-(\ell+1)}(\chi^{s}\mu)^{-\ell}\pi_1(x_{b,1}^\ell)\big(\pi(y_{b,\ell})(Z_\ell)^{s} Y_\ell v_\ell\big)
\end{align*}
where  $Z_\ell=\textrm{Ad}(y_{b,\ell}^{-1})(X^\chi_j)$ and $Y_\ell=\textrm{Ad}(y_{b,\ell}^{-1})(X^\mu_i)$ for all $1\leq s\leq m-1$.

Since $\chi$ is succinctly close to $1$ if $\chi\in\Delta'(b)\backslash\Delta''(b)$, this together with \eqref{for:21} shows that the estimates
\begin{align}\label{for:23}
\norm{(X^\chi_j)^s(X^\mu_iD_v^+)}\leq C_{s,P_{s+1}} \norm{h}_{\GG,s+1}.
\end{align}
holds if $1\leq s\leq m-1$ for any $X^\chi_j$ with $\chi>1$ or $\chi\in\Delta'_b\backslash\Delta''(b)$.

For any $X^\chi_j$, $\chi\in \Delta''(b)$ with $\chi<1$, since $b$ is sufficiently to $a$, similar to \eqref{for:31} we also have $\chi^{s}\mu<1$ for any $r_0\leq s$. Then  we have
\begin{align*}
(X^\chi_j)^s(X^\mu_iD_v^-)=\sum_{\ell=-1}^{-\infty} \lambda^{-(\ell+1)}(\chi^{s}\mu)^{-\ell}\pi_1(x_{b,1}^\ell)\big(\pi(y_{b,\ell})(Z_\ell)^{s} Y_\ell v_\ell\big),
\end{align*}
for $r_0\leq s\leq m-1$, where $Z_\ell=\textrm{Ad}(y_{b,\ell}^{-1})(X^\chi_j)$ and $Y_\ell=\textrm{Ad}(y_{b,\ell}^{-1})(X^\mu_i)$.

Together with \eqref{for:21} it follows that
\begin{align}\label{for:25}
\norm{(X^\chi_j)^s(X^\mu_iD_v^-)}\leq C_{s, P_{s+1}} \norm{h}_{\GG,s+1}.
\end{align}
if $r_0\leq s\leq m-1$ for any $X^\chi_j$ with $\chi\in \Delta''(b)$ and $\chi<1$.

Above estimates and Theorem \ref{th:4} imply that $X^\mu_iD_v\in \mathcal{H}^{m-2}$ of $\pi$ with estimates:
 \begin{align}\label{for:55}
\norm{X^\mu_if_v}_{\GG,s}\leq C_{s, P_1,P_{s+2}}\norm{h}_{\GG,s+2}.
\end{align}
for any $r_0\leq s\leq m-2$.

If $\mu<1$, by using the form $D_v^-$ we see that \eqref{for:23} also holds if $1\leq s\leq m-1$ for any $X^\chi_j$ with $\chi\in \Delta''(b)$ and $\chi<1$ or $\chi\in\Delta'(b)\backslash\Delta''(b)$. Furthermore, by using the form $D_v^+$ we see that \eqref{for:25} also holds if $1\leq s\leq m-1$ for any $X^\chi_j$ with $\chi\in \Delta''(b)$ and $\chi>1$. Hence, \eqref{for:55} also follows for the case of $\mu<1$.

Now we are ready to show the differentiability of $f_v$ in other directions of $\mathbf{g}$.
Let $Y_j=p_j(X^{\mu_{l(1)}}_{j(1)},\cdots,X^{\mu_{l(i)}}_{j(i)})$, $1\leq j\leq
r(a)$ (see \eqref{for:4}). We note that
\begin{align*}
  Y_j^sf_v= Y_j^{s-1}p_j(X^{\mu_{l(1)}}_{j(1)},\cdots,X^{\mu_{l(i)}}_{j(i)})f_v,
\end{align*}
and $\mu_{l(k)}\neq 1$ for any $1\leq k\leq i$. Then by using \eqref{for:55} we obtain:
\begin{align}\label{for:52}
 \norm{Y_j^sf_v}\leq C_{s,P_1,P_{s+m_0+1}}\norm{h}_{\GG,s+m_0+1}
\end{align}
for any $1\leq s\leq m-m_0-1$.

Using \eqref{for:37}, \eqref{for:22} and \eqref{for:52}  it follows from Theorem $3.1$ of \cite{Goodman1} that $f_v\in \mathcal{H}^{m-m_0-2}$ in $\pi'$; furthermore, Theorem \ref{th:4} shows that
\begin{align}\label{for:56}
  \norm{f_v}_{\GG',s}\leq C_{s,P_1,\cdots,P_{s+m_0+2}}\norm{h}_{\GG,s+m_0+2}+C_s\norm{f_v}.
\end{align}
If $1\leq s\leq m-m_0-2$.

Especially, let $s=1$ we have
\begin{align}\label{for:57}
  \norm{f_v}_{\GG',1}\leq C_{P_1,\cdots,P_{m_0+3}}\norm{h}_{\GG,m_0+3}+C\norm{f_v}.
\end{align}

It it fairly clear what one has to do now. We show how to get the estimate of $\norm{f_v}$. By using \eqref{for:54} and \eqref{for:21} we have
\begin{align*}
\norm{f_v}^2&=\big|
\sum_{j=0}^{+\infty}\lambda^{-(j+1)}\pi(b^j)v_j,f_v\rangle\big|\leq
\sum_{j=0}^{+\infty}\big|\langle \pi(b^j)v_j,f_v\rangle\big|\\
&\leq
C_{P_1}\norm{h}_{\GG,1}(\norm{f_v}+\norm{f_v}_{\GG',1}).
\end{align*}
Together with \eqref{for:57} we get
\begin{align*}
\norm{f_v}\leq C_{P_1,\cdots,P_{m_0+3}}\norm{h}_{\GG,m_0+3}.
\end{align*}
Combined with \eqref{for:56}, we obtain
\begin{align*}
  \norm{f_v}_{\GG',s}\leq C_{s,P_1,\cdots,P_{s+m_0+2}}\norm{h}_{\GG,s+m_0+2}+C_{s,P_1,\cdots,P_{m_0+3}}\norm{h}_{\GG,m_0+3}
\end{align*}
if $1\leq s\leq m-m_0-2$. Hence we finish the proof.

\end{proof}

We are now in a position to proceed with the proof of Theorem \ref{th:6}.

\subsection{Proof of Theorem \ref{th:6}} \emph{\textbf{Proof of \eqref{for:3}}}. It clear that $h\in \mathcal{H}^1$ by assumption. From \eqref{for:60} of Lemma \ref{le:8} we see that $\sum_{j=-\infty}^{+\infty}\lambda^{-(j+1)} \pi(a^j)h$ is a distribution in $\pi$. By using $h=\pi(a)f-\lambda f$ we have
\begin{align*}
  \sum_{j=-\infty}^{+\infty}\lambda^{-(j+1)} \pi(a^j)h&=\sum_{j=-\infty}^{+\infty}\lambda^{-(j+1)} \pi(a^j)\big(\pi(a)f-\lambda f\big)\\
  &\overset{\text{(1)}}{=}\sum_{j=-\infty}^{+\infty}\lambda^{-(j+1)} \pi(a^{j+1})f-\sum_{j=-\infty}^{+\infty}\lambda^{-j} \pi(a^j)f\\
  &=0,
\end{align*}
where $(1)$ follows from the fact that $\sum_{j=-\infty}^{+\infty}\lambda^{-(j+1)} \pi(a^j)f$ is also a distribution in $\pi$. Then we finish the proof.

\medskip
\noindent\emph{\textbf{Proof of \eqref{for:33}}}. The assumption that $\sum_{j=-\infty}^{+\infty}\lambda^{-(j+1)} \pi(b^j)h=0$ as a distribution in $\pi$ implies that $\sum_{j=-\infty}^{+\infty}\lambda^{-(j+1)} \pi(b^j)h=0$ as a distribution in $\pi'$ (since $\mathcal{H}^{\infty}(\pi)\subset \mathcal{H}^{\infty}(\pi')$). Then the equation $\rho(b)f-\lambda f=h$ has two distributional solutions
\begin{align*}
       f=D_h^{\binom{+}{-}}=\binom{-}{+}\sum_{\binom{j\geq0}{j\leq
-1}}\lambda^{-(j+1)}\pi(b^j)h
      \end{align*}
in $\pi'$ by \eqref{for:60} of Lemma \ref{le:8}.

From \eqref{for:61} of Lemma \ref{le:8} we see that $f\in\mathcal{H}^{m-m_0-3}$ of $\pi'$ with the following estimates
\begin{align}\label{for:59}
\norm{f}_{\GG',m-m_0-3}\leq C_{m} \norm{h}_{\GG,m}.
\end{align}
Next, we will show how to obtain differentiability along directions outside $\mathbf{g}$.  For any $X^\mu_i$, $\mu\in\Delta'''(b)$ (see \eqref{for:74}), $1\leq i\leq \dim\mathfrak{g}_\mu$ and $1\leq s\leq m-1$, set $Y_j=\textrm{Ad}(y_{b,j}^{-1})(X^\mu_i)$, $j\in\ZZ$. From \eqref{for:21}  we have
\begin{align*}
  \norm{(Y_j)^s h}_{\GG,t}\leq C(1+\epsilon)^{\abs{j}}(\abs{j}+1)^{s\dim \mathfrak{G}}\norm{h}_{\GG,s+t}.
\end{align*}
all $0\leq s+t\leq m-1$.

From \eqref{for:60} of Lemma \ref{le:8} we see that
\begin{align*}
 (X_i^\mu)^sf&=-\sum_{j=0}^\infty \lambda^{-1}(\lambda\mu^s)^{-j}\pi_1(b^j)((Y_j)^s h)=\sum_{j=-1}^{-\infty} \lambda^{-1}(\lambda\mu^s)^{-j}\pi_1(b^j)((Y_j)^s h)
 \end{align*}
are distributions in $\pi'$ for any $0\leq s\leq m-2$ if $\mu$ is sufficiently close to $1$, i.e., $b$ is sufficiently close to $a$. Furthermore, \eqref{for:61} of Lemma \ref{le:8} implies that $(X_i^\mu)^sf\in\mathcal{H}$ with the estimate
\begin{align*}
 \norm{(X_i^\mu)^sf}\leq C_{a,s}\norm{h}_{s+m_0+3}.
\end{align*}
for any $s\leq m-m_0-3$.

This together with \eqref{for:59} imply that $f\in\mathcal{H}^{m-m_0-3}$ with the estimate
\begin{align*}
 \norm{f}_{\GG,m-m_0-3}\leq C_{a,m}\norm{h}_{m}
\end{align*}
from Theorem \ref{th:4}. Hence we finish the proof.

\medskip
\noindent\emph{\textbf{Proof of \eqref{for:35}}}. We just need to show that if $\mathcal{D}(h)=0$ for any $(\rho(a)-\lambda I)$-invariant distribution $\mathcal{D}\in \mathcal{H}^{-1}$, then $\sum_{j=-\infty}^{+\infty}\lambda^{-(j+1)} \pi(a^j)h=0$ as a distribution. Note that for any $f\in\mathcal{H}^\infty$, $D_f:=\sum_{j=-\infty}^{+\infty}\bar{\lambda}^{-(j+1)}\pi(a^{-j})f\in \mathcal{H}^{-1}$ by \eqref{for:60} of Lemma \ref{le:8}. Furthermore, for any $g\in \mathcal{H}^1$
\begin{align*}
 &\sum_{j=-\infty}^{+\infty}\big\langle \pi(a)g-\lambda g,\bar{\lambda}^{-(j+1)}\pi(a^{-j})f\big\rangle\\
 &=
 \sum_{j=-\infty}^{+\infty}\big\langle \lambda^{-(j+1)}\pi(a^{j+1})g-\lambda^{-j}\pi(a^{j})g,f\big\rangle\\\
 &=0.
\end{align*}
This shows that $D_f\in \mathcal{H}^{-1}$ is $(\rho(a)-\lambda I)$-invariant. On the other hand, if $D_f(h)=0$ for any $f\in \mathcal{H}^\infty$, then
\begin{align*}
 &0=\sum_{j=-\infty}^{+\infty}\big\langle h,\bar{\lambda}^{-(j+1)}\pi(a^{-j})f\big\rangle=\sum_{j=-\infty}^{+\infty}\big\langle \lambda^{-(j+1)}\pi(a^{-(j+1)})h,f\big\rangle.
\end{align*}
This shows that $\sum_{j=-\infty}^{+\infty}\lambda^{-(j+1)} \pi(a^j)h=0$ as a distribution. This proves the result.

\subsection{Cohomological equation for partially hyperbolic flow} Recall that for a $C^1$ cocycle $\beta$, if $A=\RR^k$ the \emph{infinitesimal generator} of $\beta$ is defined by
\begin{align}\label{for:72}
\vartheta(\nu,\beta)=\frac{d}{dt}\beta(\exp t\nu)\bigl|_{t=0}
\end{align}
The cocycle identity and commutativity of $A$ imply that $\vartheta$ is a closed $1$-form on
the $A$-orbits in $X$. We can also recover $\beta$ from $\vartheta$ by
\begin{align*}
    \beta(\exp X)=\int_{0}^1\vartheta(X,\beta)\cdot \exp tXdt
\end{align*}
Thus, if $A=\RR^k$ we can restrict our attention to infinitesimal version of the cohomology equation
$\vartheta=\eta-dH$, where $\eta$ is another infinitesimal generator of a smooth cocycle and $H$ is the transfer function. Therefore
a cocycle $\beta$ is cohomologous to a constant cocycle if the associated $1$-form $\vartheta$ is exact and the problem of finding which
cocycle is cohomologous to a  trivial one boils down to the problem of determining
which closed $1$-form on the orbit foliation is exact. In fact, this point of view is
the most useful for our purposes.

For the cohomological equation $\mathfrak{v}f=h$ where $\mathfrak{v}\in\mathfrak{G}$ is a partially hyperbolic element, i.e., the spectrum of Ad$(\exp(t\mathfrak{v}))$ on $\mathfrak{G}$ is not contained in $U(1)$, we get results similar to the discrete-action cases.
\begin{corollary}\label{cor:3}
Suppose $(\pi,\,\mathcal{H})$ is a unitary representation  of $\GG$ such that the restriction of $\pi$ to any
simple factor of $G$ is isolated from the trivial representation (in the Fell topology) if $\GG=G$; or $\pi$ contains no non-trivial $V$-fixed vectors if $\GG=G\ltimes_\rho V$.  If $\mathfrak{v}\in\mathfrak{G}$ is partially hyperbolic, then:
\begin{enumerate}
\item if $f\in \mathcal{H}^1$ is a solution of the equation: $\mathfrak{v}f=h$, then
\begin{align*}
 \int_{-\infty}^\infty\pi(\exp(t\mathfrak{v}))hdt=0
\end{align*}
as a distribution.

\medskip
  \item  there exist constants $m_0>\sigma>1$ (only depending on $\GG$) such that for any $m\geq m_0$, exist $\delta(m)>0$, such that for any $\mathfrak{u}\in \mathfrak{G}$ with $\norm{\mathfrak{v}-\mathfrak{u}}<\delta$, if $h\in \mathcal{H}^m$ satisfying $\int_{-\infty}^\infty\pi(\exp(t\mathfrak{u}))hdt=0$ as a distribution, then the equation $\mathfrak{u} f=h$
have a solution $f\in\mathcal{H}^{m-\sigma}$ and the following
estimate holds
\begin{align*}
\norm{f}_{m-\sigma}\leq C_{m} \norm{h}_{m}.
\end{align*}

\medskip
 \item if $h\in H^\infty$ and $\mathcal{D}(h)=0$ for any $\mathfrak{v}$-invariant distribution $\mathcal{D}\in \mathcal{H}^{-1}$, then the cohomological equation $\mathfrak{v}f=h$, has a solution $f\in \mathcal{H}^\infty$.

\end{enumerate}

\end{corollary}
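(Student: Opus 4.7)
The plan is to mirror the proof of Theorem \ref{th:6} step by step, replacing the discrete orbit sums $\sum_{j} \lambda^{-(j+1)}\pi(b^j)$ by one-parameter integrals $\int_{\RR} \pi(\exp(t\mathfrak{u}))\,dt$. First, following Section \ref{sec:2}, I decompose the one-parameter subgroup $\exp(t\mathfrak{v})$ using the infinitesimal Jordan decomposition $\mathfrak{v}=Z+X+Y$ into commuting compact, $\RR$-semisimple, and nilpotent parts, so that $\exp(t\mathfrak{v})=\exp(tZ)\exp(tX)\exp(tY)$, and in the semidirect product case apply the conjugation trick from Section \ref{sec:1} so that the $V$-component of $\mathfrak{v}$ lies in the sum of eigenspaces with eigenvalues bounded away from $1$. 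Partial hyperbolicity of $\mathfrak{v}$ ensures $X\neq 0$ and $l(\mathfrak{v}):=\tfrac{1}{2}\sum_{\mu\in\Delta(\mathfrak{v})}|\log\mu|>0$, where now $\Delta(\mathfrak{v})$ is the set of eigenvalues of $\mathrm{ad}(X)$ (or $\mathrm{ad}(X_{g_{\mathfrak{v}}})$). As in the discrete case, I pass to the restricted representation $(\pi',\mathcal{H})$ of $\GG'(\mathfrak{v})$, the connected subgroup whose Lie algebra is generated by the non-neutral eigenspaces of $\mathrm{ad}(X)$.

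The main work is the flow analog of Lemma \ref{le:8}. Applying Theorem \ref{th:2} to $\pi'$ restricted to the semisimple part of $\GG'(\mathfrak{v})$ and using that $d(e,\exp(tX))\geq C|t|\,l(\mathfrak{v})$, I obtain, for $\mathfrak{u}$ sufficiently close to $\mathfrak{v}$,
\begin{align*}
|\langle\pi(\exp(t\mathfrak{u}))v,u\rangle|
&\leq C\,(1+|t|)^{\dim\mathfrak{G}} e^{\epsilon|t|}\bigl(\norm{v}\norm{u}+\norm{u}\norm{v}_{\GG,1}\\
&\quad+\norm{v}\norm{u}_{\GG',1}+\norm{v}_{\GG,1}\norm{u}_{\GG',1}\bigr)e^{-\gamma|t|\,l(\mathfrak{u})},
\end{align*}
with $\epsilon$ small, because the compact and nilpotent factors produce only polynomial growth in $t$ and the small-eigenvalue part $X_{2}$ of the spectral decomposition of $\mathfrak{u}$ produces an arbitrarily slow exponential. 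This yields absolute convergence of $D_v^{+}(\mathfrak{u})=-\int_0^\infty\pi(\exp(t\mathfrak{u}))v(t)\,dt$ and $D_v^{-}(\mathfrak{u})=\int_{-\infty}^0\pi(\exp(t\mathfrak{u}))v(t)\,dt$ as distributions on $\pi'$, for sequences $v(t)$ of polynomial growth. Differentiating under the integral in an unstable direction $X^\mu_i$ produces a factor $\mu^{-t}$ (i.e.\ $e^{-t\log\mu}$) which, for $\mu>1$, is killed by the positive half-line integral; symmetrically for $\mu<1$ on the negative half-line. This gives the integral analog of the tame regularity estimates \eqref{for:37}, \eqref{for:22}, \eqref{for:55} and then, via the polynomial generators $p_j$ and Theorems \ref{th:4} and the extension argument of \cite{Goodman1}, the bound $\norm{f_v}_{\GG',s}\leq C_{m}\norm{h}_{\GG,s+m_0+2}$ for $s\leq m-m_0-2$.

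With this lemma in hand the three conclusions follow exactly as in the discrete case. For (1), if $\mathfrak{v}f=h$ with $f\in\mathcal{H}^1$, then $\pi(\exp(t\mathfrak{v}))h=\tfrac{d}{dt}\pi(\exp(t\mathfrak{v}))f$, and the integral $\int_{-\infty}^\infty\pi(\exp(t\mathfrak{v}))h\,dt$ exists as a distribution by the flow analog of Lemma \ref{le:8}\eqref{for:60} and equals the distributional limit $\lim_{T\to\infty}\pi(\exp(T\mathfrak{v}))f-\lim_{T\to-\infty}\pi(\exp(T\mathfrak{v}))f=0$, since the same matrix coefficient decay shows both limits vanish on smooth vectors of $\pi'$. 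For (2), vanishing of the obstruction distribution makes $D_h^+(\mathfrak{u})$ and $D_h^-(\mathfrak{u})$ coincide, giving a common solution to $\mathfrak{u}f=h$ in $\pi'$, and tame regularity in directions outside $\mathbf{g}$ is recovered exactly as in the final step of the proof of Theorem \ref{th:6}\eqref{for:33}, using that for such directions the eigenvalue of $\mathrm{ad}(X_{\mathfrak{u},1})$ is close to $0$ so the integrals converge on both half-lines. For (3), one repeats the duality argument: for any $f\in\mathcal{H}^\infty$ the distribution $D_f:=\int_{-\infty}^\infty\pi(\exp(-t\mathfrak{v}))f\,dt$ is $\mathfrak{v}$-invariant, and $D_f(h)=0$ for all such $f$ forces $\int\pi(\exp(t\mathfrak{v}))h\,dt=0$ as a distribution.

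The main obstacle, just as in the proof of Theorem \ref{th:6}, is the continuous control of the neutral directions. Specifically, one must show that when $\mathfrak{u}$ is only close to $\mathfrak{v}$ the eigenvalues of $\mathrm{ad}(X_{\mathfrak{u},1})$ on $\Delta'''(\mathfrak{u})$ and the norms $\norm{\mathrm{Ad}(y_{\mathfrak{u},t})}$ are uniformly dominated (up to the acceptable factor $(1+\epsilon)^{|t|}$) by the hyperbolic decay $e^{-\gamma|t|\,l(\mathfrak{u})}$. This is where the choice of threshold $m_0$ and the bound on $\delta(m)$ enter, and where the fact that $\mathfrak{v}$ need not be regular forces the loss of regularity $\sigma$ to grow with the closeness parameter, exactly as in the remark following Lemma \ref{le:8}.
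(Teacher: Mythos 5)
Your proposal follows the same route as the paper: the paper's own proof of this corollary consists precisely of setting up the infinitesimal analogue of the decomposition of Section \ref{sec:2} (commuting compact, $\RR$-split and nilpotent summands of $\mathfrak{u}$), normalizing the $V$-component by conjugation, writing the two formal solutions as half-line integrals, and then invoking the proof scheme of Lemma \ref{le:8} and Theorem \ref{th:6}. Your filling-in of the flow analogue of Lemma \ref{le:8} (decay of $\abs{\langle\pi(\exp(t\mathfrak{u}))v,u\rangle}$, differentiation under the integral producing $e^{-ts\log\mu}$ factors killed on the appropriate half-line, recovery of the remaining directions via the polynomials $p_j$ and the elliptic estimates) matches the intended argument, as do your derivations of (1) by telescoping, (2) by coincidence of $D_h^{+}$ and $D_h^{-}$, and (3) by the duality argument.

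There is one concrete error in your setup of the semidirect product case: you state that the conjugation of Section \ref{sec:1} arranges for the $V$-component of $\mathfrak{v}$ to lie in the sum of eigenspaces with eigenvalues \emph{bounded away from} $1$. It is the opposite. The operator $d\rho(x_{\mathfrak{u}})$ (resp.\ $I-\rho(g_b)$ in the discrete case) is invertible exactly on the hyperbolic part $W=\sum_{1\neq\mu}\mathfrak{g}_\mu\cap V$, so the conjugation \emph{removes} that part and leaves $v_{\mathfrak{u}}$ in the $1$-eigenspace (resp.\ in the eigenspaces with eigenvalues close to $1$). This direction matters: if the translation part sat in the hyperbolic eigenspaces, then $\norm{\mathrm{Ad}(\exp(tx_{\mathfrak{u}}))v_{\mathfrak{u}}}$ would grow like $e^{c\abs{t}}$ with $c$ bounded below, the analogue of \eqref{for:47} would fail, and the bound $\norm{\mathrm{Ad}(\exp(ty_{\mathfrak{u}}))Y}\leq C(1+\epsilon)^{\abs{t}}(1+\abs{t})^{\dim\mathfrak{G}}\norm{Y}$ on which all of your subsequent estimates rest would be false. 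Since the rest of your argument implicitly uses the correct normalization (sub-exponential growth of the non-split factor), this is a local fix rather than a structural flaw, but as written the normalization step is wrong.
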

\begin{proof}
If $\GG=G$,  for any $\mathfrak{u}\in \mathfrak{G}$, it has the
Iwasawa decomposition
\begin{align*}
\mathfrak{u}=k_{\mathfrak{u}}+ x_{\mathfrak{u}}+n_{\mathfrak{u}}
\end{align*}
for $3$ commuting elements, where $k_{\mathfrak{u}}$ is compact, $x_{\mathfrak{u}}$ is in a $\RR$-split Cartan algebra, $n_{\mathfrak{u}}$ is nilpotent.

For partially hyperbolic $\mathfrak{u}$, $x_{\mathfrak{u}}$ is non-trivial. If $\GG=G\ltimes_\rho V$ for any $\mathfrak{u}\in \mathfrak{G}$, we have the decomposition
$\mathfrak{u}=g_{\mathfrak{u}}+v_{\mathfrak{u}}$, where $g_{\mathfrak{u}}\in\text{Lie}(G)$ and $v_{\mathfrak{u}}\in V$. For partially hyperbolic $\mathfrak{u}$,  $x_{g_{\mathfrak{u}}}$ is non-trivial.

For partially hyperbolic $\mathfrak{u}$, similar to \eqref{for:5} we consider the eigenspace decomposition of
$\mathfrak{G}$ for $\text{Ad}(\exp(x_{\mathfrak{u}}))$ or $\text{Ad}(\exp(x_{g_{\mathfrak{u}}}))$:
\begin{align*}
\mathfrak{G}=\sum_{\mu\in\Delta(\mathfrak{u})}\mathfrak{g}_\mu.
\end{align*}
If $\GG=G\ltimes_\rho V$, for $\mathfrak{u}=g_{\mathfrak{u}}+v_{\mathfrak{u}}$, we have a decomposition of $v_{\mathfrak{u}}=\sum_{\mu\in \Delta(a)} v_{\mathfrak{u},\mu}$, where $v_{\mathfrak{u},\mu}\in\mathfrak{g}_\mu\bigcap V$. Set $u=\sum_{1\neq\mu\in\Delta(\mathfrak{u})}v_{\mathfrak{u},\mu}$.
We note that $d\rho(g_{\mathfrak{u}})$ restricted on the subspace $W=\sum_{1\neq\mu\in \Delta(a)} \mathfrak{g}_\mu\bigcap V$ is invertible. Set $v=(d\rho(g_{\mathfrak{u}}))^{-1}(\sum_{1\neq\mu\in\Delta(\mathfrak{u})}v_{\mathfrak{u},\mu})$. By using \eqref{for:19} we get
\begin{align*}
  v\exp(\mathfrak{u})v^{-1}=\big(g_{\mathfrak{u}},v_{\mathfrak{u},1}\big).
\end{align*}
Hence we can just assume that $v_{\mathfrak{u}}\in \mathfrak{g}_1$. Then we can write
\begin{align*}
 \mathfrak{u}=x_{\mathfrak{u}}+(k_{\mathfrak{u}}+n_{\mathfrak{u}}+v_{\mathfrak{u}})
\end{align*}
as $2$ commuting elements. Set $y_{\mathfrak{u}}=k_{\mathfrak{u}}+n_{\mathfrak{u}}$ if $\GG=G$ or $y_{\mathfrak{u}}=k_{\mathfrak{u}}+n_{\mathfrak{u}}+v_{\mathfrak{u}}$ if $\GG=G\ltimes_\rho V$. Then we have
\begin{align*}
  \exp(t\mathfrak{u})=\exp(tx_{\mathfrak{u}})\cdot \exp(ty_{\mathfrak{u}}),\qquad \forall t\in\RR.
\end{align*}
We note that the formal solutions to the equation $\mathfrak{u} f=h$ are:
\begin{align*}
       f^{\binom{+}{-}}=\binom{-}{+}\int_{\binom{t\geq0}{t\leq0}}\pi(\exp(t\mathfrak{u}))hdt.
      \end{align*}
Then we can follow the proof scheme of Lemma \ref{le:8} and Theorem \ref{th:6} to obtain the results.

\end{proof}

\section{Twisted cocycle rigidity}
\subsection{Higher rank trick and trivialization of cohomology}
Now  we will  show that  in the  higher rank case  obstructions  to solving the cocycle equation:
\begin{align}\label{for:62}
 \pi(a_1)f-\lambda_1 f=\pi(b_1)h-\lambda_2 h
\end{align}
vanish. Here we assume that $\lambda_1,\,\lambda_2\in U(1)$ and $a_1$ and $b_1$ are commuting partially hyperbolic elements.
The reason for that  is the commutation relation
\eqref{for:62} means that  the pair $f,\,h$
form a twisted cocycle over the homogeneous action generated by $a_1$ and $b_1$. Joint  solvability of the  cocycle equations for commuting elements  means that this  cocycles is  a coboundary, hence corresponding  twisted first cohomology is trivial. The key ingredient in the proof is the ``higher rank trick'' that proves vanishing of the obstructions in both equations:
\begin{align*}
&\pi(a_1) p-\lambda_1p=h \notag\\
&\pi(b_1)p-\lambda_2p=f
\end{align*}
if the pair pair $f,\,h$ satisfy condition \eqref{for:62}. It
appears in virtually identical form in all proofs of cocycle and differentiable rigidity for actions of higher rank abelian groups that  use some form  of dual, i.e. harmonic analysis arguments.
For its earliest appearance see  Lemmas 4.3,  4.6 and 4.7 in \cite{Spatzier1}. Recall notations in Section \ref{sec:2}.
\begin{lemma}\label{le:9}
If $ab=ba$ in $\GG$, then:
\begin{enumerate}
  \item \label{for:43}if $\GG=G$, then $a^mb^j=x_a^mx_b^jk_a^mn_a^mk_b^jn_b^j$, for any $m,\,j\in\ZZ$.

  \medskip
  \item \label{for:44} if $\GG=G\ltimes_\rho V$ and $v_a$ is in the $1$-weight space for $\text{Ad}(x_{g_a})$, then $v_b$ is also in the $1$-weight space for $\text{Ad}(x_{g_b})$; and
      \begin{align*}
       a^mb^j=x_{g_a}^mx_{g_b}^j(k_{g_a}n_{g_a}, v_a)^m(k_{g_b}n_{g_b}, v_b)^n,\qquad \forall m,\,j\in\ZZ.
      \end{align*}

      \medskip
\item \label{for:45} suppose $x_a$ and $x_b$ (resp. $x_{g_a}$ and $x_{g_b}$) are linear independent and $\lambda_1,\,\lambda_2\in U(1)$.  Also suppose $h=\{h_{n,j}\}$, $n,\,j\in\ZZ$ is a sequence in $\mathcal{H}^1(\pi)$ satisfying $\norm{h_{n,j}}_{\GG,1}\leq P(\abs{j},\,\abs{n})$, where $P$ a polynomial, then
\begin{align}\label{for:40}
  \sum_{n=-\infty}^{\infty}\sum_{j=-\infty}^{\infty}\lambda_1^{-(j+1)}\lambda_2^{-(n+1)}\pi(b^na^j)h_{n,j}
\end{align}
is a
distribution
      \end{enumerate}
\end{lemma}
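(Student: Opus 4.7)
My plan is to unpack the commutation relations via Jordan decomposition, and then apply the one-variable decay estimate of Lemma \ref{le:8} \eqref{for:64} in both variables simultaneously, using the linear-independence hypothesis on $x_a$ and $x_b$ to dominate the polynomial growth of $\{h_{n,j}\}$. For part \eqref{for:43}, I invoke uniqueness of the decomposition \eqref{for:9}: commutativity $ab = ba$ forces the three Jordan-type components of $a$ to commute with all three components of $b$, and within each element the three factors $k, x, n$ already commute by construction. Thus all six factors pairwise commute (up to the finite centre, which is absorbed by passing to a power as in Section \ref{sec:2}), so $(k_a x_a n_a)^m (k_b x_b n_b)^j$ freely rearranges into the claimed formula.

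For part \eqref{for:44}, the multiplication law \eqref{for:10} translates $ab = ba$ into $g_a g_b = g_b g_a$ in $G$, together with the identity $(\rho(g_b^{-1}) - I)v_a = (\rho(g_a^{-1}) - I)v_b$ in $V$. By part \eqref{for:43}, $x_{g_a}$ and $x_{g_b}$ commute and jointly diagonalise $V = \bigoplus_{\nu, \mu} V_{\nu, \mu}$. The hypothesis $v_a \in \bigoplus_\mu V_{1, \mu}$ places the left-hand side of the identity in the $1$-weight space of $x_{g_a}$; projecting the right-hand side onto $V_{\nu, \mu}$ with $\nu \neq 1$ gives $(\nu^{-1}\rho(k_{g_a}^{-1} n_{g_a}^{-1}) - I) v_b^{(\nu, \mu)} = 0$, and invertibility of this operator (its eigenvalues have modulus $\nu^{-1} \neq 1$) forces $v_b^{(\nu, \mu)} = 0$. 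A symmetric projection onto the $\mu \neq 1$ components using the $1$-weight of $x_{g_b}$ then yields $v_b$ in the $1$-weight space of $\text{Ad}(x_{g_b})$ (and simultaneously places $v_a$ in the same joint weight space). The product formula follows by verifying that under this joint weight condition, the four factors $(x_{g_a}, 0)$, $(x_{g_b}, 0)$, $(k_{g_a} n_{g_a}, v_a)$, $(k_{g_b} n_{g_b}, v_b)$ pairwise commute via the multiplication rule \eqref{for:10}.

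For part \eqref{for:45}, I combine the factored product formulas of \eqref{for:43} and \eqref{for:44} with the decay estimate of Lemma \ref{le:8} \eqref{for:64}. The key observation is that $x_a^j x_b^n$ is $\RR$-semisimple and conjugate into $A_0$, so Theorem \ref{th:2} applies, and mimicking the derivation of \eqref{for:54}--\eqref{for:41} in the proof of \eqref{for:64} yields the bound
\begin{align*}
\abs{\langle \pi(b^n a^j) h_{n,j}, u \rangle} \leq C (1+\epsilon)^{\abs{n}+\abs{j}} (\abs{n}+\abs{j}+1)^{2\dim \mathfrak{G}} \norm{h_{n,j}}_{\GG,1} \norm{u}_{\GG',1} e^{-\gamma d(e, x_a^j x_b^n)}
\end{align*}
for any $u \in \mathcal{H}^1(\pi')$. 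Linear independence of $x_a, x_b$ (equivalently of the Lie-algebra elements $X_a, X_b$ in the split Cartan) yields $d(e, x_a^j x_b^n) \geq c(\abs{j} + \abs{n})$ for some $c > 0$, so the exponential factor dominates the polynomial growth $P(\abs{j}, \abs{n})$ coming from the hypothesis on $\norm{h_{n,j}}_{\GG,1}$, and the double sum \eqref{for:40} converges absolutely when paired with any $u \in \mathcal{H}^1(\pi')$, exhibiting \eqref{for:40} as a distribution for $\pi'$.

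The main obstacle is the uniform linear lower bound $d(e, x_a^j x_b^n) \geq c(\abs{j} + \abs{n})$: this is exactly where the linear independence of $x_a$ and $x_b$ enters, ruling out degeneracy of $jX_a + nX_b$ along any direction in $\ZZ^2$ and providing the two-parameter hyperbolicity needed to absorb all the polynomial and $(1+\epsilon)^{\abs{n}+\abs{j}}$ factors. Once this lower bound is in hand, the bookkeeping for the double series parallels the one-variable estimate of Lemma \ref{le:8} \eqref{for:64} term by term, and no additional harmonic-analytic input beyond Theorem \ref{th:2} is required.
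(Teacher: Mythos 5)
Your treatments of parts \eqref{for:43} and \eqref{for:45} follow essentially the same route as the paper: commutativity of all Jordan-type factors via the generalized-eigenspace argument, and then the two-parameter decay estimate with the lower bound $d(e,x_a^jx_b^n)\geq c(\abs{j}+\abs{n})$ coming from linear independence (the paper makes this explicit by homogeneity plus compactness of the $\ell^1$-sphere in $\RR^2$, using that the roots separate points of the Cartan). Those parts are fine, modulo the usual level of detail.

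The genuine gap is in part \eqref{for:44}, in the step you call a ``symmetric projection.'' Your first projection is correct: writing the commutation identity as $(\rho(g_b^{-1})-I)v_a=(\rho(g_a^{-1})-I)v_b$ and projecting onto $V_{\nu,\mu}$ with $\nu\neq 1$, the operator $\rho(g_a^{-1})-I$ restricted there has spectrum on the circle of radius $\nu^{-1}\neq 1$ and is invertible, so $v_b^{(\nu,\mu)}=0$. But this only places $v_b$ in the $1$-weight space of $x_{g_a}$, whereas the claim concerns the $1$-weight space of $x_{g_b}$. To kill the remaining components $v_b^{(1,\mu)}$ with $\mu\neq 1$ you would have to invert the operator appearing on the \emph{right-hand} side of the identity restricted to $V_{1,\mu}$, namely $\rho(g_a^{-1})-I=\rho(k_{g_a}^{-1}n_{g_a}^{-1})-I$ there, whose spectrum lies on the unit circle and which can have nontrivial kernel; the invertible operator $\mu^{-1}\rho(k_{g_b}^{-1}n_{g_b}^{-1})-I$ sits on the side of $v_a$, so the identity only determines $v_a^{(1,\mu)}$ in terms of $v_b^{(1,\mu)}$, not conversely. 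The roles of $a$ and $b$ are not symmetric here, because the hypothesis constrains $v_a$ relative to $x_{g_a}$ only. Indeed, if $w\in V_{1,\mu}$ ($\mu\neq1$) is fixed by $\rho(g_a)$, then $a=(g_a,0)$ and $b=(g_b,w)$ commute, $v_a=0$ satisfies the hypothesis, yet $v_b=w$ is not in the $1$-weight space of $x_{g_b}$. What is actually true, and what the paper uses, is that $a$ and $b$ can be \emph{simultaneously conjugated} (by a single $s\in\GG$, cf.\ Proposition 4.4 of \cite{Zhenqi0}) so that both $v_a$ and $v_b$ land in the joint $1$-weight space; equivalently, one can conjugate by $(e,w)$ with $w$ chosen using the invertibility of $\mu^{-1}\rho(k_{g_b}^{-1}n_{g_b}^{-1})-I$ on $\bigoplus_{\mu\neq1}V_{1,\mu}$, which removes the offending components of $v_b$ while keeping $v_a$ in the $1$-weight space of $x_{g_a}$. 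Without this conjugation step your argument does not establish the product formula in \eqref{for:44}, since the commutation of $(x_{g_b},0)$ with $(k_{g_b}n_{g_b},v_b)$ requires precisely $v_b$ to lie in the $1$-weight space of $x_{g_b}$.
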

\begin{proof}
\noindent\emph{\textbf{Proof of \eqref{for:43}}}: Let $c$ stand for $a$ or $b$. The adjoint map $\text{Ad}(c)$ decomposes as
\begin{align*}
 \text{Ad}(c)=\text{Ad}(k_c)\text{Ad}(x_c)\text{Ad}(n_c),
\end{align*}
where $\text{Ad}(k_c)$ is compact, $\text{Ad}(x_c)$ is in a $\RR$-semisimple, $\text{Ad}(n_c)$ is unipotent. We claim that $\text{Ad}(x_a)$ commute with $\text{Ad}(k_b)$, $\text{Ad}(x_b)$ and $\text{Ad}(n_b)$. Indeed,
we can consider the decomposition of $\mathfrak{G}$ into generalized
eigenspaces for the action of $\text{Ad}(a)$. By assumption
$\text{Ad}(x_a)$ is a scalar multiple of identity on each generalized eigenspace of $\text{Ad}(a)$.
Since $b$ commutes with $a$, $\text{Ad}(x_b)$ preserves each generalized eigenspace of $\text{Ad}(a)$
and acts by a scalar multiple in each, which implies the claim.

It is clear that $x_an_b=n_bx_a$, $x_ax_b=x_bx_a$ and $x_ak_b=k_bx_az$, where $z\in Z(G)$, the center of $G$. Their exists $l\in\NN$ such that
$k_bx_a^lk_b^{-1}=x_a^l$ by noting that $Z(G)$ is finite. This shows that $\text{Ad}(k_b)\log x_a=\log x_a$. Then it follows that
$x_a$ and $k_b$ commute. Then we finish the proof.

\medskip
\noindent\emph{\textbf{Proof of \eqref{for:44}}}: By Proposition  $4.4$ of \cite{Zhenqi0} there is $s\in\GG$ such that
\begin{align*}
 scs^{-1}=(g_c,v_c'),\qquad c=a,\text{ or }b
\end{align*}
where $v_c'$ is in the $1$-weight spaces both for $\text{Ad}(x_{g_a})$ and $\text{Ad}(x_{g_b})$. This and previous result imply the conclusion immediately.

\medskip
\noindent\emph{\textbf{Proof of \eqref{for:45}}}: If $\GG=G$ it follows from \eqref{for:43} that
\begin{align*}
 d(b^na^j,e)\geq C\sum_{\mu\in\Delta(b^na^j)}\abs{\log\mu},\qquad \forall j,\,n\in\ZZ.
\end{align*}
For any $j,n\in\ZZ$ we note that
\begin{align*}
  \sum_{\mu\in\Delta(b^na^j)}\abs{\log\mu}=\sum_{\substack{\mu_l\in\Delta(a),\\\lambda_l\in\Delta(b)}}\abs{j\log\mu_l+n\log\lambda_l}
\end{align*}
where $\mu_l$ and $\lambda_l$ are eigenvalues of $\text{Ad}(x_{a})$ and $\text{Ad}(x_{b})$ on $\mathfrak{G}$ respectively for the same eigenvectors. Write
\begin{align*}
\sum_{\substack{\mu_l\in\Delta(a),\\\lambda_l\in\Delta(b)}}\abs{j\log\mu_l+n\log\lambda_l}
=(\abs{j}+\abs{n})\sum_{\substack{\mu_l\in\Delta(a),\\\lambda_l\in\Delta(b)}}\abs{j_1\log\mu_l+j_2\log\lambda_l}
\end{align*}
where $j_1=\frac{j}{\abs{j}+\abs{n}}$ and
$j_2=\frac{n}{\abs{j}+\abs{n}}$.

Since $x_a$ and $x_b$ are linearly  independent elements,
\begin{align*}
  c_0=\min_{\substack{\abs{r_1}+\abs{r_2}=1\\(r_1,r_2)\in\RR^2}}\sum_{\substack{\mu_l\in\Delta(a),\\\lambda_l\in\Delta(b)}}\frac{1}{2}
  \abs{r_1\log\mu_l+r_2\log\lambda_l}>0.
\end{align*}
Hence we have
\begin{align*}
  d(b^na^j,e)\geq Cc_0(\abs{j}+\abs{n}).
\end{align*}
If $\GG=G$, for any $u\in \mathcal{H}^\infty$ by Theorem \ref{th:2}, we have
\begin{align*}
&\sum_{n=-\infty}^{\infty}\sum_{j=-\infty}^{\infty}\big|\langle
\lambda_1^{-(j+1)}\lambda_2^{-(n+1)}\pi(b^na^j)h_{n,j},u\rangle\big|\\
&=\sum_{n=-\infty}^{\infty}\sum_{j=-\infty}^{\infty}\abs{\langle
\pi(b^na^j)h_{n,j},u\rangle}\\
&\leq
\sum_{n=-\infty}^{\infty}\sum_{j=-\infty}^{\infty}C\norm{h_{n,j}}_{1}\norm{u}_{1}e^{-\gamma
c_0 (\abs{n}+\abs{j})}\\
&\leq
\sum_{n=-\infty}^{\infty}\sum_{j=-\infty}^{\infty}CP(\abs{j},\,\abs{n})\norm{u}_{1}e^{-\gamma
c_0 (\abs{n}+\abs{j})}<\infty,
\end{align*}
where $\gamma$ is a constant only dependent on $G$. This shows that \eqref{for:40} is a distribution.

If $\GG=G\ltimes_\rho V$, denote by $y_c=(k_{g_c}n_{g_c},v_c)$; and for simplicity, denote $x_{g_c}$ by $x_c$, where $c$ stands for $a$ or $b$. By  arguments at the beginning of the proof of Lemma \ref{le:8} and Theorem \ref{th:2} for any $u\in\mathcal{H}^\infty$ we have
\begin{align*}
\abs{\langle\pi(b^na^j)h,u\rangle}&\overset{\text{(1)}}{=}\big|\langle\pi(x_{b}^nx_{a}^j)(\pi(y_{b}^ny_{a}^j)h_{n,j}),u\rangle\big|\notag\\
&\leq C\big(\norm{\pi(y_{b}^ny_{a}^j)h_{n,j}}\norm{u}+\norm{u}\norm{\pi(y_{b}^ny_{a}^j)h_{n,j}}_{1}\\
&+\norm{\pi(y_{b}^ny_{a}^j)h_{n,j}}\norm{u}_{1}+\norm{u}_{1}\norm{\pi(y_{b}^ny_{a}^j)h_{n,j}}_{1}\big)e^{-\gamma_1d(x_{b}^nx_{a}^j,e)}\\
&=C\big(\norm{h_{n,j}}\norm{u}+\norm{u}\norm{\pi(y_{b}^ny_{a}^j)h_{n,j}}_{1}\\
&+\norm{h}\norm{u}_{1}+\norm{u}_{1}\norm{\pi(y_{b}^ny_{a}^j)h_{n,j}}_{1}\big)e^{-\gamma d(x_{b}^nx_{a}^j,e)},
\end{align*}
where $\gamma$ is a constant only dependent on $G$. Here in $(1)$ we used \eqref{for:44} of Lemma \ref{le:9}.

By using \eqref{for:19} it follows that for any $Y\in \mathfrak{G}$ we have
\begin{align*}
  \norm{\text{Ad}(y_{b}^ny_{a}^j)Y}\leq C(\abs{j}+1)^{\dim \mathfrak{G}}(\abs{n}+1)^{\dim \mathfrak{G}}\norm{Y},\qquad \forall j\in\ZZ.
\end{align*}
Also note that for any $Y\in \mathfrak{G}$
\begin{align*}
  Y\pi(y_{b}^ny_{a}^j)h_{n,j}=\pi(y_{b}^ny_{a}^j)\big(\text{Ad}(y_{a}^{-j}y_{b}^{-n})Y\big)h_{n,j}.
\end{align*}
Hence we get
\begin{align*}
\norm{\pi(y_{b}^ny_{a}^j)h_{n,j}}_{1}\leq C(\abs{j}+1)^{\dim \mathfrak{G}}(\abs{n}+1)^{\dim \mathfrak{G}}\norm{h_{n,j}}_{1}.
\end{align*}
Then the estimate for $\abs{\langle\pi(a^j)v,u\rangle}$ follows directly from \eqref{for:54}, \eqref{for:50} and \eqref{for:41}.

By arguments similar to the case of $\GG=G$, there is $c>0$ such that
\begin{align*}
 d(x_{b}^nx_{a}^j,e)\geq c(\abs{j}+\abs{n}).
\end{align*}
Then it follows that for any $n,\,j\in\ZZ$
\begin{align*}
\abs{\langle\pi(b^na^j)h,u\rangle}&\leq C(\abs{j}+1)^{\dim \mathfrak{G}}(\abs{n}+1)^{\dim \mathfrak{G}}\big(\norm{h_{n,j}}\norm{u}+\norm{u}\norm{h_{n,j}}_{1}\\
&+\norm{h_{n,j}}\norm{u}_{1}+\norm{u}_{1}\norm {h}_{1}\big)e^{-c\gamma (\abs{j}+\abs{n})}\\
&\leq C(\abs{j}+1)^{\dim \mathfrak{G}}(\abs{n}+1)^{\dim \mathfrak{G}}P(\abs{j},\,\abs{n})\norm{u}_1e^{-c\gamma (\abs{j}+\abs{n})}
\end{align*}
The above estimates imply that \eqref{for:40} is a distribution. Hence we finish the proof.

\end{proof}

To prove Theorem \ref{th:9}, we also need the following result:
\begin{fact}\label{fact:1}
 If $b\in\GG$ is partially hyperbolic and $\lambda\in U(1)$ and the equation $\pi(b)f-\lambda f=0$ has a solution $f\in \mathcal{H}^1(\pi)$ then $f=0$.
\end{fact}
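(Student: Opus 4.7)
The plan is to iterate the eigenvalue equation and then contradict the resulting constant modulus matrix coefficient with the exponential decay already established in Lemma \ref{le:8}\eqref{for:64}. Concretely, if $\pi(b)f-\lambda f=0$ then $\pi(b^j)f=\lambda^j f$ for every $j\in\ZZ$, and pairing with $f$ gives
\[
\langle \pi(b^j)f,f\rangle=\lambda^j\norm{f}^2,
\]
so that $\abs{\langle \pi(b^j)f,f\rangle}=\norm{f}^2$ is independent of $j$.

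On the other hand, I would apply Lemma \ref{le:8}\eqref{for:64} with $a=b$ (so $b$ is trivially sufficiently close to $a$) and $v=u=f$. Since $\mathfrak{G}'(b)\subseteq\mathfrak{G}$, any basis of $\mathfrak{G}'(b)$ extends to one of $\mathfrak{G}$, whence $\mathcal{H}^1(\pi)\subseteq\mathcal{H}^1(\pi')$ and $\norm{f}_{\GG',1}\leq\norm{f}_{\GG,1}$. The lemma then yields
\[
\abs{\langle \pi(b^j)f,f\rangle}\leq C(1+\epsilon)^{\abs{j}}(\abs{j}+1)^{\dim\mathfrak{G}}\bigl(\norm{f}+\norm{f}_{\GG,1}\bigr)^{2}e^{-\gamma\abs{j}\, l(b)},
\]
where $\epsilon>0$ can be chosen arbitrarily small.

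Because $b$ is partially hyperbolic, $l(b)>0$. Choosing $\epsilon$ small enough that $\gamma\, l(b)-\log(1+\epsilon)>0$, the right-hand side tends to $0$ as $\abs{j}\to\infty$, while the left-hand side is the $j$-independent constant $\norm{f}^{2}$. This forces $\norm{f}^{2}=0$, hence $f=0$.

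I do not anticipate a serious obstacle: once Lemma \ref{le:8}\eqref{for:64} is in place, the conclusion follows from the classical observation that pairing an eigenvector with itself under iteration produces a matrix coefficient of constant modulus along an element for which such coefficients must decay exponentially (the representation being isolated from the trivial one on each simple factor, by assumption or by Remark \ref{re:5}). The only minor technicality is to confirm that the $\mathcal{H}^{1}$-regularity hypothesis for $\pi$ transfers to the restricted representation $\pi'$, which is immediate from the inclusion of Lie algebras.
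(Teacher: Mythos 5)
Your proof is correct and follows essentially the same route as the paper: iterate the eigenvalue relation to get $\pi(b^j)f=\lambda^j f$ and then invoke the exponential decay of matrix coefficients from Lemma \ref{le:8}\eqref{for:64} to force the pairing to vanish. The only cosmetic difference is that you pair $\pi(b^j)f$ with $f$ itself (getting a constant-modulus coefficient that must decay), whereas the paper pairs with an arbitrary $u\in\mathcal{H}^1(\pi)$ and concludes $\langle f,u\rangle=0$ by density; both are instances of the same argument.
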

\begin{proof}
 Since $\pi(b^j)f=\lambda^{j}f$ for any $j\in\ZZ$, by using estimates obtained in \eqref{for:64} of Lemma \ref{le:8}, for any $u\in \mathcal{H}^1(\pi)$ we get
\begin{align*}
    \langle f,\,u\rangle=\lim_{j\rightarrow\infty}\langle \lambda^{-j}\pi(b^j)f,\,u\rangle=0
\end{align*}
Then $f=0$ follows immediately.
\end{proof}
\subsection{Proof of Theorem \ref{th:9}} From the equation \eqref{for:62} we get
\begin{align*}
&\sum_{j=-n}^{j=n}\lambda_1^{-(j+1)}\pi(b_1a_1^j)h-\sum_{j=-n}^{j=n}\lambda_2\lambda_1^{-(j+1)}\pi(a_1^j)h\\
&=\lambda_1^{-(n+1)}\pi(a_1^{n+1})f-\lambda_1^{n}\pi(a_1^{-n})f.
\end{align*}
By \eqref{for:64} of Lemma \ref{le:8}, the right-hand converges to  $0$  as a distribution (in $\pi$) when
$n\rightarrow \infty$. Hence  we see that
\begin{align}\label{for:40}
\lambda_2^{-1}\sum_{j=-\infty
}^{+\infty}\lambda_1^{-(j+1)}\pi(b_1a_1^j)h=\sum_{j=-\infty
}^{+\infty}\lambda_1^{-(j+1)}\pi(a_1^j)h
\end{align}
as distributions. This shows that
\begin{align*}
\lambda_2^{-k}\sum_{j=-\infty
}^{+\infty}\lambda_1^{-(j+1)}\pi(b_1^ka_1^j)h=\sum_{j=-\infty
}^{+\infty}\lambda_1^{-(j+1)}\pi(a_1^j)h
\end{align*}
as distributions for any $k\in\ZZ$.

 On the other hand, by iterating equation \eqref{for:40}
we obtain:
\begin{align*}
&\sum_{n=-\infty
}^{+\infty}\sum_{j=-\infty
}^{+\infty}\lambda_2^{-(n+1)}\lambda_1^{-(j+1)}\pi(b_1^na_1^j)h=\sum_{n=-\infty
}^{+\infty}\lambda_2^{-1}\sum_{j=-\infty
}^{+\infty}\lambda_2^{-n}\lambda_1^{-(j+1)}\pi(b_1^na_1^j)h\notag\\
&=
\sum_{n=-\infty
}^{+\infty}\lambda_2^{-1}\sum_{j=-\infty
}^{+\infty}\lambda_1^{-(j+1)}\pi(a_1^j)h
\end{align*}
Since the series in the left hand side of above equation is a distribution by \eqref{for:45} of Lemma \ref{le:9}, it forces
$\sum_{j=-\infty
}^{+\infty}\lambda_1^{-(j+1)}\pi(a_1^j)h$ to be a $0$
distribution.  This is   the ``higher rank trick''!

By \eqref{for:33} of Theorem \ref{th:6} each equation of
\begin{align}\label{eq:7}
&\pi(a_1)p-\lambda_1p=h \notag\\
&\pi(b_1)p-\lambda_2p=f
\end{align}
has a $\mathcal{H}^{m-\sigma}$ solution. Moreover, we will show that they coincide. In the following proof, to simply notation, for any $g\in \GG$ and $\lambda\in \CC$, we define the linear operator $F(g,\lambda)$ on $\mathcal{H}$:
\begin{align*}
  F(g,\lambda)v=\pi(g)v-\lambda v, \qquad \forall \,v\in\mathcal{H}.
\end{align*}
If $p$ solves the first equation, i.e.
$F(a_1,\lambda_1)p=h$ then by equation \eqref{for:62} we
have
\begin{align*}
F(b_1,\lambda_2)\circ F(a_1,\lambda_1)p=F(b_1,\lambda_2)h=F(a_1,\lambda_1)f
\end{align*}
Since operators $F(a_1,\lambda_1)$ and $F(b_1,\lambda_2)$
commute this implies
\begin{align*}
F(a_1,\lambda_1)\big(F(b_1,\lambda_2)p-f\big)=0
\end{align*}
By Fact \ref{fact:1} $F(b_1,\lambda_2)p=f$ by noting that $p\in \mathcal{H}^1$, which is implied by assumption  $m\geq m_0+\sigma$ and $m_0>1$. Therefore $\pi(b_1)p-\lambda_2p=f$ i.e.,  $p$ solves the second equation as well.

\subsection{Cocycle rigidity for partially hyperbolic flow} Similar to Fact \ref{fact:1}, we have the following result for partially hyperbolic flow:
\begin{fact}\label{fact:2}
 If $X\in \mathfrak{G}$ is partially hyperbolic and the equation $Xf=0$ has a solution $f\in \mathcal{H}^1(\pi)$ then $f=0$.
\end{fact}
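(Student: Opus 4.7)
The plan is to mimic the discrete-time argument for Fact \ref{fact:1}, replacing the iteration $b^j$ by the one-parameter subgroup $\exp(tX)$ and invoking the continuous analog of the matrix coefficient decay estimate in \eqref{for:64} of Lemma \ref{le:8}. Since $Xf=0$ implies $\pi(\exp(tX))f=f$ for every $t\in\RR$, it suffices to show that for every $u$ in a dense subspace (say $\mathcal{H}^1(\pi')$, with $\GG'=\GG'(\exp X)$ as in Section \ref{sec:2}) one has $\langle\pi(\exp(tX))f,u\rangle\to 0$ along some sequence $t_n\to\infty$; then $\langle f,u\rangle=0$ for all such $u$, forcing $f=0$.

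Concretely, I would decompose $X=x_X+y_X$ (with $y_X$ incorporating the compact, nilpotent and, in the twisted case, the $V$-component, after the conjugation reduction of Section \ref{sec:1}), exactly as done at the start of the proof of Corollary \ref{cor:3}. Because $X$ is partially hyperbolic, $x_X$ is a nonzero $\RR$-semisimple element and $d(e,\exp(tx_X))\geq Ctl(X)$ for large $t$, where $l(X)=\tfrac12\sum_{\mu}\abs{\log\mu}>0$ is the continuous analog of the quantity used in \eqref{for:50}. The bound \eqref{for:21} together with \eqref{for:58} gives polynomial-in-$t$ control of $\norm{\pi(\exp(ty_X))f}_{\GG',1}$, while Theorem \ref{th:2} applied to $\exp(tx_X)$ in the restricted representation $\pi'$ on the semisimple part of $\GG'$ yields exponential decay $e^{-\gamma t l(X)}$. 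Combining these exactly as in \eqref{for:54}--\eqref{for:41} produces
\begin{align*}
\abs{\langle\pi(\exp(tX))f,u\rangle}
&\leq C(1+\epsilon)^{t}(1+t)^{\dim\mathfrak{G}}\bigl(\norm{f}\norm{u}+\norm{u}\norm{f}_{\GG,1}\\
&\quad+\norm{f}\norm{u}_{\GG',1}+\norm{u}_{\GG',1}\norm{f}_{\GG,1}\bigr)e^{-\gamma t l(X)}
\end{align*}
for all $t\geq 0$, which tends to $0$ as $t\to\infty$ (taking $\epsilon$ small enough so that $\log(1+\epsilon)<\gamma l(X)$ is automatic here since $X$ itself is fixed and we are not perturbing). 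Since $\pi(\exp(tX))f=f$ identically, the left-hand side equals $\abs{\langle f,u\rangle}$, hence $\langle f,u\rangle=0$ for every $u\in\mathcal{H}^1(\pi')$, and by density $f=0$.

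The only mildly delicate point, which is the main obstacle, is ensuring that Theorem \ref{th:2} legitimately applies in the continuous-time setting: one must verify, as in the opening of the proof of Lemma \ref{le:8}, that the restriction of $\pi'$ to each simple factor of the semisimple part $G'$ of $\GG'(\exp X)$ is isolated from the trivial representation (using Remark \ref{re:5} in the twisted case), and that the element $\exp(tx_X)$ indeed lies in (a conjugate of) $A_0\subseteq G'$ so that the bi-$K$-invariant distance bound $d(e,\exp(tx_X))\geq Ctl(X)$ is valid. Both of these were already established in the course of proving Lemma \ref{le:8}, so no new machinery is needed and the argument closes in the same line as Fact \ref{fact:1}.
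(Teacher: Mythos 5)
Your argument is correct, but it takes a longer route than the paper. After observing that $Xf=0$ forces $\pi(\exp(tX))f=f$ for all $t\in\RR$, the paper simply specializes to a single time, say $t=1$: the group element $b=\exp(X)$ is partially hyperbolic in the sense of the definition for elements of $\GG$ (the spectrum of $\Ad(\exp X)$ is not contained in $U(1)$), and $\pi(b)f-\lambda f=0$ holds with $\lambda=1\in U(1)$, so Fact \ref{fact:1} applies verbatim and gives $f=0$ in two lines. You instead re-derive the continuous-time analogue of the decay estimate \eqref{for:64} — decomposing $X=x_X+y_X$, bounding $d(e,\exp(tx_X))$ from below linearly in $t$, controlling the polynomial growth from the compact/nilpotent/$V$ components, and letting $t\to\infty$. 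That derivation is sound (and the verification you flag, that $\pi'$ restricted to the simple factors of $G'$ is isolated from the trivial representation, is indeed the same one carried out at the start of the proof of Lemma \ref{le:8}), and it is essentially the content the paper defers to in the proof of Corollary \ref{cor:3}; but for this particular fact it is unnecessary machinery. The reduction to the time-one map is the intended shortcut: once invariance under the whole flow is noted, the discrete statement already proved does all the work.
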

\begin{proof}
The assumption $Xf=0$ implies that $f$ is invariant under $\pi(\exp(tX))$, i.e., $\pi(\exp(tX))f=f$ for any $t\in\RR$. Then the result follows directly from Fact \ref{fact:1} immediately.
\end{proof}
Then by arguments in the proof of Corollary \ref{cor:3}, we can follow the proof line of Lemma \ref{le:9} and Theorem \ref{th:9} to obtain the cocycle rigidity for $\RR^2$ partially hyperbolic actions.

Suppose $X,\,Y\in\mathfrak{G}$ are commuting partially hyperbolic and linearly independent and $m_0,\,\sigma$ as in Theorem \ref{th:6}.
\begin{corollary}\label{cor:4}
Suppose $(\pi,\,\mathcal{H})$ is a unitary representation  of $\GG$ such that the restriction of $\pi$ to any
simple factor of $G$ is isolated from the trivial representation  if $\GG=G$; or $\pi$ contains no non-trivial $V$-fixed vectors if $\GG=G\ltimes_\rho V$. For any $m\geq m_0+\sigma$, exist $\delta(m)>0$, such that for any $X_1,\,Y_1\in \mathfrak{G}$ with $\norm{X-X_1}+\norm{Y-Y_1}<\delta$ and $X_1Y_1=Y_1X_1$, if $f,\,h\in \mathcal{H}^m$
and satisfy the cocycle equation
\begin{align*}
  X_1f=Y_1 h,
\end{align*}
then the equations
\begin{align*}
  Y_1p=f,\qquad X_1 p=h
\end{align*}
have a common solution $p\in \mathcal{H}^{m-\sigma}$ satisfying
the Sobolev estimate
\begin{align*}
    \norm{p}_{m-\sigma}\leq C_{m,X,Y}\max\{\norm{h}_{m}, \,\norm{f}_{m}\}.
\end{align*}
\end{corollary}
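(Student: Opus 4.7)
The plan is to mirror the proof of Theorem \ref{th:9} in the flow setting, using Corollary \ref{cor:3} in place of Theorem \ref{th:6} and continuous-time analogues of Lemma \ref{le:8}(1) and Lemma \ref{le:9}(3) as the harmonic-analytic input. The flow analogues are almost immediate: for commuting, linearly independent partially hyperbolic $X_1, Y_1$ sufficiently close to $X, Y$, the $\RR$-semisimple parts of $X_1$ and $Y_1$ remain linearly independent, and using the decomposition $\exp(tX_1) = \exp(tx_{X_1}) \cdot \exp(ty_{X_1})$ of Corollary \ref{cor:3} together with the further splitting of Section \ref{sec:2}, one obtains $d(\exp(sx_{Y_1,1})\exp(tx_{X_1,1}), e) \geq c(|s|+|t|)$ for some $c > 0$, while $\text{Ad}(\exp(ty_{X_1}))$ grows only polynomially in $|t|$. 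Combined with Theorem \ref{th:2} and Remark \ref{re:5}, this shows that
\[
\int_{-\infty}^\infty \int_{-\infty}^\infty \pi(\exp(sY_1)\exp(tX_1)) h_{s,t}\, ds\, dt
\]
is a distribution in $\pi'$ for any family $\{h_{s,t}\}$ in $\mathcal{H}^1(\pi)$ whose Sobolev norms grow at most polynomially in $(s,t)$; this is the flow analogue of Lemma \ref{le:9}(3).

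With this tool in hand the higher-rank trick proceeds as follows. Set
\[
D := \int_{-\infty}^\infty \pi(\exp(tX_1)) h\, dt,
\]
which is a distribution in $\pi'$ by the flow version of Lemma \ref{le:8}(2) used in the proof of Corollary \ref{cor:3}. Since $[X_1, Y_1] = 0$ and $X_1 f = Y_1 h$, the fundamental theorem of calculus gives
\[
\int_{-T}^{T} \pi(\exp(tX_1)) Y_1 h\, dt = \pi(\exp(TX_1)) f - \pi(\exp(-TX_1)) f,
\]
whose right-hand side tends to $0$ distributionally as $T \to \infty$ by the matrix-coefficient decay of Theorem \ref{th:2}. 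Hence $Y_1 D = 0$, so $\pi(\exp(sY_1)) D = D$ for every $s \in \RR$. For any test vector $u \in \mathcal{H}^\infty(\pi')$, Fubini and the $Y_1$-invariance of $D$ give
\[
\int_{-\infty}^\infty \int_{-\infty}^\infty \langle \pi(\exp(sY_1)\exp(tX_1)) h, u\rangle\, dt\, ds = \int_{-\infty}^\infty \langle D, u\rangle\, ds,
\]
whose left side is finite by the double-integral bound above, forcing $\langle D, u\rangle = 0$ and hence $D = 0$. The symmetric argument yields $\int_{-\infty}^\infty \pi(\exp(sY_1)) f\, ds = 0$ as a distribution.

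With both obstructions vanishing, Corollary \ref{cor:3}(2) applied to $X_1$ produces $p \in \mathcal{H}^{m-\sigma}(\pi)$ with $X_1 p = h$ and $\norm{p}_{m-\sigma} \leq C_{m,X} \norm{h}_m$. The standard commutator check shows $p$ is a common solution: using $[X_1, Y_1] = 0$ and the cocycle equation,
\[
X_1(Y_1 p - f) = Y_1 X_1 p - X_1 f = Y_1 h - Y_1 h = 0,
\]
and since $m - \sigma \geq m_0 > 1$ the vector $Y_1 p - f$ lies in $\mathcal{H}^1(\pi)$, so Fact \ref{fact:2} forces $Y_1 p = f$. The estimate $\norm{p}_{m-\sigma} \leq C_{m,X,Y}\max\{\norm{h}_m, \norm{f}_m\}$ is then immediate from the bound on $\norm{p}_{m-\sigma}$ by $\norm{h}_m$.

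I expect the main obstacle to lie not in the higher-rank trick itself but in the bookkeeping required for the continuous analogue of Lemma \ref{le:9}(3), especially in the semidirect case $\GG = G\ltimes_\rho V$. There one must first normalize $v_{X_1}$ and $v_{Y_1}$ into a common zero-weight space via a conjugation analogous to Lemma \ref{le:9}(2) (as carried out infinitesimally in the proof of Corollary \ref{cor:3}), and then verify that the polynomial growth in $(s,t)$ contributed by the compact, unipotent, and $V$-components of $\exp(sY_1)\exp(tX_1)$ is dominated by the exponential decay coming from the $\RR$-semisimple factor, uniformly for $(X_1, Y_1)$ in a neighbourhood of $(X, Y)$.
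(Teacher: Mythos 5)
Your proposal is correct and follows essentially the same route as the paper: the paper's own ``proof'' of Corollary \ref{cor:4} consists of the remark that one should combine the flow decompositions from Corollary \ref{cor:3} with the proof line of Lemma \ref{le:9} and Theorem \ref{th:9}, replacing Fact \ref{fact:1} by Fact \ref{fact:2}, which is exactly the higher-rank trick with integrals in place of sums that you carry out. Your write-up actually supplies more detail than the paper does (the explicit telescoping via the fundamental theorem of calculus, the Fubini step, and the regularity check $p\in\mathcal{H}^{2}$ needed so that $Y_1p-f\in\mathcal{H}^1$ before invoking Fact \ref{fact:2}).
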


\section{Application to algebraic partially hyperbolic actions }

Recall that  $G$ denotes a real semisimple connected Lie group  of $\RR$-rank $\ge 2$ without compact factors and with finite center and $\Gamma$ a torsion free irreducible lattice in $G$.
\begin{definition} {\em Coarse Lyapunov distributions} are defined as  minimal
non-trivial intersections of stable distributions of various action
elements.\end{definition}

In the setting of present paper those are homogeneous distributions
or their perturbations,  that integrate to homogeneous foliations
called {\em coarse Lyapunov foliations}; see \cite[Section
2]{Damjanovic1} and \cite{Kalinin} for detailed discussion in
greater generality.

\subsection{Symmetric space examples}  For any abelian set $A \subset G$ there exists a $\RR$-split Cartan subgroup $A_0$ such that $x_a\in A_0$ (see \eqref{for:9}) for any $a\in A $ (see Proposition 4.2 of \cite{Zhenqi0}). Set $A'=\{x_a,\,a\in A\}$. Then $A'$ is a subgroup of $A_0$. We consider the
decomposition of $\mathfrak{g}$ with respect to the adjoint
representation of $A'$ and the resulting root system $\Delta_A$ is
called the {\em restricted root system with respect to $A$}. Then we get the decomposition of the Lie algebra
$\mathfrak{g}$ of $G$:
\begin{align}\label{for:6}
\mathfrak{g}=\mathfrak{g}_0 + \sum_{\mu\in\Delta_A}\mathfrak{g}_\mu
\end{align}
where $\mathfrak{g}_\mu$ is the root space of $\mu$ and
$\mathfrak{g}_0$ is the Lie algebra of the centralizer $C_G(p(A))$ of $A'$.

Elements of $\{a\in A:\,x_a\notin\bigcup_{\phi\in\Delta_A}\ker(\phi)\}$ are
\emph{regular} elements for $\alpha_A$. Connected components of the set of regular
elements are \emph{Weyl chambers}. For any $\mu \in \Delta_A$ let $\mathfrak{g}^{(\mu)}=\sum_{k>0}\mathfrak{g}_{k\mu}$ and
$U_{[\mu]}$ be the corresponding subgroup of $G$. Then these subalgebra $\mathfrak{g}^{(\mu)}$ form coarse Lyapunov distributions and (double) cosets of these subgroups $U_{[\mu]}$ form coarse Lyapunov foliations of $\alpha_A$, which coincide with those of $\alpha_{p(A)}$ (see Proposition 4.2 of \cite{Zhenqi0}).

If $A'=A_0$, the left translations  of $A$ on $ G/\Gamma$  is sometimes referred to as  {\em full Cartan action} (see \cite{Spatzier1}). If the coarse Lyapunov foliations of $\alpha_A$ coincide with those of $\alpha_{A_0}$, then $A'$ is in a generic position (see \cite{Damjanovic1}) and the action of $A$ on $ G/\Gamma$ is called a \emph{generic restriction}.

 \begin{remark}If $A'=A_0$, i.e., $\Delta_{A}$ is the standard root system. Calling $\Delta_{A}$ a restricted root system is somewhat abusive. Indeed, $\Delta_{A}$ does not carry the usual structures of a (reduced) root system, such as a canonical inner product and associated Weyl group. For more details, see Section 1.1 of \cite{Zhenqi0}.
 \end{remark}

\subsection{Twisted symmetric space examples}
Let $\rho:\Gamma\rightarrow
SL(N,\ZZ)$ be a representation of $\Gamma$ which admits no invariant
subspace on which $\Gamma$ acts trivially. We also assume that the Zariski closure of $\rho(\Gamma)$ has no compact factors.

 Then $\Gamma$ acts on the $N$-torus
$\mathbb T^N$ via $\rho$ and hence on $G\times\TT^N$ via
\begin{align*}
 \gamma(g,t)=(g\gamma^{-1},\,\rho(\gamma)t).
\end{align*}
Let $M=G\times\TT^N/\Gamma$ be the quotient of this action. $A$ acts on the product $G\times\TT^N$ given by $ a(g,t)=(ag,t)$ and since, the action of $A$
and $\Gamma$ commute it induces an
action of $A$ on $M$, which is the suspension of $\Gamma$-action on $\TT^N$.

We can assume $G$ has the following property: every Lie algebra homomorphism $\mathfrak{g}\to \mathfrak{sl}(N,\RR)$ is the derivative of a Lie group homomorphism $G\to SL(N,\RR)$. Otherwise we pass to some finite cover of $G$.

By Margulis' superrigidity theorem
\cite{Margulis}, semisimplicity of the algebraic hull $H$ of
$\rho(\Gamma)$ and the non-compactness of $\rho(\Gamma)$ the
representation $\rho$ of $\Gamma$ extends to a rational homomorphism
$G\rightarrow H_{ad}$ over $\RR$ where $H_{ad}$ is the adjoint group
of $H$. Note that $\rho(\Gamma)$ has finite center $Z$ (which follows,
eg, from Margulis' finiteness theorem \cite{Margulis}), then
$G$ acts on the orbifold $\RR^N/Z$ via $\rho$, which can be lifted to a
representation of $G$ on $\RR^N$, which we denote by
$\hat{\rho}$. Then $\rho(\gamma)\hat{\rho}(\gamma)^{-1}\in Z$ for any $\gamma\in \Gamma$. Then, by passing to a finite index subgroup $\Gamma_1$ of $\Gamma$, we get $\rho(\gamma)=\hat{\rho}(\gamma)$ for any $\gamma\in \Gamma_1$. Then $G\times\TT^N/\Gamma_1$ is a finite cover of $M$.
For simplicity, we still use $\rho$ to denote the lifted representation of $G$ on $\RR^N$.

We can build the associated semi-direct product $G_{\rho}=G\ltimes_\rho \RR^N$. The
multiplication of elements in $G_\rho$ is given by \eqref{for:10}, which shows that $G\times\TT^N/\Gamma_1=G_\rho/\Gamma_1\ltimes\ZZ^N$. The $A$ action on $G\times\TT^N/\Gamma_1$ is isomorphic to the action of $A$ as a homogeneous flow on $G_\rho/\Gamma_1\ltimes\ZZ^N$. We can
view $G\times\TT^N/\Gamma_1$ as a torus bundle over
$G/\Gamma_1$.
\begin{remark}
Firstly, if the Zariski closure of $\rho(\Gamma)$ has compact factors, we can pass to a suspension space (see \cite{Zhenqi111}).

Secondly, passing to finite covers of the homogeneous actions will not affect the local rigidity results. Indeed, the constriction of the transfer map relies on vanishing of the obstructions (see Section \ref{sec:4}). If the obstructions vanishes on a finite cover, then it also vanishes on the original space  (see Lemma \ref{le:16}). This allows us to assume that $\Gamma_1=\Gamma$. We denote by $\Gamma_\rho=\Gamma\ltimes\ZZ^N$.
\end{remark}
For any abelian set $A \subset G_\rho$ and set $A'=\{x_{g_a}:\,a\in A\}$ (see below \eqref{for:10}), there exists an element $s\in G_\rho$ such that the coarse Lyapunov distributions for the action of $\alpha_{sAs^{-1}}$ is the same as those for $\alpha_{A'}$  (see Proposition 4.4 of \cite{Zhenqi0}).
Let $\Phi_{A}$ denote the \emph{restricted weights of $G$ with respect to $A'$}.  Then the Lie algebra
$\mathfrak{g}_\rho$ of $G_\rho$ decomposes
\begin{align*}
\mathfrak{g}_\rho=\mathfrak{g}_0 + \sum_{r \in\Delta_A}\mathfrak{g}_r +\sum_{\mu\in\Phi_{A}}\mathfrak{e}_\mu
\end{align*}
where $\mathfrak{e}_\mu$ is the weight space of $\mu$ and
$\mathfrak{g}_0$ is the Lie algebra of the centralizer $C_{G_\rho}(A')$ of $A'$ in $\mathfrak{g}_\rho$.

Note that if $A'=A_0$, $\Phi_{A}$ is the standard weights. For any $r\in \Delta_A\cup \Phi_{A}$ let $\mathfrak{g}^{(r)}=\sum_{k\in \RR_+} \left(\mathfrak{g}_{kr}+\mathfrak{e}_{kr}\right)$ and
$U_{[r]}$ be the corresponding subgroup of $G_\rho$. Note that $r$ or $\mu$ may appear in the set of both restricted roots and weights.  These subalgebra $\mathfrak{g}^{(r)}$ form coarse Lyapunov distributions and (double) cosets of these subgroups $U_{[r]}$ form coarse Lyapunov foliations of $\alpha_A$, which coincide with those of $\alpha_{A'}$ (see Proposition 4.2 of \cite{Zhenqi0}).

Elements of $\{a\in A:\,x_{g_a}\notin\bigcup_{\phi\in\Delta_A\cup\Phi_{A}\setminus \{0\}}\ker(\phi)\}$ are \emph{regular} for $\alpha_A$ and connected components of the set of regular
elements \emph{Weyl chambers}. If $A=A_0$, the left translations  of $A$ on $ G_\rho/\Gamma_\rho$  is sometimes referred to as  {\em full Cartan action}.

Similarly to the symmetric space setting we will consider actions of
higher rank subgroups of $A$ by left
translations on double coset space $L\backslash G_\rho/\Gamma_\rho$ where $L$ is a compact subgroup commuting with $A$.
\subsection{Higher rank restrictions and standard perturbations}\label{regularrestrictions}
Let $X$  be   a double coset space $L\backslash G/\Gamma$ as in
symmetric space examples or  $L\backslash
G_\rho/\Gamma_\rho$ as in twisted symmetric space
examples; and let $\bar{X}$  be   a coset space $G/\Gamma$ as in
symmetric space examples or  $G_\rho/\Gamma_\rho$ as in twisted symmetric space
examples. We consider the action $\alpha_A$ on both $X$ and $\bar{X}$.

Since $A$ is the image of an embedding $i_0 :
\ZZ^k\times\RR^{\ell}\rightarrow A$, one can naturally consider
the  action $\alpha_{i_0}$  of $A=\ZZ^k\times\RR^l$  on $X$ (or on $\bar{X}$)
 given by
 \begin{align}\label{for:15}
 \alpha_{i_0}(a,x)=i_0(a)\cdot x
 \end{align}
Then we will say that $A$ action  $\alpha_{i_0}$ generates $A$ action $\alpha_{A}$ since $\alpha_{i_0}$ is $\alpha_{A}$ with a fixed system of coordinates.
Note that $A$ can be obtained as  the image of different embeddings; corresponding actions of $A$ differ by a time change. It is immediately obvious that if $\alpha_{i_0}$ is cocycle rigid then the same is true for any time change obtained by an automorphism of $A$; hence the notion of cocycle rigidity for $\alpha_{A}$  depends only on the subgroup  $A$.
\begin{definition}
$\alpha_A$ is called a \emph{higher-rank} partially action, if the set of coarse Lyapunov distributions of $\alpha_A$ is not generated by a rank-one subgroup.
\end{definition}
 It is clear that if $\alpha_A$ is higher-rank, then we can choose regular elements $a$ and $b$ of $A$ such that $x_a$ and $x_b$ (resp. $x_{g_a}$ and $x_{g_b}$) are linearly independent. $a$ and $b$ will be
referred to as {\em regular generators}.

Let $\mathfrak L$ be the Lie algebra of the group  $L$. Let $\mathcal{N}$ and $\mathfrak{N}$ denote the neutral distributions of $\alpha_A$ on $X$ and on $\bar{X}$
respectively (neutral distribution is the subspace spanned by Lyapunov distributions with $0$ Lyapunov exponents).  Then:

\begin{itemize}

\item
For the symmetric space examples $\mathcal{N}$ is $\mathfrak L\backslash\mathfrak{N}$
where $\mathfrak{N}=\mathfrak{g}_0$;

\medskip
\item  for the twisted symmetric
space examples $\mathcal{N}$  is
$\mathfrak{L}\backslash\mathfrak{N}$ where $\mathfrak{N}=
\mathfrak{g}_0+\mathfrak{e}_0$.

\end{itemize}
\begin{remark}\label{re:4} Notice that the neutral distribution for $\alpha_A$ coincides with the
homogeneous distribution into cosets of the centralizer of $A$, or its factor by $L$  in the case of   actions  on double coset spaces.
\end{remark}
\subsection{Exponential matrix coefficients decay on homogeneous space} Here we review some relevant facts concerning the preservation of spectral gaps when restricting to the subgroups in the example we consider.
\begin{theorem} \label{th:8} Let $S=S_1\times\cdots\times S_k$ be a product of noncompact simple Lie groups with finite center,  $\Gamma$ an irreducible lattice in $S$, and let $\rho_0$ stand for
the regular representation of $S$ on the subspace of $L^2(S/\Gamma)$ orthogonal to
constant functions. Then the restriction of $\rho_0$ to any simple factor of $S$ is isolated (in the Fell topology)
from the trivial representation.
\end{theorem}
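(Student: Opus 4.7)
The plan is to prove Theorem \ref{th:8} by first verifying that $\rho_0$ has no nonzero $S_i$-invariant vectors, and then upgrading this from ``no invariants'' to ``isolated from trivial'' using a case analysis that exploits the higher-rank / irreducibility hypotheses. In what follows, fix a simple factor $S_i$ and write $T_i = \prod_{j\neq i} S_j$, so that $S = S_i \times T_i$ and the left $S_i$-action on $S/\Gamma$ commutes with the right $\Gamma$-action.

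First, I would show that $L^2_0(S/\Gamma)$ contains no nonzero $S_i$-fixed vector. A vector $f \in L^2(S/\Gamma)$ invariant under the left $S_i$-action descends to a measurable function on the double coset space $S_i \backslash S / \Gamma$. Identifying $S_i \backslash S \cong T_i$, this becomes a function on $T_i$ invariant under the right translation by $\pi_i(\Gamma)$, where $\pi_i : \Gamma \to T_i$ is the projection. Irreducibility of $\Gamma$ is precisely the statement that $\pi_i(\Gamma)$ is dense in $T_i$ (a standard consequence of Borel density together with the absence of proper normal factors), so $f$ is essentially constant on $T_i$ and hence on $S/\Gamma$; the orthogonality to constants then forces $f=0$.

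Second, I would establish the quantitative isolation by case analysis. If $S_i$ itself has Kazhdan's property~(T), then the first step already concludes the proof, since having no $S_i$-invariants is equivalent, under (T), to being bounded away from the trivial representation in the Fell topology. If $S_i$ lacks property~(T) (so $S_i$ is locally isomorphic to $SO(n,1)$ or $SU(n,1)$), then by the paper's standing higher-rank hypothesis either some other $S_j$ is higher-rank and hence (T), or $S_i$ itself must already be higher rank. In the former situation, the product $S$ has property~(T), and a ``propagation'' argument shows that any almost $S_i$-invariant unit vectors $v_n \in L^2_0(S/\Gamma)$ can be converted into almost $S$-invariant vectors by using density of $\pi_i(\Gamma)$ in $T_i$ and averaging over compacta, contradicting (T) for $S$. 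In the remaining fully rank-one irreducible case, I would invoke Margulis' arithmeticity to realize $\Gamma$ as a congruence subgroup and appeal to Selberg/Burger--Sarnak/Clozel spectral gap estimates for each simple factor.

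The hard part is the rank-one factor case: the soft property~(T) arguments collapse and one genuinely needs either the arithmeticity machinery or a direct integral decomposition argument (as sketched in the appendix of \cite{Kleinbock} that is already cited for Theorem~\ref{th:2}). My proposed cleanest route is in fact the direct integral route: decompose $L^2_0(S/\Gamma)$ under $S_i$ as a direct integral of irreducibles, use the fact that the commuting $T_i$-action together with the dense $\pi_i(\Gamma)$-action intertwines almost-invariant spectral fibers, and deduce from the Borel density theorem that the only possible accumulation point would be the trivial $S_i$-representation itself---which is excluded by the first step. This yields an isolation constant depending only on $S$ and $\Gamma$, completing the proof.
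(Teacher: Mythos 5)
Your first step (no $S_i$-invariant vectors in $L^2_0(S/\Gamma)$, via density of the projection of $\Gamma$ to $T_i=\prod_{j\neq i}S_j$) is correct, but it is the easy half and cannot carry the weight you later place on it. The theorem asserts isolation in the Fell topology, i.e.\ the absence of \emph{almost} invariant vectors, and outside the property (T) setting this does not follow from the absence of invariant vectors; the gap between the two is exactly the content of the statement. Two steps of your case analysis fail concretely. First, when $S_i$ lacks (T) but some other $S_j$ has it, you assert that ``the product $S$ has property (T)''; this is false, since a product of locally compact groups has (T) if and only if \emph{every} factor does, and you have just assumed $S_i$ does not. The ensuing ``propagation'' of almost $S_i$-invariant vectors to almost $S$-invariant vectors is therefore aimed at a contradiction that is not available, and in any case no mechanism is given: density of $\pi_i(\Gamma)$ in $T_i$ is a statement about the $\Gamma$-action on the group $T_i$, and it does not let you control $\|\rho_0(t)v_n-v_n\|$ for $t\in T_i$ when $v_n$ is only \emph{almost} $S_i$-invariant (for genuinely invariant $v$ this is your step one; for almost invariant $v_n$ the errors are not controlled). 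Second, in the closing ``direct integral route'' you claim the only possible accumulation point of the spectral decomposition is the trivial representation, ``which is excluded by the first step.'' The first step excludes the trivial representation as a \emph{sub}representation; it says nothing about parameters in the support of the direct integral (e.g.\ complementary series of $S_i$) accumulating at the trivial representation without ever equaling it, which is precisely what isolation in the Fell topology must rule out.

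The case where real input is unavoidable is $k\ge 2$ with the $i$-th factor of rank one --- e.g.\ Hilbert modular lattices in $SL(2,\RR)\times SL(2,\RR)$, where neither the factors, nor the product, nor $\Gamma$ has (T). There the result is an arithmetic theorem: one reduces via Margulis arithmeticity \cite{Margulis} to a lattice commensurable with a congruence lattice and then invokes Clozel's property $(\tau)$ \cite{Clozel} (or, for nonuniform $\Gamma$, Kleinbock--Margulis \cite{Kleinbock1}). This is in fact all the paper itself does: its proof of Theorem \ref{th:8} is the assembly ``$k=1$ and higher rank: property (T) and \cite{cow}; $k=1$ and rank one: classical spectral gap; $k\ge2$: \cite{Kleinbock1}, \cite{Clozel} plus arithmeticity.'' Your proposal cites these sources only for the ``fully rank-one'' sub-case; the fix is to route \emph{every} case in which $S_i$ lacks (T) through them and to delete the two soft arguments above. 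As written, the proposal does not constitute a proof.
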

If $k=1$ and the rank of $S$ is at least $2$ then $S$ has property $T$ and
the result follows directly from \cite{cow}. If $k=1$ and  the rank of $S$ is $1$, the spectral gap is already known. If $k\geq2$, in the case of nonuniform $\Gamma$, this was proved by Kleinbock and Margulis and appeared as Theorem 1.12 in \cite{Kleinbock1}.
L. Clozel \cite{Clozel} extended this result to congruence lattices discussed in \cite{Clozel}. By
Margulis \cite{Margulis}, $\Gamma$ is arithmetic and hence is commensurable
with a congruence lattice of the type in \cite{Clozel}, the result holds for all lattices.

We assume notations in Section \ref{sec:17} for the regular representation $(\pi,\mathcal{H})$ of $G$ or $G_\rho$, where $\mathcal{H}=L^2(G/\Gamma)$ or $L^2(G_\rho/\Gamma_\rho)$ correspondingly. It is clear that given any $a\in G$ (resp. $a\in G_\rho$)
\begin{align*}
  (\pi(a)f)(x)=f(a^{-1}x),\qquad \forall x\in X,\,\,f\in \mathcal{H}.
\end{align*}
For simplicity, we denote $\pi(a)f$ by $f(a^{-1})$ or by $f\circ a^{-1}$. Set $\mathcal{H}_0$ to be vectors in $\mathcal{H}$ orthogonal to constants. The following result is essential for later part:
\begin{corollary}\label{cor:1}
There exist constants $\gamma,E>0$, dependent only on $G$, $\Gamma$ and $\rho$ such that
if $h,\,f\in \mathcal{H}^1$, $i=1,2$ orthogonal to the constants in $\mathcal{H}$
then for any $a\in G$:
\begin{align}\label{for:16}
\abs{\langle h(a),f\rangle}&\leq
E(\norm{h}\norm{f}+\norm{h}_{1}\norm{f}\notag\\
&+\norm{f}_{1}\norm{h}+\norm{h}_{1}\norm{f}_{1})e^{-\gamma
\emph{dist}(e,a)}
\end{align}
where $\langle \cdot,\cdot\rangle$ denotes the inner product in $\mathcal{H}$
with respect to the Haar measure.
\end{corollary}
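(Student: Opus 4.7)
The plan is to derive Corollary \ref{cor:1} as a direct consequence of the Kleinbock--Margulis bound (Theorem \ref{th:2}) applied to the regular representation restricted to the orthocomplement of constants, together with the observation that $\mathcal{H}^1$ vectors are automatically Lipschitz with Lipschitz coefficient controlled by the Sobolev $1$-norm. Two ingredients are required: (a) the restriction of the relevant representation of $G$ to any simple factor of $G$ is isolated from the trivial representation, and (b) the Sobolev-to-Lipschitz comparison.

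For (a), in the symmetric-space case $\mathcal{H}=L^2(G/\Gamma)$, Theorem \ref{th:8} directly asserts that the restriction of the regular representation on the orthocomplement of constants to any simple factor of $G$ is isolated (in the Fell topology) from the trivial representation. In the twisted case $\mathcal{H}=L^2(G_\rho/\Gamma_\rho)$ I would use the orthogonal, $G$-invariant decomposition
\[
\mathcal{H} = L^2(G/\Gamma) \oplus L^2(G/\Gamma)^{\perp},
\]
in which $L^2(G/\Gamma)$ sits inside $L^2(G_\rho/\Gamma_\rho)$ as the subspace of $V$-invariant functions (a direct computation with the semidirect-product structure \eqref{for:10} shows that a vector is $V$-fixed iff it is independent of the $V$-coordinate, so this subspace is $G$-invariant). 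On the first summand modulo constants, Theorem \ref{th:8} applies once again. The second summand contains no non-trivial $V$-fixed vectors, so by Remark \ref{re:5} (i.e., Theorem 1.2 of \cite{Zhenqi23}) its restriction to each simple factor of $G$ is isolated from the trivial representation. Combining these two pieces gives a uniform spectral gap on the orthocomplement of constants in $\mathcal{H}$.

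For (b), for $h\in\mathcal{H}^1$ and $g$ in a fixed bounded neighborhood of $e$, the fundamental theorem of calculus along a shortest path from $e$ to $g$ gives $\norm{\pi(g)h-h}\leq C\,d(e,g)\norm{h}_{1}$; for $g$ outside this neighborhood the trivial bound $\norm{\pi(g)h-h}\leq 2\norm{h}\leq C' d(e,g)\norm{h}$ suffices. Hence the Lipschitz coefficient $\delta(h)$ of Theorem \ref{th:2} satisfies $\delta(h)\leq C_0\norm{h}_{1}$ for a universal constant $C_0$.

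With (a) and (b) in hand, Theorem \ref{th:2} applied to $h,f\in\mathcal{H}^1$ orthogonal to constants yields
\[
\abs{\langle\pi(a)h,f\rangle}\leq \bigl(E\norm{h}\norm{f}+\delta(h)\norm{f}+\delta(f)\norm{h}+\delta(h)\delta(f)\bigr)e^{-\gamma d(e,a)},
\]
and substituting $\delta(\cdot)\leq C_0\norm{\cdot}_{1}$ gives \eqref{for:16} after absorbing $C_0$ into $E$. The only genuine difficulty is the verification of the spectral gap in the twisted case, which is precisely what the $V$-fixed/$V$-non-fixed splitting together with Remark \ref{re:5} addresses; every other step is routine.
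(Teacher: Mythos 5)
Your proposal is correct and follows essentially the same route as the paper: the same splitting of $L^2(G_\rho/\Gamma_\rho)$ into the fiberwise-constant ($V$-fixed) part, handled via Theorem \ref{th:8}, and its orthocomplement, handled via Theorem 1.2 of \cite{Zhenqi23}, followed by an application of Theorem \ref{th:2}. The only points worth noting are that you are slightly more explicit than the paper about the Sobolev-to-Lipschitz comparison $\delta(h)\leq C_0\norm{h}_1$, while the paper is slightly more careful than you in first passing to the direct product $G'$ of the simple factors and the lattice $p^{-1}(\Gamma)$ so that Theorem \ref{th:8} (stated for genuine products) literally applies; neither difference is substantive.
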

\begin{proof}
Let $G'$ denote the direct product of simple factors of $G$ and let $p$ be the projection of $G'$ to $G$. Then $G'/p^{-1}(\Gamma)$ is isomorphic to $G/\Gamma$. Since
$G$ has finite center then $p^{-1}(\Gamma)$ is also an irreducible lattice. This implies that \eqref{for:16} follows directly from Theorem \ref{th:8} and Theorem \ref{th:2} for vectors in $\mathcal{H}=L^2(G/\Gamma)$ orthogonal to constants.

For $\mathcal{H}=L^2(G_\rho/\Gamma_\rho)$, we note that $G_\rho/\Gamma_\rho$ can be viewed as a torus bundle over $G/\Gamma$. Set $V=\{f(g,t)\in \mathcal{H}_0: f(g,t)=\int_{T^N}f(g,t)dt\}$.  Then it is clear that $V$ is the set of $\RR^N$-invariant vectors in $\mathcal{H}_0$. For any $u\in \mathcal{H}_0$, write
\begin{align}\label{for:12}
    u(g,t)&=\underbrace{u(g,t)-\int_{T^N}u(g,t)dt}_{u^o(g,t)}+\underbrace{\int_{T^N}u(g,t)dt}_{u^1(g)}.
\end{align}
Then $f^0\in V^{\bot}$ and $f^1\in V$. Note that both $V^{\bot}$ and $V$ are closed and invariant under $G_\rho$. Hence we get a direct decomposition of $\mathcal{H}_0$ invariant under $G\ltimes\RR^N$. Then we have
\begin{align*}
\langle h(a),f\rangle=\langle h^o(a),f^o\rangle+\langle h^1(a),f^1\rangle.
\end{align*}
It follows from Theorem $1.2$ of \cite{Zhenqi23} that the restriction of $\pi$ to any simple factor of $G$ is isolated from the trivial representation. Then Theorem \ref{th:2} implies that
\begin{align}\label{for:7}
\abs{\langle h^o(a),f^o\rangle}&\leq
E_1(\norm{h^o}\norm{f^o}+\norm{h^o}_{1}\norm{f^o}\notag\\
&+\norm{f^o}_{1}\norm{h^o}+\norm{h^o}_{1}\norm{f^o}_{1})e^{-\gamma_1
\emph{dist}(e,a)}\notag\\
&\leq
E_1(\norm{h}\norm{f}+\norm{h}_{1}\norm{f}\notag\\
&+\norm{f}_{1}\norm{h}+\norm{h}_{1}\norm{f}_{1})e^{-\gamma_1
\emph{dist}(e,a)},
\end{align}
where $\gamma_1,\,E_1>0$, dependent only on $G$ and $\bar{X}$.

Note that $f^1,\,h^1$ can be viewed as functions in $L^2(G/\Gamma)$ orthogonal to constants. The arguments at the beginning of the proof show that
\begin{align}\label{for:11}
\abs{\langle h^1(a),f^1\rangle}&\leq
E_1(\norm{h^1}\norm{f^1}+\norm{h^1}_{1}\norm{f^1}\notag\\
&+\norm{f^1}_{1}\norm{h^1}+\norm{h^1}_{1}\norm{f^1}_{1})e^{-\gamma_2
\emph{dist}(e,a)}\notag\\
&\leq
E_2(\norm{h}\norm{f}+\norm{h}_{1}\norm{f}\notag\\
&+\norm{f}_{1}\norm{h}+\norm{h}_{1}\norm{f}_{1})e^{-\gamma_2
\emph{dist}(e,a)},
\end{align}
where $\gamma_2,\,E_2>0$, dependent only on $G$ and $X$.

Hence \eqref{for:16} follows from \eqref{for:7} and \eqref{for:11} immediately.
\end{proof}
\begin{remark}
Corollary \ref{cor:1} shows that for the restricted regular representation $(\pi,\mathcal{H}_0)$ of $G$ or $G_\rho$, its restriction to any
simple factor of $G$ is isolated from the trivial representation.
\end{remark}
\subsection{$\mathcal{H}^s$ and $\mathcal{H}_0^s$ space on $X$}
Fix a basis $\{Y_i\}_{1\leq i\leq q}$ of $\mathfrak L\backslash\mathfrak{g}$ or $\mathfrak L\backslash\mathfrak{g}_\rho$. Let
$\mathcal{H}^m_X$ to be the subspace of $L^2(X)$ such that $f$ and $Y_i^j (f)$, $1\leq j\leq m$ exist as $L^2$ functions for  $1\leq i\leq q$. Define
\begin{align*}
\norm{f}_m\stackrel{\text{def}}=(\sum_{i=1}^{q}\sum_{j=1}^m\norm{Y_i^jf}_0^2+\norm{f}_0^2)^{1/2}.
\end{align*}
Let
$\mathcal{H}_{0,X}^r\stackrel{\text{def}}{=}\{f\in\mathcal{H}^r_X\mid\int_{X}f=0\}$. We use subscripts to emphasize that the spaces $\mathcal{H}^m_X$ and $\mathcal{H}_{0,X}^r$ are different from the Hilbert spaces $\mathcal{H}^m$ and $\mathcal{H}_{0}^r$ for the regular representation $\pi$.

\subsection{Twisted cohomological stability for homogeneous space}\label{sec:4}
For a map $\mathcal{F}$ with coordinate functions $f_i$, $1\leq i\leq n_0$ and $-\infty\leq s\leq\infty$, we write $\mathcal{F}\in \mathcal{H}_X^s$ if $f_i\in \mathcal{H}_X^s$,
$1\leq i\leq n_0$; and define $\norm{\mathcal{F}}_s=\max_{1\leq i\leq
n_0}\norm{f_i}_s$. $\mathcal{F}\in\mathcal{H}_{0,X}^s$ is defined similarly.
For two maps $\mathcal{F}$, $\mathcal{G}$ define
$\norm{\mathcal{F},\mathcal{G}}_s=\max\{\norm{\mathcal{F}}_s,\norm{\mathcal{G}}_s\}$.
Write $\int_X \mathcal{F}=(\int_X
f_1d\mu,\cdots,\int_X f_{n_0}d\mu)$ where $\mu$ is the Haar measure.

In this part, we show the solvability condition  for the existence of a
solution to equation \eqref{eq:2}. This argument is essentially the  reduction of the vector values equation \eqref{eq:2} and solvability  condition in \eqref{for:38} to scalar equations. After showing the vanishing
of obstructions, the tame estimates for the solution of equation \eqref{eq:2} follow from
Theorem \ref{th:6}.

We use $\GG$ to denote $G$ or $G_\rho$. For any partially hyperbolic element $z\in \GG$, we use $\mathfrak{N}_z$ to denote the neutral distribution of $\text{Ad}(z)$.
\begin{lemma}\label{le:16}
For any partially hyperbolic element $z\in \GG$:
\begin{enumerate}
  \item \label{for:34} If $\mathcal{F}:X\to \mathfrak{N}_z$ and $\mathcal{F}\in \mathcal{H}_{0,X}^1$, then
  \begin{align*}
       \Lambda_{\binom{+}{-}}=\binom{-}{+}\sum_{\binom{j\geq0}{j\leq
-1}}\Ad(z)^{-(j+1)}(\mathcal{F}\circ z^j)
      \end{align*}
  are distributions.

  \medskip
 \item \label{for:38} There exist constants $m_1>\sigma_1>1$ (only depending on $\GG$) such that for any $l\geq m_1$, if $\mathcal{F}:X\to \mathfrak{N}_z$,
 $\mathcal{F}\in\mathcal{H}_{0,X}^l$ and
 \begin{align*}
   \sum_{j=-\infty}^{+\infty}\Ad(z)^{-(j+1)}(\mathcal{F}\circ
z^j)=0
 \end{align*}
 as a
distribution, then the equation
\begin{align}\label{eq:2}
\Lambda\circ z-\Ad(z)\Lambda=\mathcal{F}
\end{align}
have a solution $\Lambda\in\mathcal{H}_{0,X}^{l-\sigma_1}$ with estimates
\begin{align*}
\norm{\Lambda}_{l-\sigma_1}\leq C_{l,z} \norm{\mathcal{F}}_{l}.
\end{align*}

  \medskip
  \item \label{for:18} If $\mathcal{F}:X\to \mathfrak{N}_z$ and the equation $\Lambda\circ z-\Ad(z)\Lambda=\mathcal{F}$ has a solution $\Lambda\in\mathcal{H}_{0,X}^{1}$, then $\Lambda$ is unique.

  \medskip
  \item \label{for:36} There exist constants $m_1>\sigma_1>1$ (only depending on $\GG$) such that for any $l\geq m_1$, there exists $\delta(l)>0$, such that for any $b\in \GG$ with $d(z,b)<\delta$, if $\mathcal{F}:X\to \mathfrak{N}_z$ with $\Ad(b)$ neutral and invariant on $\mathfrak{N}_z$ and $\mathcal{F}\in\mathcal{H}_{0,X}^l$ satisfying
      \begin{align*}
        \sum_{j=-\infty}^{+\infty}\Ad(b)^{-(j+1)}(\mathcal{F}\circ
z^j)=0
      \end{align*}
as a
distribution, then the equation
\begin{align}\label{for:17}
 \Lambda\circ b-\Ad(b)\Lambda=\mathcal{F}
\end{align}
has a solution $\Lambda\in\mathcal{H}_{0,X}^{l-\sigma_1}$ and the following
estimate holds for $\Lambda$:
\begin{align*}
\norm{\Lambda}_{l-\sigma_1}\leq C_{l,z} \norm{\mathcal{F}}_{l}.
\end{align*}
\end{enumerate}
\end{lemma}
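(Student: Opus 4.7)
My plan is to reduce the vector-valued equations on $\mathfrak{N}_z$ to the scalar framework of Theorem \ref{th:6} and Lemma \ref{le:8}. Since $\mathfrak{N}_z$ is the neutral distribution of $\Ad(z)$, the operator $\Ad(z)\vert_{\mathfrak{N}_z}$ has spectrum in $U(1)$; in a Jordan basis, $\Ad(z)^{-(j+1)}$ becomes block-diagonal with entries of the form $\lambda_i^{-(j+1)} p(j)$, where $\lambda_i \in U(1)$ and $p$ is a polynomial of degree at most $\dim \mathfrak{N}_z - 1$ coming from the nilpotent Jordan part. This polynomial-in-$j$ growth is precisely the hypothesis $\norm{v_j}_{\GG,s} \leq P_s(\abs{j})\norm{h}_{\GG,s}$ absorbed by Lemma \ref{le:8}\eqref{for:60}--\eqref{for:61}, so the vector-valued extension is a structural application rather than a new analytic estimate.

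For parts \eqref{for:34} and \eqref{for:38}, I would define the vector-valued formal solutions
\begin{align*}
\Lambda_{\binom{+}{-}} = \binom{-}{+} \sum_{\binom{j \geq 0}{j \leq -1}} \Ad(z)^{-(j+1)}(\mathcal{F} \circ z^j),
\end{align*}
and apply Lemma \ref{le:8}\eqref{for:60} component-wise in a Jordan basis to see that $\Lambda_{\pm}$ are distributions, using the spectral gap from Corollary \ref{cor:1} restricted to the mean-zero subspace of $L^2(X)$ (which sits inside $L^2(\bar X)$ as left-$L$-invariant functions). A direct telescoping computation verifies $\Lambda_{\pm}\circ z - \Ad(z)\Lambda_{\pm} = \mathcal{F}$, and $\Lambda_+ - \Lambda_- = -\sum_{j\in\ZZ}\Ad(z)^{-(j+1)}(\mathcal{F}\circ z^j)$, which vanishes by the hypothesis of \eqref{for:38}. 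Hence $\Lambda_+ = \Lambda_-$ is the common distributional solution. To upgrade it to a smooth solution with tame estimates, I differentiate $\Lambda_+$ along contracting directions of $\Ad(z)$ and $\Lambda_-$ along expanding ones, following the telescoping scheme of Lemma \ref{le:8}\eqref{for:61}, and treat directions transverse to $\mathbf{g}(z)$ exactly as in Theorem \ref{th:6}\eqref{for:33}. The loss of regularity $\sigma_1$ then depends only on $m_0$, $\sigma$, and $\dim \mathfrak{N}_z$.

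For uniqueness \eqref{for:18}, any $\Lambda \in \mathcal{H}_{0,X}^1$ solving $\Lambda\circ z = \Ad(z)\Lambda$ reduces in a Jordan basis of $\Ad(z)\vert_{\mathfrak{N}_z}$ to a triangular system. At the top of each Jordan chain (the honest eigenvector), the component satisfies $\Lambda_i\circ z = \lambda_i \Lambda_i$ with $\lambda_i \in U(1)$, and Fact \ref{fact:1} applied to the restricted regular representation on the mean-zero subspace of $L^2(\bar X)$ (whose spectral gap comes from Corollary \ref{cor:1}) forces $\Lambda_i = 0$. Descending each Jordan chain, the next component satisfies a homogeneous scalar equation as well (its inhomogeneity has already vanished), so the same argument applies inductively and $\Lambda = 0$. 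For \eqref{for:36}, the same vector-valued scheme applies verbatim with $z$ replaced by $b$: the assumption that $\Ad(b)$ is neutral on $\mathfrak{N}_z$ and preserves it gives a Jordan form with $U(1)$ eigenvalues, and $d(z,b) < \delta$ places us in the perturbation regime of Theorem \ref{th:6}\eqref{for:33}, which I invoke for each scalar equation in the Jordan-induced triangular system.

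The main obstacle I anticipate is bookkeeping the polynomial-in-$j$ growth from Jordan nilpotent parts through the tame Sobolev estimates, but this has been explicitly anticipated by the polynomial-sequence hypothesis of Lemma \ref{le:8}\eqref{for:61}, so no new analytic input is required. A secondary technical point is the compatibility of $X = L\backslash \GG/\Gamma$ with $\bar X = \GG/\Gamma$: since $L$ commutes with $A$ and hence with both $z$ and $b$, every relevant operator preserves $L$-invariant subspaces, so the matrix coefficient decay and spectral gap on the mean-zero subspace of $L^2(\bar X)$ restrict without loss to the mean-zero subspace of $L^2(X)$.
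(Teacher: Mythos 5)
Your proposal is correct and follows the paper's overall route: lift from $X$ to $\bar X$ (descending back by left-$L$-invariance), put $\Ad(z)\vert_{\mathfrak{N}_z}$ in Jordan form over the complexification, and reduce everything to the scalar twisted equations of Theorem \ref{th:6}, with the polynomial-in-$j$ growth of the entries of $\Ad(z)^{-(j+1)}$ absorbed by the sequence hypotheses of Lemma \ref{le:8}\eqref{for:60}--\eqref{for:61} — exactly as the paper does for parts \eqref{for:34} and \eqref{for:18}. Where you genuinely diverge is the non-semisimple blocks in part \eqref{for:38}: the paper runs an induction down each Jordan chain, solving $\omega_i\circ z-\lambda\omega_i=\vartheta_i+\omega_{i+1}$ from the bottom up and verifying at each step, via the telescoping identity $\sum_j C_{j,k}(\omega_{k+i}\circ z-\lambda\omega_{k+i})\circ z^j=\sum_j C_{j,k-1}\omega_{k+i}\circ z^j$, that the scalar solvability condition for the $i$-th equation is precisely the $i$-th coordinate of the vector-valued vanishing hypothesis. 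You instead apply Lemma \ref{le:8}\eqref{for:61} directly to each coordinate of the formal solutions $\Lambda_\pm$, whose $j$-th terms have the form $\lambda^{-(j+1)}\pi(z^{\mp j})v_j$ with $v_j$ a polynomially bounded sequence built from the binomial coefficients of the Jordan block; since the coordinate-wise identity $D_v^+=D_v^-$ is literally the coordinate-wise vanishing hypothesis, no induction is needed. This is more streamlined and makes the loss $\sigma_1$ independent of the block size, whereas the paper's chain induction accumulates a loss proportional to the maximal Jordan block and produces the solution block-by-block in a form it reuses verbatim for the perturbed element in part \eqref{for:36}. Your uniqueness argument (Fact \ref{fact:1} propagated down the Jordan chain, starting from the component with no inhomogeneity) is a valid variant of the paper's direct limiting computation against test vectors; both rest on the same matrix-coefficient decay supplied by Corollary \ref{cor:1}. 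The only detail worth adding is the paper's normalization in part \eqref{for:36}: the Jordan-type basis for $\Ad(b)\vert_{\mathfrak{N}_z}$ should consist of unit vectors so that the off-diagonal entries, though no longer $0$ or $1$, remain uniformly bounded for $b$ near $z$, which is what keeps the constants $C_{l,z}$ uniform in $b$.
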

\begin{proof} Lift $\mathcal{F}$ from $X$ to $\bar{X}$, which we denote by $\mathcal{\tilde{F}}$. Then $\mathcal{\tilde{F}}\in\mathcal{H}_0^l$ if $\mathcal{F}\in \mathcal{H}_{0,X}^l$ for any $l$.

\noindent\eqref{for:34}: It is clear that if we can show $\Lambda_{\binom{+}{-}}$ are distributions for $\mathcal{\tilde{F}}$, then $\Lambda_{\binom{+}{-}}$ are also distributions for $\mathcal{F}$. Let $(a_n^{i,j})$ denote the matrix of $\textrm{Ad}(z)^n$ on $\mathfrak N_z$. Since the eigenvalues of $\textrm{Ad}(z)\mid_{\mathfrak N_z}$ are all in $U(1)$, we have
\begin{align}\label{for:20}
 \norm{(a_n^{i,j})}\leq C_z(\abs{n}+1)^{\dim\mathfrak N_z},\qquad \forall n\in\ZZ.
\end{align}
Denote by  $\Lambda_-^i$ and
$\Lambda_+^i$ the $i$-th coordinates of $\Lambda_-$ and $\Lambda_+$ respectively. We have
 \begin{align*}
       \Lambda^i_{\binom{+}{-}}=\binom{-}{+}\sum_{\binom{j\geq0}{j\leq
-1}}\sum_{k=1}^{\dim\mathfrak N_z}a_{-(j+1)}^{i,k} f_k\circ z^j
      \end{align*}
Then the conclusion follows directly from \eqref{for:60} of Lemma \ref{le:8}.

\medskip
\noindent \eqref{for:38}: If we can show the solution of lifted equation \ref{eq:2} on $\bar{X}$ is left-$L$ invariant with appropriate estimates (which is obvious from the expression of $\Lambda_{\binom{+}{-}}$ since $\mathcal{\tilde{F}}$ is left $L$ invariant ) then it descends to a map in $\mathcal{H}_{0,X}$ on $X$, which implies equation \ref{eq:2} has a solution on $X$ with desired estimates.

Let $\mathfrak N_{\mathbb C}$ be   the complexification  of
the subalgebra $\mathfrak N_z$.  There exists a basis in $\mathfrak N_{\mathbb C}$ such that
  in the basis $\textrm{Ad}(z)\mid_{\mathfrak N_z}$ has its Jordan normal form. As usual, this basis may be chosen  to consists
of  several real vectors and several pairs of complex conjugate
vectors. Let $J=(q^{i,j})$ be an $m\times m$ matrix which consists of blocks of $\textrm{Ad}(z)\mid_{\mathfrak N_z}$ corresponding to
the eigenvalue $\lambda$; i.e., let $q^{i,i}=\lambda$ for all $1\leq i\leq m$ and $q^{i,i+1}=0$ for all $1\leq i\leq m-1$ or $q^{i,i+1}=1$ for all $1\leq i\leq m-1$.

For any such block $J$ the equation \eqref{eq:2} has the form:
\begin{align}\label{eq:3}
&\Lambda\circ z-J\Lambda=\Theta
\end{align}
and  solvability condition  splits as
\begin{align}\label{for:161}
\sum_{j=-\infty}^{+\infty}J^{-(j+1)}\Theta\circ z^j=0
\end{align}
in this block.

$(1)$ \emph{The semisimple case}. Assume that $J$ is diagonalizable, i.e., $q^{i,i+1}=0$ for all $1\leq i\leq m-1$. Then equations \eqref{eq:3} and condition \eqref{for:161} split into
finitely many equations of the form
\begin{align}\label{for:175}
 \omega\circ z-\lambda\omega=\varphi
\end{align}
and
\begin{align}\label{eq:7}
\sum_{j=-\infty}^{+\infty}\lambda^{-(j+1)}\varphi\circ z^j=0
\end{align}
where $\varphi$ is a $\mathcal{H}_0^l$ function and $\lambda\in U(1)$ is the corresponding eigenvalue of $\textrm{Ad}(z)\mid_{\mathfrak N_z}$.
Then the conclusion follows directly from Theorem \ref{th:6}.

\medskip
$(2)$ \emph{The non-semisimple case}. Assume that $q^{i,i+1}=1$ for all $1\leq i\leq m-1$. We will show that the formal solutions
\begin{align}\label{for:172}
\Lambda_{\binom{+}{-}}=\binom{-}{+}\sum_{\binom{j\geq0}{j\leq
-1}}J^{-(j+1)}\Theta\circ z^j
\end{align}
are  in fact $\mathcal{H}_0^{l-2r-2}$ solutions.  Let the coordinate functions of $\Theta$ be $\vartheta_i$, $1\leq
i\leq m$. The $m$-th equation of \eqref{eq:3} becomes:
\begin{align}\label{for:32}
\omega_m\circ z-\lambda\omega_m=\vartheta_m
\end{align}
and the condition \eqref{for:161} splits as
\begin{align*}
\sum_{j=-\infty}^{+\infty}\lambda^{-(j+1)}\vartheta_m\circ z^j=0.
\end{align*}
Then the existence of a solution follows from Theorem \ref{th:6}.  Moreover, the estimate:
\begin{align*}
\norm{\omega_m}_{l-\sigma}\leq C_{l}\norm{\vartheta_m}_l\leq
C_{l}\norm{\Theta}_m
\end{align*}
holds.

Now we proceed by induction. Fix $i$ between $1$ and $m-1$ and assume that  for
all $1\leq k\leq m-i$, we have obtained a solution $\omega_{i+k}$ with the appropriate estimate, i.e., for
every $1\leq k\leq m-i$ we have a $\mathcal{H}_0^{l-(m-i-k+1)\sigma}$ function $\omega_{k+i}$ which solves the $k+i$-th
equation:
\begin{align*}
\omega_{k+i}\circ z-\lambda\omega_{k+i}=\vartheta_{k+i}+\omega_{k+i+1}
\end{align*}
and that the following estimates
\begin{align*}
\norm{\omega_{k+i}}_{l-(m-i-k+1)\sigma}\leq C_{l}\norm{\vartheta_{k+i}+\omega_{k+i+1}}_{l-(m-i-k)\sigma}\leq
C_{l}\norm{\Theta}_l
\end{align*}
hold every $1\leq k\leq m-i$.

We wish to find $\omega_i$ that solves the $i$-th equation of \eqref{eq:3}:
\begin{align}\label{for:13}
\omega_{i}\circ z-\lambda\omega_{i}=\vartheta_{i}+\omega_{i+1}
\end{align}
providing $\Lambda^i_+-\Lambda^i_{-}$ is a $0$ distribution.

The $i$-th coordinate function of $J^{-(j+1)}\Theta\circ z^j$ is:
\begin{align*}
\lambda^{-(j+1)}\vartheta_i\circ z^j+\sum_{k=1}^{m-i}C_{j,k}\vartheta_{k+i}\circ z^j,
\end{align*}
where $C_{j,k}=\frac{(-1)^k}{k!}\lambda^{-(j+1+k)}(j+1)\cdots (j+k)$; and  it follows that
\begin{align*}
 \Lambda_-^i-\Lambda_+^i=\sum_{j=-\infty}^{+\infty}\lambda^{-(j+1)}\vartheta_i\circ z^j+\sum_{j=-\infty}^{+\infty}
 \sum_{k=1}^{m-i}C_{j,k}\vartheta_{k+i}\circ z^j.
\end{align*}
Note that both $\Lambda_-^i$ and $\Lambda_+^i$ are distributions by \eqref{for:60} of Lemma \ref{le:8}. By substituting $\vartheta_{k+i}=\omega_{k+i}\circ z-\lambda\omega_{k+i}-\omega_{k+i+1}$ for all $1\leq k\leq m-i$ into $\sum_{k=1}^{m-i}C_{j,k}\vartheta_{k+i}\circ z^j$ we get
\begin{align*}
\sum_{k=1}^{m-i}C_{j,k}\vartheta_{k+i}\circ z^j&=\sum_{k=1}^{m-i-1}C_{j,k}(\omega_{k+i}\circ z-\lambda\omega_{k+i}-\omega_{k+i+1})\circ z^j\\
&+C_{j,m-i}(\omega_m\circ z-\lambda\omega_m)\circ z^j.
\end{align*}
By noting that
\begin{align*}
\sum_{j=-\infty}^{+\infty}C_{j,k}(\omega_{k+i}\circ z-\lambda\omega_{k+i})\circ z^j= \sum_{j=-\infty}^{+\infty}C_{j,k-1}\omega_{k+i}\circ z^j
\end{align*}
it follows that:
\begin{align*}
\Lambda_-^i-\Lambda_+^i=\sum_{j=-\infty}^{+\infty}\lambda^{-(j+1)}\vartheta_i\circ z^j+\sum_{j=-\infty}^{+\infty}\lambda^{-(j+1)}\omega_{i+1}\circ z^j.
\end{align*}
By assumption \eqref{for:161} the left side is a $0$
distribution.  Thus the equation \eqref{for:13} satisfies the solvability
and use Theorem \ref{th:6} again  to conclude that there exists a $\mathcal{H}_0^{l-(m-i-1)\sigma}$ solution $\omega_i$ with estimate
\begin{align*}
\norm{\omega_{i}}_{l-(m-i-1)\sigma}\leq C_{l}\norm{\vartheta_{k+i}+\omega_{k+i+1}}_{l-(m-i-k)\sigma}\leq
C_{l}\norm{\Theta}_l
\end{align*}
Since $k$ is an arbitrary integer between $1$ and $m-1$ it follows that there exists
a solution $\Lambda$ to equation \ref{eq:3} providing that the condition \ref{for:161} is satisfied.  This can be
repeated for all corresponding blocks $\textrm{Ad}(z)\mid_{\mathfrak N_z}$. Since the maximal size of a
Jordan block is bounded by $\dim\mathfrak N_z$, we obtain the following estimates for the solution $\Lambda$:
\begin{align*}
\norm{\Lambda}_{l-\sigma_1}\leq C_{l} \norm{\mathcal{F}}_{l}.
\end{align*}

\medskip

\noindent \eqref{for:18}: It suffices to show that if the lifted equation $\tilde{\Lambda}\circ z-\Ad(z)\tilde{\Lambda}=\tilde{\mathcal{F}}$, if $\tilde{\mathcal{F}}=0$ then $\tilde{\Lambda}=0$. We assume notations in \eqref{for:34}. Since $\tilde{\Lambda}=\Ad(z^{-j})\tilde{\Lambda}(z^j)$ for any $j\in\ZZ$, we get
 \begin{align*}
       \Lambda_i=\sum_{k=1}^{\dim\mathfrak N_z}a_{-j}^{i,k} \Lambda_k\circ z^j,\qquad \forall j\in\ZZ,
      \end{align*}
 where $\Lambda_i$ is the $i$-th coordinates of $\tilde{\Lambda}$.

 By using \eqref{for:20} and estimates obtained in \eqref{for:64} of Lemma \ref{le:8}, for any $u\in \mathcal{H}^1_0$ we get
\begin{align*}
    \langle \Lambda_i,\,u\rangle=\lim_{j\rightarrow\infty}\langle \sum_{k=1}^{\dim\mathfrak N_z}a_{-j}^{i,k} \Lambda_k\circ z^j,\,u\rangle=0
\end{align*}
Then $\Lambda_i=0$ follows immediately for any $i$. This shows that $\Lambda=0$.

\medskip

\noindent \eqref{for:36}: By using arguments in the proof of \eqref{for:38}, we can assume that $\mathcal{F}$ is on $\bar{X}$. If $\textrm{Ad}(b)$ is invariant on $\mathfrak N_z$, there exists a basis in $\mathfrak N_{\mathbb C}$ such that under this basis $\textrm{Ad}(b)$ has Jordan block form and each element in the basis has length $1$. As usual, this basis may be chosen  to consists
of  several real vectors and several pairs of complex conjugate
vectors. Let $J=(q^{i,j})$ be an $m\times m$ matrix which consists of blocks of $\textrm{Ad}(b)$ corresponding to
the eigenvalue $\lambda$; i.e., let $q_1^{i,i}=\lambda$ for all $1\leq i\leq m$; and $q_1^{i,j}=0$ if $j\neq i+1$ for all $1\leq i\leq m-1$. Note that since elements in the basis are of length $1$, then $q^{i,i+1}$ is not necessarily $0$ or $1$. It is clear that the norms of all blocks of $\textrm{Ad}(b)\mid_{\mathfrak N_z}$ are uniformly bounded from above for any $b$ sufficiently close to $z$.

If $b$ is sufficiently close to $z$, $b$ is partially hyperbolic. It follows from \eqref{for:38} and \eqref{for:18} that the equation \eqref{for:17} has a unique solution $\Lambda\in\mathcal{H}^{l-\sigma_1}$. Let the coordinate functions of $\Theta$ and $\Lambda$ be $\vartheta_i$ and $\omega_i$ respectively, $1\leq
i\leq m$. The $m$-th equation of \eqref{eq:3} becomes:
\begin{align}\label{for:26}
\omega_m\circ b-\lambda\omega_m=\vartheta_m
\end{align}
and for $1\leq k\leq m-1$ the $k$-th equation is
\begin{align}\label{for:27}
\omega_{k}\circ b-\lambda\omega_{k}=\vartheta_{k}+q^{k,k+1}\omega_{k+1}
\end{align}
The estimates of $\omega_m$ follow directly from equation \eqref{for:26} and Theorem \ref{th:6}:
\begin{align*}
\norm{\omega_{m}}_{l-\sigma}\leq C_{l}\norm{\vartheta_{m}}_{l-\sigma}\leq
C_{l}\norm{\Theta}_l.
\end{align*}
Inductively, the estimates of $\omega_i$ follow directly from equations \eqref{for:26} and \eqref{for:27} for $k\leq i\leq m$ and Theorem \ref{th:6}:
\begin{align*}
\norm{\omega_{i}}_{l-(m-i-1)\sigma}\leq C_{l}\norm{\vartheta_{i}+q^{i,i+1}\omega_{i+1}}_{l-(m-i)\sigma}\leq
C_{l}\norm{\Theta}_l.
\end{align*}
Repeated the above process for all corresponding blocks $\textrm{Ad}(b)\mid_{\mathfrak N_z}$. We obtain the following estimates for the solution $\Lambda$:
\begin{align*}
\norm{\Lambda}_{l-\sigma_1}\leq C_{l} \norm{\mathcal{F}}_{l}.
\end{align*}

\end{proof}

\section{Proof of Theorem \ref{th:3}}

\subsection{Reduction to finding a solution for a single cocycle equation}

\begin{lemma}\label{le:11}
Suppose $\alpha_A$ is a higher rank partially hyperbolic action on $X$ and $\mathfrak{N}$ is the neutral distributions of $\alpha_A$ on $\bar{X}$. If $R$ is a $\mathcal{H}^1$ \text{Ad}-twisted cocycle over $\alpha_A$ valued on $\mathfrak{N}$ and $\Omega\in \mathcal{H}_{0,X}^1$ solves the equation
\begin{align}\label{eq:5}
    \Omega\circ z-\Ad(z)\Omega=R_z+c
\end{align}
for some $e\neq z\in A$ and $c\in\mathcal{N}$ (Here we use $R_z:=R(z,\cdot)$ to denote the map from $X$ to $\mathfrak{N}$).  Then $\Omega$ solves \eqref{eq:5} for all the elements of the cocycle
i.e. there exists a homomorphism $\mathfrak{c}:A\rightarrow \mathfrak{N}$ such that for all $d\in A$ we have
\begin{align*}
    \Omega\circ d-\Ad_{d}\Omega=R_d+\mathfrak{c}(d).
\end{align*}
\end{lemma}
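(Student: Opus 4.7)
The plan is to show that, for each $d\in A$, the $\mathfrak{N}$-valued function
\[
\Psi_d:=\Omega\circ d-\Ad(d)\Omega-R_d
\]
is in fact a constant $\mathfrak{c}(d)\in\mathfrak{N}$. Once this is established, the cocycle identity for $R$ transfers to $\mathfrak{c}$. First I would compute $\Psi_d\circ z-\Ad(z)\Psi_d$. Using $dz=zd$ we have $(\Omega\circ d)\circ z=(\Omega\circ z)\circ d=(\Ad(z)\Omega+R_z+c)\circ d=\Ad(z)(\Omega\circ d)+R_z\circ d+c$, and expanding $\Ad(d)(\Omega\circ z)=\Ad(zd)\Omega+\Ad(d)R_z+\Ad(d)c$. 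Collecting terms, the $\Omega$-pieces cancel and one gets
\[
\Psi_d\circ z-\Ad(z)\Psi_d=\bigl(R_z\circ d+\Ad(z)R_d\bigr)-\bigl(R_d\circ z+\Ad(d)R_z\bigr)+c-\Ad(d)c.
\]
By the $\Ad$-twisted cocycle identity applied to $zd$ and $dz$, the two parenthesised expressions are both equal to $R_{zd}=R_{dz}$, so they cancel and we are left with the constant $c-\Ad(d)c\in\mathfrak{N}$.

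Next I would extract the candidate for $\mathfrak{c}(d)$. Since $\Omega\in\mathcal{H}_{0,X}^1$ has mean zero and $\mu$ is $\alpha_A$-invariant, $\int_X\Psi_d\,d\mu=-\int_X R_d\,d\mu$; set $\mathfrak{c}(d):=-\int_X R_d\,d\mu\in\mathfrak{N}$. Integrating the given equation $\Omega\circ z-\Ad(z)\Omega=R_z+c$ shows that $c=-\int_X R_z\,d\mu=\mathfrak{c}(z)$, and integrating the cocycle identity $R_{zd}=R_d\circ z+\Ad(d)R_z=R_z\circ d+\Ad(z)R_d$ gives
\[
(I-\Ad(z))\mathfrak{c}(d)=(I-\Ad(d))\mathfrak{c}(z)=c-\Ad(d)c,
\]
so the zero-mean reduction $\tilde\Psi_d:=\Psi_d-\mathfrak{c}(d)\in\mathcal{H}_{0,X}^1$ solves the homogeneous twisted equation $\tilde\Psi_d\circ z-\Ad(z)\tilde\Psi_d=0$ with values in $\mathfrak{N}\subseteq\mathfrak{N}_z$.

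Now I would invoke the uniqueness statement in Lemma \ref{le:16}\eqref{for:18} (applicable because $z$, being a nontrivial element used in equation \eqref{eq:5} for a higher rank partially hyperbolic action, may be taken partially hyperbolic) to conclude $\tilde\Psi_d=0$, i.e.\ $\Psi_d=\mathfrak{c}(d)$. Finally, integrating the cocycle identity $R_{d_1d_2}=R_{d_1}\circ d_2+\Ad(d_1)R_{d_2}$ yields $\mathfrak{c}(d_1d_2)=\mathfrak{c}(d_1)+\Ad(d_1)\mathfrak{c}(d_2)$, which is the (constant twisted) cocycle/homomorphism property of $\mathfrak{c}:A\to\mathfrak{N}$ required by the statement.

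The argument is essentially algebraic once one has Lemma \ref{le:16}\eqref{for:18} at hand; the only delicate point is the tight matching between the identity $\Psi_d\circ z-\Ad(z)\Psi_d=c-\Ad(d)c$ and the integrated cocycle relation $(I-\Ad(z))\int R_d=(I-\Ad(d))\int R_z$, which is exactly what is needed so that subtracting the mean kills the forcing term and the uniqueness result can be applied. That is where I expect the main (but mild) obstacle to lie: keeping track of the conventions and verifying that $\tilde\Psi_d$ really takes values in a space on which the cited uniqueness applies.
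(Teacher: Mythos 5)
Your proposal is correct and follows essentially the same route as the paper: both reduce the problem to showing that the defect $\Omega\circ d-\Ad(d)\Omega-R_d$ (normalized to have zero mean) solves the homogeneous twisted equation over $z$ — via commutativity of $d$ and $z$ together with the twisted cocycle identity — and then conclude by the uniqueness statement of Lemma \ref{le:16}\eqref{for:18}. The paper phrases this more compactly with the commuting operators $\mathcal{T}_d,\mathcal{T}_z$ applied to the mean-zero cocycle $R'_d=R_d-\int_X R_d\,d\mu$, whereas you track the constants explicitly; the bookkeeping is equivalent.
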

\begin{proof} From \eqref{eq:5} we see that $c=-\int_{X}R_zd\mu$ (note that $\Omega\in \mathcal{H}_{0,X}^1$). Set $R_d'=R_d-\int_{X}R_d d\mu$ for any $d\in A$. Then $R'\in \mathcal{H}_{0,X}^1$. It is easy to check that $R'$ is also an \text{Ad}-twisted cocycle over $\alpha_A$. By the twisted cocycle condition
\begin{align*}
    R_d'\circ z-\textrm{Ad}(z)R_d'=R_z'\circ d-\textrm{Ad}(d)R_z
\end{align*}
we have
\begin{align*}
\mathcal{T}_{d}R_z'=\mathcal{T}_{z}R_d',\qquad \forall d\in A,
\end{align*}
where $\mathcal{T}_{d}f=f\circ d-\textrm{Ad}(d)f$ for any $f\in \mathcal{H}_X$.

Substituting \eqref{eq:5} into the above equation we have
\begin{align*}
\mathcal{T}_{d}(\mathcal{T}_{z}\Omega)=\mathcal{T}_{z}R_d',\qquad \forall d\in A.
\end{align*}
Since operators $\mathcal{T}_{d}$ and $\mathcal{T}_{z}$ commute this implies
\begin{align*}
\mathcal{T}_{z}(\mathcal{T}_{d}\Omega-R_d')=0.
\end{align*}
By \eqref{for:18} of Lemma \ref{le:16} it follows that $\mathcal{T}_{d}\Omega-R_d'$, i.e., $\Omega$ solves
\begin{align*}
 \Omega\circ d-\Ad(d)\Omega=R_d+\mathfrak{c}(d)
\end{align*}
as well.
\end{proof}
\begin{remark}
The result still holds if we change $R$ to a cocycle over $\alpha_A$, since we just need to show that the operator $\mathcal{D}_{d}f=f\circ d-f$ is injective, which is obvious from Fact \ref{fact:1}.
\end{remark}
The first part of Theorem \ref{th:3} follows directly from Theorem \ref{th:9} and the above remark. Next, we prove the second part. Lemma \ref{le:11} shows that obtaining a tame solution of cocycle $R$ for one regular
generator suffices for the proofs of Theorem \ref{th:3}.
Hence to prove Theorem \ref{th:3}, it is equivalent to prove the following lemma:
\begin{lemma}\label{le:1}
Suppose $m_1$ and $\sigma_1$ are as defined in Lemma \ref{le:16}. Also suppose $a_1$ and $b_1$ commute and are regular generators for $\alpha_A$ on $X$.
If $\mathcal{F}\,,\mathcal{G}:X\rightarrow
\mathfrak{N}$ satisfying
\begin{align}\label{for:28}
  \mathcal{F}\circ b_1-\emph{Ad}(b_1)\mathcal{F}=\mathcal{G}\circ a_1-\emph{Ad}(a_1)\mathcal{G}
\end{align}
and $\mathcal{F},\,\mathcal{G}\in\mathcal{H}_{0,X}^m$, $m\geq m_1$, then the equations
\begin{align}\label{eq:6}
&\Omega\circ a_1-\emph{Ad}(a_1)\Omega=\mathcal{F}\notag\\
&\Omega\circ b_1-\emph{Ad}(b_1)\Omega=\mathcal{G}
\end{align}
have a common  solution $\Omega\in\mathcal{H}_{0,X}^{m-\sigma_1}$ with the
following estimate
\begin{align}\label{for:49}
\norm{\Omega}_{m-\sigma_1}\leq C_{m}
\norm{\mathcal{F},\mathcal{G}}_{m}.
\end{align}

\end{lemma}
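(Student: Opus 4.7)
The plan is to combine the vector-to-scalar reduction of Lemma \ref{le:16} with the ``higher rank trick'' from the proof of Theorem \ref{th:9}. First I would lift $\mathcal{F}$ and $\mathcal{G}$ from $X$ to $\bar X$; as in the proof of Lemma \ref{le:16}\eqref{for:38}, any solution we construct via telescoping sums will automatically inherit left $L$--invariance, so it descends to $X$ with the same estimates. Since $a_1$ and $b_1$ commute and act with unit modulus eigenvalues on $\mathfrak{N}$, the operators $\textrm{Ad}(a_1)$ and $\textrm{Ad}(b_1)$ are simultaneously block upper-triangularizable on $\mathfrak{N}_{\mathbb{C}}$. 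Choose a basis adapted to this joint decomposition: in each common generalized eigenspace, $\textrm{Ad}(a_1) = \lambda_1 I + N_1$ and $\textrm{Ad}(b_1) = \lambda_2 I + N_2$ with $\lambda_1,\lambda_2 \in U(1)$ and $N_1,N_2$ commuting nilpotent matrices.

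Within a single such block, writing the coordinate functions of $\mathcal{F},\mathcal{G},\Omega$ as $\vartheta_i,\psi_i,\omega_i$, the cocycle equation \eqref{for:28} and the target system \eqref{eq:6} split as a triangular system, decoupled in the deepest entry and progressively coupled as one climbs the Jordan chain. For the deepest coordinate $\omega_m$, the system reduces to the scalar pair
\begin{align*}
\omega_m\circ a_1-\lambda_1\omega_m=\vartheta_m,\qquad \omega_m\circ b_1-\lambda_2\omega_m=\psi_m,
\end{align*}
with $\vartheta_m,\psi_m$ satisfying the scalar twisted cocycle relation. Here I would apply Theorem \ref{th:9} directly: its ``higher rank trick'' supplies the vanishing of the obstruction $\sum_j\lambda_1^{-(j+1)}\pi(a_1^j)\vartheta_m=0$, and then parts \eqref{for:33} and the uniqueness built into Theorem \ref{th:9} produce a common scalar solution $\omega_m\in\mathcal H_{0,X}^{m-\sigma}$ with the tame bound $\|\omega_m\|_{m-\sigma}\leq C_m\|\mathcal{F},\mathcal{G}\|_m$.

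Next I would induct upward. Assume $\omega_{i+1},\dots,\omega_m$ have been constructed with the requisite Sobolev bounds. The $i$-th pair of equations takes the form
\begin{align*}
\omega_i\circ a_1-\lambda_1\omega_i &= \vartheta_i + (\text{linear combination of }\omega_{i+1},\dots,\omega_m), \\
\omega_i\circ b_1-\lambda_2\omega_i &= \psi_i + (\text{linear combination of }\omega_{i+1},\dots,\omega_m),
\end{align*}
with right-hand sides in $\mathcal H_{0,X}^{m-k\sigma}$ for some $k$ controlled by the Jordan block size. The key verification, mirroring the identity derived at the end of Lemma \ref{le:16}\eqref{for:38}, is that the two new right-hand sides still satisfy a scalar twisted cocycle relation over $a_1,b_1$; this is forced by the commutation of $\textrm{Ad}(a_1)$ with $\textrm{Ad}(b_1)$, the cocycle identity \eqref{for:28} for the original data, and the equations already solved for $\omega_{i+1},\dots,\omega_m$. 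Given this, a further invocation of Theorem \ref{th:9} yields $\omega_i$ with a uniform $\sigma$-loss at that level.

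The total loss of regularity accumulated is at most $\sigma$ times the maximal Jordan block size, which is bounded by $\dim\mathfrak{N}$, plus the one-shot loss $\sigma$ from each application of Theorem \ref{th:9}; absorbing these into a single constant gives the $\sigma_1$ of Lemma \ref{le:16} and yields \eqref{for:49}. The step I expect to be the main obstacle is the inductive verification that the perturbed right-hand sides at level $i$ continue to satisfy the scalar twisted cocycle relation needed to apply Theorem \ref{th:9}; this requires a careful bookkeeping of how the nilpotent parts $N_1,N_2$ interact with the already-chosen $\omega_{i+1},\dots,\omega_m$, and is the place where the commuting hypothesis $a_1b_1=b_1a_1$ enters decisively.
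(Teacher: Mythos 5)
Your plan is correct, but it takes a genuinely different route from the paper. The paper never simultaneously triangularizes $\mathrm{Ad}(a_1)$ and $\mathrm{Ad}(b_1)$: it runs the higher rank trick once, at the vector level, showing directly that $\sum_{j}\mathrm{Ad}(a_1)^{-(j+1)}\mathcal{F}\circ a_1^j$ is the zero distribution (this uses the polynomial bound on the matrix entries of $\mathrm{Ad}(a_1)^{n}\mathrm{Ad}(b_1)^{j}$ on $\mathcal{N}$ together with Lemma \ref{le:9}\eqref{for:45} to see that the iterated double sum is a distribution), and then feeds this vanishing into the single-element result Lemma \ref{le:16}\eqref{for:38}, which performs the Jordan reduction with respect to $a_1$ alone and invokes only Theorem \ref{th:6}; that the resulting $\Omega$ also solves the $b_1$-equation comes from the uniqueness statement Lemma \ref{le:16}\eqref{for:18} via Lemma \ref{le:11}. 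You instead decompose $\mathfrak{N}_{\mathbb{C}}$ into common generalized eigenspaces, write $\mathrm{Ad}(a_1)=\lambda_1 I+N_1$, $\mathrm{Ad}(b_1)=\lambda_2 I+N_2$ with $N_1,N_2$ commuting strictly upper-triangular nilpotents, and apply the two-element scalar result Theorem \ref{th:9} at each level of the chain. The inductive compatibility you flag as the main obstacle does go through: writing $T_a f=f\circ a_1-\lambda_1 f$ and $T_b f=f\circ b_1-\lambda_2 f$, the $i$-th coordinate of \eqref{for:28} gives $T_b\vartheta_i-T_a\psi_i=(N_2\mathcal{F})_i-(N_1\mathcal{G})_i$, while the already-solved equations give $T_b(N_1\omega)_i=(N_1\mathcal{G})_i+(N_1N_2\omega)_i$ and $T_a(N_2\omega)_i=(N_2\mathcal{F})_i+(N_2N_1\omega)_i$, so the defect cancels precisely because $N_1N_2=N_2N_1$. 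What your route buys is that the common-solution property and the tame estimate come out in one pass from Theorem \ref{th:9}, with no separate uniqueness argument; what the paper's route buys is that it needs only the Jordan structure of one element and reuses the already-proved vector-valued machinery (Lemmas \ref{le:9}, \ref{le:11}, \ref{le:16}) essentially verbatim. Do make explicit in a final write-up that the regularity available at level $i$ is $m$ minus a multiple of $\sigma$ bounded by the block size, so $m_1$ must be taken at least $m_0+(\dim\mathfrak{N}+1)\sigma$ for Theorem \ref{th:9} to remain applicable throughout the induction.
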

\begin{proof}
By arguments in the proof of \eqref{for:38} of Lemma \ref{le:16}, we can assume $\mathcal{F}\,,\mathcal{G}:\bar{X}\rightarrow
\mathfrak{N}$. Next, we show that the obstructions to solving equations \ref{eq:6} vanish. From the equation \eqref{for:28} we get
\begin{align*}
&\sum_{j=-n}^{j=n}\Ad(a_1)^{-(j+1)}\mathcal{F}\circ (b_1a_1^j)-\sum_{j=-n}^{j=n}\Ad(b_1)\Ad(a_1)^{-(j+1)}\mathcal{F}\circ a_1^j\\
&=\Ad(a_1)^{-(n+1)}\mathcal{G}\circ a_1^{n+1}-\Ad(a_1)^{n}\mathcal{G}\circ a_1^{-n}.
\end{align*}
From proof of \eqref{for:34} of Lemma \ref{le:16} and \eqref{for:60} of Lemma \ref{le:8} we see that  the right-hand converges to  $0$  as a distribution (in $\pi$) when
$n\rightarrow \infty$. Hence  we see that
\begin{align}\label{for:39}
\Ad(b_1)^{-1}\sum_{j=-\infty
}^{+\infty}\Ad(a_1)^{-(j+1)}\mathcal{F}\circ (b_1a_1^j)=\sum_{j=-\infty
}^{+\infty}\Ad(a_1)^{-(j+1)}\mathcal{F}\circ a_1^j
\end{align}
as distributions. This shows that
\begin{align*}
\Ad(b_1)^{-k}\sum_{j=-\infty
}^{+\infty}\Ad(a_1)^{-(j+1)}\mathcal{F}\circ (b_1^ka_1^j)=\sum_{j=-\infty
}^{+\infty}\Ad(a_1)^{-(j+1)}\mathcal{F}\circ a_1^j
\end{align*}
as distributions for any $k\in\ZZ$.

If we can show that
\begin{align}\label{for:29}
  \sum_{n=-\infty
}^{+\infty}\sum_{j=-\infty
}^{+\infty}\Ad(b_1)^{-(n+1)}\Ad(a_1)^{-(j+1)}\mathcal{F}\circ(b_1^na_1^j)
\end{align}
is a distribution, then by iterating equation \eqref{for:39}
we obtain:
\begin{align*}
&\sum_{n=-\infty
}^{+\infty}\sum_{j=-\infty
}^{+\infty}\Ad(b_1)^{-(n+1)}\Ad(a_1)^{-(j+1)}\mathcal{F}\circ(b_1^na_1^j)\\
&=\sum_{n=-\infty
}^{+\infty}\Ad(b_1)^{-1}\sum_{j=-\infty
}^{+\infty}\Ad(b_1)^{-n}\Ad(a_1)^{-(j+1)}\mathcal{F}\circ (b_1^na_1^j)\notag\\
&=
\sum_{n=-\infty
}^{+\infty}\Ad(b_1)^{-1}\sum_{j=-\infty
}^{+\infty}\Ad(a_1)^{-(j+1)}\mathcal{F}\circ a_1^j
\end{align*}
Since the series in the left hand side of above equation is a distribution by \eqref{for:34} of Lemma \ref{le:16}, it forces
$\sum_{j=-\infty
}^{+\infty}\Ad(a_1)^{-(j+1)}\mathcal{F}\circ a_1^j$ to be a $0$
distribution. Then the result follows from Lemma \ref{le:16}.

Let $(a_{n,j}^{k,l})$  denote the matrix of $\Ad(a_1)^{n}\Ad(b_1)^{j}$ on $\mathcal{N}$. Since the eigenvalues of $\Ad(a_1)$ and $\Ad(b_1)$ on $\mathcal{N}$ are all in $U(1)$, we have
\begin{align}\label{for:42}
 \norm{(a_{n,j}^{k,l})}\leq C(\abs{n}+1)^{\dim\mathcal{N}}(\abs{j}+1)^{\dim\mathcal{N}},\qquad \forall n,\,j\in\ZZ.
\end{align}
The $i$-th coordinate of \eqref{for:29} is:
\begin{align*}
 \sum_{j\in\ZZ} \sum_{i\in\ZZ}\sum_{k=1}^{\dim\mathcal{N}}a_{-(j+1),-(n+1)}^{i,k} \mathcal{F}_k\circ(b_1^na_1^j)
\end{align*}
where $\mathcal{F}_k$ is the $i$-th coordinate of $\mathcal{F}$.

By using \eqref{for:42} it follows from \eqref{for:40} of Lemma \ref{le:9} that \eqref{for:29} is a distribution.
\end{proof}

\end{document}